\newcommand{\C}{{\mathbb C}}
\newcommand{\N}{{\mathbb N}}
\newcommand{\R}{{\mathbb R}}
\newcommand{\abs}[2][\empty]{\ifx#1\empty\left|#2\right|%
\else#1\vert #2 #1\vert\fi}
\newcommand{\caninf}{\rho}
\newcommand{\Cnt}[1][]{{\cal C}^{#1}}
\newcommand{\comp}{\circ}
\newcommand{\conv}{\star}
\newcommand{\csub}{\subset\subset}
\newcommand{\defstyle}[1]{{\em #1}}
\newcommand{\eps}{\varepsilon}
\renewcommand{\Im}{\mathop{\mathrm{Im}}}
\renewcommand{\implies}{\Rightarrow}
\newcommand{\interior}[1]{{#1}^\circ}
\newcommand{\interl}{\mathop{\mathrm{interl}}}
\newcommand{\inv}[1]{{#1}^{-1}}
\newcommand{\meas}{\mu}
\newcommand{\norm}[2][\empty]{\ifx#1\empty\left\Vert#2\right\Vert%
\else#1\Vert #2 #1\Vert\fi}
\renewcommand{\Re}{\mathop{\mathrm{Re}}}
\newcommand{\restr}[2]{{{#1}_{|{#2}}}}
\newcommand{\sharpnorm}[2][\empty]{\abs[#1]{#2}_{\mathrm e}}
\newcommand{\supp}{\mathop{\mathrm{supp}}}
\newcommand{\val}{\mathrm{v}}
\newcommand{\Gen}{{\mathcal G}}
\newcommand{\GenC}{\widetilde\C}
\newcommand{\GenR}{\widetilde\R}
\newcommand{\EMod}{{{\mathcal E}_M}}
\newcommand{\Mod}{{\mathcal M}}
\newcommand{\Null}{{\mathcal N}}
\newcommand{\tGen}{\widetilde\Gen}
\newtheorem{thm}{Theorem}[section]
\newtheorem{lemma}[thm]{Lemma}
\newtheorem{prop}[thm]{Proposition}
\newtheorem{df}[thm]{Definition}
\newtheorem{cor}[thm]{Corollary}
\newtheorem{ex}[thm]{Example}
\theoremstyle{definition}
\newtheorem*{rem}{Remark}
\begin{document}
\title{Generalized Analytic Functions on Generalized Domains}
\author{Hans Vernaeve\footnote{Supported by FWF (Austria), grants M949-N18 and  Y237-N13.}\\Dept.\ of Pure Mathematics and Computer Algebra\\Ghent University\\
{\tt hvernaev@cage.ugent.be}}
\date{}
\maketitle

\begin{abstract}
We define the algebra $\tGen(A)$ of Colombeau generalized functions on a subset $A$ of the space of generalized points $\GenR^d$. If $A$ is an open subset of $\GenR^d$, such generalized functions can be identified with pointwise maps from $A$ into the ring of generalized numbers $\GenC$. We study analyticity in $\tGen(A)$, where $A$ is an open subset of $\GenC$. In particular, if the domain is an open ball for the sharp norm on $\GenC$, we characterize analyticity and give a unicity theorem involving the values at generalized points.
\end{abstract}

\emph{Key words}: algebras of generalized functions.

\emph{2000 Mathematics subject classification}: 46F30.

\section{Introduction}
From the very beginning of the theory of nonlinear generalized functions, holomorphic generalized functions have been studied \cite{Aragona85,Colombeau84,Colombeau85}. More recently, analyticity of pointwise maps $A\subseteq \GenC\to\GenC$ ($A$ open) has been considered \cite{AFJ05,PSV06} in relation with holomorphic generalized functions on an open domain $\Omega\subseteq\C$ (which can be considered as pointwise maps on the set $\widetilde\Omega_c$ of so-called compactly supported generalized points in $\Omega$ \cite[\S1.2.4]{GKOS}). Recently, a theory of integration of generalized functions on generalized subsets (called membranes) has been developed and a generalized Cauchy formula has been proved \cite{AFJO08}. Very soon in the development of the theory, also some striking differences with the classical theory have been noticed. For instance, neither the values of the derivatives of any order of a generalized holomorphic function $f$ at one point, nor an accumulation point of values of $f$ determine $f$ uniquely \cite[\S 8.7]{Colombeau84}. Nevertheless, strong unicity theorems for holomorphic generalized functions have been obtained in \cite{KS06}.

We define the algebra $\tGen(A)$ of generalized functions on a subset $A\subseteq\GenR^d$ in such a way that the traditional Colombeau algebra $\Gen(\Omega)$ on an open subset $\Omega\subseteq\R^d$ coincides with $\tGen(\widetilde \Omega_c)$ (Corollary \ref{cor_Gen_Omega_is_tGen_Omega_c}), and the pointwise actions as a map $\widetilde\Omega_c\to\GenC$ are identical (proposition \ref{prop_GenA_pointvalues}). After establishing some properties of $\tGen(A)$ that extend known results about the traditional Colombeau algebras, such as an analogue of the sheaf property (propostion \ref{prop_sheaf}) and a pointwise invertibility criterium (proposition \ref{prop_invertible} and its corollary), we focus our attention to analyticity in $\tGen(A)$, where $A\subseteq\GenC$ is open. We use a result on complex integration over generalized paths similar to \cite{AFJO08} (proposition \ref{prop_Cauchy_formula}), though our definition of generalized path is slightly different (definition \ref{df_path}). For generalized holomorphic functions on a domain of the form $\{\tilde z\in\GenC: \sharpnorm{\tilde z-\tilde z_0}< r\}$, with $z_0\in\GenC$, $r\in\R^+$ (an open ball for the sharp norm $\sharpnorm{.}$ on $\GenC$), the unicity theorem that is lacking for traditional generalized functions on open domains in $\C$ holds (proposition \ref{thm_acc_point_of_zeroes}). The phenomenon for traditional holomorphic generalized functions on an open domain $\Omega$ in $\C$ can thus be interpreted as a result of the fact that the largest part of $\widetilde\Omega_c$ lies on the `boundary of the convergence disc'. We further characterize analyticity in an open ball for the sharp norm in four different ways (theorem \ref{thm_holomorphic_charac}). Generalized domains are also a natural setting to obtain an analogue of Liouville's theorem (proposition \ref{prop_Liouville}). Apart from developing a tool for modeling singular nonlinear phenomena, our motivation for considering (in particular holomorphic) generalized functions on generalized domains is also to obtain a spectral radius formula in the theory of Banach $\GenC$-algebras \cite{HV_Banach}.

\section{Preliminaries}
Let $E$ be a locally convex vector space over $\C$ with its topology generated by a family of seminorms $(p_i)_{i\in I}$. Then the Colombeau space $\Gen_E:= \Mod_E/\Null_E$ \cite{Garetto2005}, where
\begin{align*}
\Mod_E &= \{(u_\eps)_\eps\in\ E^{(0,1)}: (\forall i\in I) (\exists N\in\N) (p_i(u_\eps)\le \eps^{-N}, \text{ for small }\eps)\}\\
\Null_E &= \{(u_\eps)_\eps\in\ E^{(0,1)}: (\forall i\in I) (\forall m\in\N) (p_i(u_\eps)\le \eps^{m}, \text{ for small }\eps)\}.
\end{align*}
Elements of $\Mod_{E}$ are called moderate, elements of $\Null_{E}$ negligible.
The element of $\Gen_E$ with $(u_\eps)_\eps$ as a representative is denoted by $[(u_\eps)_\eps]$. If $\Omega\subseteq\R^d$ is open and $\Cnt[\infty](\Omega)$ is provided with its usual locally convex topology, i.e., generated by the seminorms $p_{m,K}(u) := \sup_{\abs\alpha\le m, x\in K} \abs{\partial^\alpha u(x)}$ ($m\in\N$, $K\csub\Omega$), then $\Gen(\Omega):=\Gen_{\Cnt[\infty](\Omega)}$ is the so-called (special) algebra of Colombeau generalized functions (cf.\ \cite[\S 1.2]{GKOS}). $\GenR:=\Gen_{\R}$ and $\GenC:=\Gen_\C$ are the so-called Colombeau generalized numbers. We will denote $\caninf:= [(\eps)_\eps]\in\GenR$. \\
For $(x_\eps)_\eps\in(\R^d)^{(0,1)}$, the valuation $\val(x_\eps):= \sup\{b\in\R: \abs{x_\eps}\le \eps^b$, for small $\eps \}$ and the so-called sharp norm $\sharpnorm{x_\eps}:= e^{- \val(x_\eps)}$. For $\tilde x=[(x_\eps)_\eps]\in\GenR^d$, $\val(\tilde x):= \val(x_\eps)\in (-\infty, \infty]$ and $\sharpnorm{\tilde x}:= \sharpnorm{x_\eps}\in [0,+\infty)$ are defined independent of the representative of $\tilde x$. Thus $\GenR^d$ becomes a metric space for the ultrametric $d(\tilde x_1, \tilde x_2):= \sharpnorm{\tilde x_1 - \tilde x_2}$. The corresponding topology is called the sharp topology on $\GenR^d$ \cite{B90,Garetto2005,S92}. Similarly, the sharp topology on $\GenC$ is defined. For $\tilde x=[(x_\eps)_\eps]\in\GenR^d$, we will denote $\abs{\tilde x}:= [(\abs{x_\eps})_\eps]\in\GenR$ (and similarly for $\tilde z\in\GenC$).\\
Let $A_\eps\subseteq \GenR^d$, $\forall \eps\in (0,1)$. Then the set
\[
[(A_\eps)_\eps] := \{\tilde x\in \GenR^d: (\exists \text{ repres.\ $(x_\eps)_\eps$ of } \tilde x) (x_\eps\in A_\eps, \text{ for small }\eps)\}
\]
is called the \defstyle{internal subset} of $\GenR^d$ with representative $(A_\eps)_\eps$ \cite{OV_internal} (and similarly for subsets of $\GenC$). For $A\subseteq \R^d$, we denote $\widetilde A:=[(A)_\eps]$.\\
A subset $A$ of $\GenR^d$ is called sharply bounded if $\sup_{\tilde x\in A}\sharpnorm{\tilde x}<+\infty$. An internal set $A$ is sharply bounded iff $A$ has a sharply bounded representative, i.e., a representative $[(A_\eps)_\eps]$ for which there exists $M\in\N$ such that $\sup_{x\in A_\eps}\abs{x}\le \eps^{-M}$, for small $\eps$ \cite[Lemma 2.4]{OV_internal}.\\
If $\Omega\subseteq\R^d$ is open, then $\widetilde \Omega_c:= \bigcup_{K\csub \Omega}\widetilde K\subseteq \GenR^d$ is the set of compactly supported points in $\Omega$. For $u = [(u_\eps)_\eps]\in\Gen(\Omega)$ and $\tilde x=[(x_\eps)_\eps]\in\widetilde\Omega_c$, the generalized point value $u(\tilde x):=[(u_\eps(x_\eps))_\eps]$ is well-defined (independent of representatives of $u$ and $\tilde x$) \cite[\S 1.2]{GKOS}.

We refer to \cite{GKOS} for further properties of Colombeau generalized functions.

\section{The Colombeau algebra on a subset of $\GenR^d$}
\begin{df}
Let $\emptyset\ne A\subseteq \GenR^d$. We define $\tGen(A)=\EMod(A)/\Null(A)$, where
\begin{align*}
\EMod(A)=\,&\big\{(u_\eps)_\eps\in\Cnt[\infty](\R^d)^{(0,1)}: (\forall\alpha\in\N^d) \big(\forall [(x_\eps)_\eps]\in A\big)
\big((\partial^\alpha u_\eps(x_\eps))_\eps\in\Mod_{\C}\big)\big\},\\
\Null(A)=\,&\big\{(u_\eps)_\eps\in\Cnt[\infty](\R^d)^{(0,1)}: (\forall\alpha\in\N^d) \big(\forall [(x_\eps)_\eps]\in A\big)
\big((\partial^\alpha u_\eps(x_\eps))_\eps\in\Null_{\C}\big)\big\}.
\end{align*}
(Here $\forall [(x_\eps)_\eps]\in A$ means: for each representative $(x_\eps)_\eps$ of an element of $A$.)
\end{df}
Since $\EMod(A)$ is a differential algebra (for the $\eps$-wise operations) and $\Null(A)$ is a differential ideal of $\EMod(A)$, $\tGen(A)$ is a differential algebra.
\begin{df}
Let $\emptyset \ne A\subseteq B\subseteq\GenR^d$. Then the identity map on representatives gives rise to a well-defined map $\restr{.}{A}$: $\tGen(B)\to\tGen(A)$ which we call the \defstyle{restriction} map.
\end{df}

\begin{lemma}\label{lemma_moderate_uniform}
Let $(A_n)_{n\in\N}$ be a decreasing sequence of non-empty, internal, sharply bounded subsets of $\GenR^d$. Let $(u_\eps)_{\eps\in(0,1)}$ be a net of maps $\R^d\to\C$. Then for any sharply bounded representatives $(A_{n,\eps})_\eps$ of $A_n$,
\begin{multline*}
(u_\eps(x_\eps))_\eps\in\Mod_\C, \forall [(x_\eps)_\eps]\in \bigcap_{n\in\N} A_n\iff (\exists N\in\N) \big(\sup_{x\in A_{N,\eps} + \eps^N}\abs{u_\eps(x)}\le\eps^{-N},\text{ for small }\eps\big).
\end{multline*}
\end{lemma}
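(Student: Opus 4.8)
Here is the plan. The statement is an equivalence, and I would prove the two implications separately; the implication ``$\Leftarrow$'' is routine and ``$\Rightarrow$'' carries the content.

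\emph{The direction $\Leftarrow$.} Suppose $N\in\N$ satisfies $\sup_{x\in A_{N,\eps}+\eps^N}\abs{u_\eps(x)}\le\eps^{-N}$ for small $\eps$, and let $(x_\eps)_\eps$ be any representative of a point $\tilde x\in\bigcap_{n}A_n$. Since $\tilde x\in A_N=[(A_{N,\eps})_\eps]$, there is a representative $(x'_\eps)_\eps$ of $\tilde x$ with $x'_\eps\in A_{N,\eps}$ for small $\eps$. As $(x_\eps-x'_\eps)_\eps\in\Null_{\R^d}$, we have $\abs{x_\eps-x'_\eps}\le\eps^{N}$, hence $x_\eps\in A_{N,\eps}+\eps^N$, for small $\eps$; therefore $\abs{u_\eps(x_\eps)}\le\eps^{-N}$ for small $\eps$, i.e.\ $(u_\eps(x_\eps))_\eps\in\Mod_\C$.

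\emph{A preliminary estimate.} For the converse I would first record, by a routine extraction argument, the following: if $B\subseteq C$ are non-empty internal sharply bounded subsets of $\GenR^d$ with sharply bounded representatives $(B_\eps)_\eps$, $(C_\eps)_\eps$, then $\eps\mapsto\sup_{x\in B_\eps}\mathrm{dist}(x,C_\eps)$ is negligible. (Otherwise one picks $x_{\eps_j}\in B_{\eps_j}$ with $\mathrm{dist}(x_{\eps_j},C_{\eps_j})$ not negligibly small along some $\eps_j\downarrow0$, completes to a moderate net inside $(B_\eps)_\eps$, and uses $B\subseteq C$ to produce a representative of the resulting generalized point lying eventually in $(C_\eps)_\eps$; negligibility of the difference of two representatives then contradicts the choice of the $x_{\eps_j}$.) Applying this with $C=A_1$ gives, for each fixed $n$, a bound $\sup_{x\in A_{n,\eps}}\abs{x}\le 2\eps^{-M}$ for small $\eps$, where $M$ is fixed with $\sup_{x\in A_{1,\eps}}\abs{x}\le\eps^{-M}$ for small $\eps$ (the ``small $\eps$'' threshold depending on $n$); applying it with $B=A_k$, $C=A_n$ for $k\ge n$ shows that $\delta_{k,n}(\eps):=\sup_{x\in A_{k,\eps}}\mathrm{dist}(x,A_{n,\eps})$ is negligible, i.e.\ $A_{k,\eps}\subseteq A_{n,\eps}+\delta_{k,n}(\eps)$ with $\delta_{k,n}$ super-polynomially small. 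These estimates are the substitute for ``nestedness of the representatives'' that makes the construction below go through.

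\emph{The direction $\Rightarrow$, and the main obstacle.} I would argue by contraposition. Assume that for every $N\in\N$ the set $S_N:=\{\eps\in(0,1):\sup_{x\in A_{N,\eps}+\eps^N}\abs{u_\eps(x)}>\eps^{-N}\}$ accumulates at $0$. Recursively choose $\eps_1>\eps_2>\cdots\to0$ with $\eps_k\in S_k$, $\eps_k<1/k$, and $\eps_k$ small enough that, for all $\eps\le\eps_k$, the finitely many estimates $\sup_{x\in A_{k,\eps}}\abs{x}\le 2\eps^{-M}$ and $\delta_{k,n}(\eps)\le\eps^{k}$ (for $n=1,\dots,k$) hold; then pick $b_k\in A_{k,\eps_k}+\eps_k^k$ with $\abs{u_{\eps_k}(b_k)}>\eps_k^{-k}$. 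Define the net by $x_\eps:=b_k$ if $\eps=\eps_k$, $x_\eps:=$ an arbitrary point of $A_{k,\eps}$ if $\eps\in(\eps_{k+1},\eps_k)$ (which exists for small $\eps$ since $A_k\ne\emptyset$), and $x_\eps:=0$ for $\eps\ge\eps_1$. Then $(x_\eps)_\eps$ is moderate (from $\abs{x_\eps}\le 2\eps^{-M}+1$ for small $\eps$); for each fixed $n$, combining $b_k\in A_{k,\eps_k}+\eps_k^k$ with $A_{k,\eps}\subseteq A_{n,\eps}+\delta_{k,n}(\eps)$ and $\delta_{k,n}(\eps)\le\eps^k$ shows that $\eps\mapsto\mathrm{dist}(x_\eps,A_{n,\eps})$ is negligible, so replacing $x_\eps$ by a nearby point of $A_{n,\eps}$ exhibits $\tilde x:=[(x_\eps)_\eps]$ as a point of $A_n$; hence $\tilde x\in\bigcap_n A_n$. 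Finally $\abs{u_{\eps_k}(x_{\eps_k})}=\abs{u_{\eps_k}(b_k)}>\eps_k^{-k}$ with $\eps_k\downarrow0$, so $(u_\eps(x_\eps))_\eps\notin\Mod_\C$, and the left-hand side fails. The genuinely delicate point is verifying $\tilde x\in\bigcap_n A_n$: the ``bad'' points $b_k$ are sampled from the single slice $A_{k,\eps_k}$ and, the given representatives not being nested, need not a priori sit close to $A_{n,\eps_k}$ for $n<k$; feeding the (finitely many, $\eps$-dependent) thresholds from the preliminary estimate into the recursive choice of $\eps_k$ is exactly what forces $\mathrm{dist}(x_\eps,A_{n,\eps})$ to be negligible simultaneously for all $n$.
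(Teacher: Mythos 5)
Your argument is correct and follows essentially the same route as the paper: contraposition, a recursive choice of a decreasing sequence $\eps_k$ along which the sup bound fails, a net filled in by arbitrary points of $A_{k,\eps}$ for $\eps$ between consecutive $\eps_k$'s, and verification that the resulting point lies in $\bigcap_n A_n$ via a uniform distance estimate between the representatives $(A_{k,\eps})_\eps$ and $(A_{n,\eps})_\eps$. The only cosmetic difference is that the paper simply cites \cite[Prop.~2.9]{OV_internal} for that uniform estimate (yielding the thresholds $\eta_m$), whereas you reprove it from scratch by the same extraction argument and absorb the thresholds into the recursive choice of $\eps_k$.
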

\begin{proof}
$\Rightarrow$: By \cite[Prop.\ 2.9]{OV_internal}, for each $m\in\N$, there exists $\eta_m\in (0,1)$ such that for each $\eps\le\eta_m$ and $x\in A_{m,\eps}$, $d(x, A_{k,\eps})\le\eps^m$, for each $k\le m$. W.l.o.g., $(\eta_m)_{m\in\N}$ decreasingly tends to $0$. By contraposition, let
\[(\forall n\in\N)
(\forall\eta\in(0,1)) (\exists\eps\le\eta)
\big(\sup_{x\in A_{n,\eps} + \eps^n}\abs{u_\eps(x)}>\eps^{-n}\big).\]
Then we can find a strictly decreasing sequence $(\eps_n)_{n\in\N}$ and $x_{\eps_n}\in A_{n,\eps_n} + \eps_n^n$ such that $\eps_n\le\eta_n$ and $\abs{u_{\eps_n}(x_{\eps_n})}>\eps_n^{-n}$, $\forall n\in\N$. 
Choose $x_\eps \in A_{m,\eps}$, if $\eta_{m+1}<\eps\le \eta_m$ and $\eps\notin\{\eps_n: n\in\N\}$. Then for each $n\in\N$, $(d(x_\eps, A_{n,\eps}))_\eps\in\Null_\R$. By \cite{OV_internal}, $\tilde x:=[(x_\eps)_\eps]\in \bigcap_{n\in\N} A_n$ ($(x_\eps)_\eps$ is moderate, since $(A_{n,\eps})_\eps$ are sharply bounded). Yet $(u_\eps(x_\eps))_\eps\notin\Mod_\C$.\\
$\Leftarrow$: let $[(x_\eps)_\eps]\in \bigcap_{n\in\N} A_n$. Let $N\in\N$ as in the statement. By \cite{OV_internal}, $(d(x_\eps, A_{n,\eps}))_\eps\in\Null_\R$ for each $n\in\N$. In particular, $x_\eps\in A_{N,\eps} + \eps^N$ for small $\eps$. Hence $\abs{u_\eps(x_\eps)}\le \sup_{x\in A_{N,\eps} + \eps^N}\abs{u_\eps(x)} \le\eps^{-N}$ for small $\eps$.
\end{proof}

\begin{prop}\label{prop_EModA_monad}
Let $(A_n)_{n\in\N}$ be a decreasing sequence of non-empty, internal, sharply bounded subsets of $\GenR^d$. Then for any sharply bounded representatives $(A_{n,\eps})_\eps$ of $A_n$,
\begin{multline*}
\EMod\big(\smash{\bigcap_{n\in\N}} A_n\big) = \big\{(u_\eps)_\eps\in\Cnt[\infty](\R^d)^{(0,1)}: (\forall\alpha\in\N^d) (\exists N\in\N)\\
\big(\sup_{x\in A_{N,\eps} + \eps^N}\abs{\partial^\alpha u_\eps(x)}\le\eps^{-N},\text{ for small }\eps\big)\big\}.
\end{multline*}
\end{prop}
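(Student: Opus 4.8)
The plan is to reduce the statement to a derivative-by-derivative application of Lemma \ref{lemma_moderate_uniform}. Before starting I would record two bookkeeping facts so that both sides of the asserted identity make sense: first, that sharply bounded representatives $(A_{n,\eps})_\eps$ of the $A_n$ exist, which is immediate from the quoted characterization that an internal set is sharply bounded iff it admits a sharply bounded representative; and second, that $\bigcap_{n\in\N}A_n$ is non-empty, so that $\EMod(\bigcap_n A_n)$ is defined. The latter follows either from the relevant saturation property of decreasing sequences of non-empty internal sets in \cite{OV_internal}, or simply from the observation that the construction carried out in the ``$\Rightarrow$'' direction of the proof of Lemma \ref{lemma_moderate_uniform} already exhibits an element of the intersection.

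The substantive step is then short. By the definition of $\EMod$, a net $(u_\eps)_\eps\in\Cnt[\infty](\R^d)^{(0,1)}$ belongs to $\EMod(\bigcap_n A_n)$ precisely when, for every $\alpha\in\N^d$ and every representative $(x_\eps)_\eps$ of a point of $\bigcap_n A_n$, the net $(\partial^\alpha u_\eps(x_\eps))_\eps$ is moderate. Fix $\alpha\in\N^d$ and apply Lemma \ref{lemma_moderate_uniform} with $(\partial^\alpha u_\eps)_\eps$ playing the role of the net $(u_\eps)_\eps$ in that lemma — legitimate, since $(\partial^\alpha u_\eps)_\eps$ is again a net of maps $\R^d\to\C$ and the same sharply bounded representatives $(A_{n,\eps})_\eps$ are used. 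This yields, for that fixed $\alpha$, the equivalence between the statement that $(\partial^\alpha u_\eps(x_\eps))_\eps\in\Mod_\C$ for all $[(x_\eps)_\eps]\in\bigcap_n A_n$ and the statement that there is $N\in\N$ with $\sup_{x\in A_{N,\eps}+\eps^N}\abs{\partial^\alpha u_\eps(x)}\le\eps^{-N}$ for small $\eps$.

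To finish, I would take the conjunction over all $\alpha\in\N^d$: membership in $\EMod(\bigcap_n A_n)$ is the universally quantified (over $\alpha$) left-hand condition, and the right-hand side of the claimed identity is the universally quantified (over $\alpha$) right-hand condition, so the per-$\alpha$ equivalence of the previous step gives the desired set equality. I do not expect a genuine obstacle: essentially all the work sits in Lemma \ref{lemma_moderate_uniform}. The only points that need care are the two bookkeeping remarks above and handling the outer quantifier over $\alpha$ cleanly — in particular not interchanging ``$\forall\alpha$'' with the existential ``$\exists N$'', which would change the meaning of the right-hand set.
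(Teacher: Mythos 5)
Your argument is correct and is exactly the paper's proof, which consists of the single phrase ``By lemma \ref{lemma_moderate_uniform}'': one applies that lemma to each net $(\partial^\alpha u_\eps)_\eps$ and takes the conjunction over $\alpha$, keeping the $\exists N$ inside the $\forall\alpha$ as you note. The bookkeeping remarks about non-emptiness of $\bigcap_n A_n$ and the existence of sharply bounded representatives are sound and harmless, though the paper leaves them implicit.
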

\begin{proof}
By lemma \ref{lemma_moderate_uniform}.
\end{proof}
\begin{cor}\label{cor_EModA_internal}
Let $\emptyset\ne A\subseteq\GenR^d$ be internal and sharply bounded. Then for each sharply bounded representative $(A_\eps)_\eps$ of $A$,
\begin{multline*}
\EMod(A)=\big\{(u_\eps)_\eps\in\Cnt[\infty](\R^d)^{(0,1)}: (\forall\alpha\in\N^d) (\exists N\in\N)\\
\big(\sup_{x\in A_\eps + \eps^N}\abs{\partial^\alpha u_\eps(x)}\le\eps^{-N},\text{ for small }\eps\big)\big\}.
\end{multline*}
\end{cor}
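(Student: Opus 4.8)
The plan is to derive Corollary~\ref{cor_EModA_internal} from Proposition~\ref{prop_EModA_monad} by taking the constant sequence $A_n := A$ for all $n\in\N$. Since $A$ is non-empty, internal and sharply bounded, this is a (trivially) decreasing sequence of non-empty, internal, sharply bounded subsets of $\GenR^d$, and $\bigcap_{n\in\N} A_n = A$. Given a sharply bounded representative $(A_\eps)_\eps$ of $A$, the constant sequence $(A_\eps)_\eps$ is then a sharply bounded representative of each $A_n$, so Proposition~\ref{prop_EModA_monad} applies directly and yields
\[
\EMod(A) = \big\{(u_\eps)_\eps\in\Cnt[\infty](\R^d)^{(0,1)}: (\forall\alpha\in\N^d) (\exists N\in\N)\big(\sup_{x\in A_\eps + \eps^N}\abs{\partial^\alpha u_\eps(x)}\le\eps^{-N},\text{ for small }\eps\big)\big\},
\]
which is exactly the claimed description.

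There is essentially nothing to prove beyond observing that the hypotheses of Proposition~\ref{prop_EModA_monad} are met and that the constant sequence has the stated intersection; the proof in the paper reading ``By Proposition~\ref{prop_EModA_monad}'' (with $A_n := A$) is complete as written. The only point requiring a word of care is that Proposition~\ref{prop_EModA_monad} is stated for an \emph{arbitrary} choice of sharply bounded representatives $(A_{n,\eps})_\eps$ of the sets $A_n$; here we must check that the single sharply bounded representative $(A_\eps)_\eps$ of $A$, used simultaneously for every $A_n$, is an admissible such choice — which it is, since each $A_n$ equals $A$. So the characterization holds for each fixed sharply bounded representative $(A_\eps)_\eps$, as asserted.

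I do not expect any genuine obstacle: the content of the corollary is entirely contained in Proposition~\ref{prop_EModA_monad}, and the only ``work'' is the bookkeeping identification $\bigcap_n A_n = A$ and the reuse of one representative for all $A_n$. If one wanted to be fully self-contained without invoking the proposition, one could instead apply Lemma~\ref{lemma_moderate_uniform} directly with $A_n := A$ to each derivative $(\partial^\alpha u_\eps)_\eps$, but this merely unfolds the same argument.
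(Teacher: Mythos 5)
Your proof is correct and matches the paper's intended argument exactly: the corollary carries no separate proof in the paper precisely because it is the special case of Proposition~\ref{prop_EModA_monad} with the constant sequence $A_n = A$, as you observe. Nothing more is needed.
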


\begin{prop}\label{prop_GenA_pointvalues}
Let $\emptyset\ne A\subseteq\GenR^d$. Let $u=[(u_\eps)_\eps]\in\tGen(A)$.
\begin{enumerate}
\item For $\tilde x=[(x_\eps)_\eps]\in A$, $u(\tilde x) = [(u_\eps(x_\eps))_\eps]\in \GenC$ is well-defined (independent of representatives).
\item If $\tilde x\in \interior{A}$ and $u(\tilde y)=0$, for each $\tilde y$ in a sharp neighbourhood of $\tilde x$, then $\partial^\alpha u(\tilde x)=0$, $\forall\alpha\in\N^d$.
\item If $A$ is open, then $u=0$ iff $u(\tilde x) = 0$, $\forall \tilde x\in A$.
\end{enumerate}
\end{prop}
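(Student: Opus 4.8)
I will establish the three parts in order: (1) ensures that $\partial^\alpha u(\tilde x)$ is meaningful, and (3) will then follow almost formally from (1) and (2). For part (1), moderateness of $(u_\eps(x_\eps))_\eps$ and independence of the chosen representative of $u$ are read off directly from the definitions of $\EMod(A)$ and $\Null(A)$ taken with $\alpha=0$. The only real point is independence of the representative of $\tilde x$: given two representatives $(x_\eps)_\eps$, $(y_\eps)_\eps$ of $\tilde x$ with $(x_\eps-y_\eps)_\eps\in\Null_\C$, I would write $u_\eps(x_\eps)-u_\eps(y_\eps)=\int_0^1\nabla u_\eps(y_\eps+t(x_\eps-y_\eps))\cdot(x_\eps-y_\eps)\,dt$ and bound the right-hand side by $\abs{x_\eps-y_\eps}\cdot\abs{\nabla u_\eps(\xi_\eps)}$, where $\xi_\eps$ is a point of the segment $[x_\eps,y_\eps]$ at which the continuous map $t\mapsto\abs{\nabla u_\eps(y_\eps+t(x_\eps-y_\eps))}$ attains its maximum over that (compact) segment. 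The crucial remark is that $(\xi_\eps)_\eps$ is again a representative of $\tilde x$: it is moderate, being a convex combination of $x_\eps$ and $y_\eps$, and it differs from $x_\eps$ by at most $\abs{x_\eps-y_\eps}$, hence negligibly. Therefore $(\partial_j u_\eps(\xi_\eps))_\eps\in\Mod_\C$ for each $j$ by the definition of $\EMod(A)$, so $\abs{\nabla u_\eps(\xi_\eps)}\le\eps^{-N}$ for small $\eps$ and a fixed $N$; combined with the negligibility of $(x_\eps-y_\eps)_\eps$ this yields $(u_\eps(x_\eps)-u_\eps(y_\eps))_\eps\in\Null_\C$.

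Part (2) carries the weight. After intersecting the given sharp neighbourhood of $\tilde x$ with a sharp ball contained in $A$ (one exists since $\tilde x\in\interior A$ and such balls form a neighbourhood basis of the sharp topology), I may assume there is a real $r>0$ with $B:=\{\tilde y:\sharpnorm{\tilde y-\tilde x}<r\}\subseteq A$ and $u=0$ on $B$. I then prove by induction on $\abs\alpha$ the stronger statement that $\partial^\alpha u(\tilde x')=0$ for \emph{every} $\tilde x'\in B$; part (1), applied to $\partial^\alpha u\in\tGen(A)$, makes this independent of representatives, so it suffices to verify one representative per point. The base case ($\abs\alpha=0$) is the reduced hypothesis. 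For the step, write $\alpha=\beta+e_j$, fix $\tilde x'\in B$ with a representative $(x'_\eps)_\eps$, and Taylor-expand in the $j$-th coordinate with a \emph{fixed}, non-negligible increment $\eps^a$:
\[
\partial^\beta u_\eps(x'_\eps+\eps^a e_j)=\partial^\beta u_\eps(x'_\eps)+\eps^a\,\partial^\alpha u_\eps(x'_\eps)+R_\eps,\qquad \abs{R_\eps}\le\tfrac12\eps^{2a}\sup_{s\in[0,\eps^a]}\abs{\partial^{\beta+2e_j}u_\eps(x'_\eps+se_j)}.
\]
The decisive observation is that the ultrametric inequality keeps everything inside $B$: provided $a>\log(1/r)$, both $[(x'_\eps+\eps^a e_j)_\eps]$ and the point at which the supremum above is attained are at sharp distance $<r$ from $\tilde x$. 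Hence the two $\partial^\beta u_\eps$-terms are negligible by the induction hypothesis (through part (1)), while the $\partial^{\beta+2e_j}u_\eps$-factor in $R_\eps$ is moderate, $\le\eps^{-N}$ for small $\eps$, by the definition of $\EMod(A)$. Solving for $\partial^\alpha u_\eps(x'_\eps)$ and then choosing $a$ large relative to $N$ and to the desired negligibility order gives $(\partial^\alpha u_\eps(x'_\eps))_\eps\in\Null_\C$; specialising to $\tilde x'=\tilde x$ completes (2).

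For (3), the forward implication is immediate from the definition of $\Null(A)$ with $\alpha=0$. Conversely, if $A$ is open and $u(\tilde x)=0$ for all $\tilde x\in A$, then each $\tilde x\in A$ lies in $\interior A$ and $u$ vanishes on a sharp neighbourhood of it, so (2) gives $\partial^\alpha u(\tilde x)=0$ for all $\alpha\in\N^d$ and all $\tilde x\in A$; by (1) this is precisely the statement $(u_\eps)_\eps\in\Null(A)$, i.e.\ $u=0$. The step I expect to be the genuine obstacle is the induction in (2): the right move is to differentiate ``at a generalized point'' by perturbing with a quantity that is fixed but small in sharp norm (such as $\eps^a$), not by a negligible quantity, and it is precisely the ultrametric character of the sharp metric that guarantees the perturbed and intermediate points never leave $B$, so that both the induction hypothesis and the moderateness encoded in $\EMod(A)$ remain available; the remaining interplay between the increment $\eps^a$, the moderateness exponent $N$, and the target order is routine bookkeeping.
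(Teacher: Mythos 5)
Your proof is correct in substance, and the route is close in spirit to the paper's: part~(2) is exactly the interpolation argument that the paper delegates to \cite[Thm.~1.2.3]{GKOS}, carried out by hand, and part~(1) is the same mean-value-theorem estimate. The main structural difference is that you avoid the paper's Corollary~\ref{cor_EModA_internal}. The paper applies that corollary to the singleton $\{\tilde x\}$ (which is internal and sharply bounded) to extract a \emph{uniform} moderateness bound of the form $\sup_{\abs{x-x_\eps}\le\eps^N}\abs{\partial^\alpha u_\eps(x)}\le\eps^{-N}$, whereas you obtain a pointwise bound by observing that the net of maximizers is itself a representative of a point of $A$. That device is legitimate, but it hides a small circularity in your inductive step. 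In the remainder estimate
\[
\abs{R_\eps}\le\tfrac12\,\eps^{2a}\sup_{s\in[0,\eps^a]}\abs{\partial^{\beta+2e_j}u_\eps(x'_\eps+se_j)},
\]
the generalized point furnished by the maximizer net depends on $a$, so the moderateness exponent $N$ you read off from $\EMod(A)$ is \emph{a priori} a function of $a$ --- yet you then want to ``choose $a$ large relative to $N$''. This is repairable precisely because the intervals $[0,\eps^a]$ shrink as $a$ grows: fix once some $a_0>\ln(1/r)$, take $N$ to bound the supremum over $[0,\eps^{a_0}]$, and observe that for every $a\ge a_0$ the supremum over $[0,\eps^a]$ is dominated by the one over $[0,\eps^{a_0}]$, so the \emph{same} $N$ serves for all such $a$. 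With this one fix your bookkeeping closes, and parts~(1) and (3) are fine as written. Appealing to Corollary~\ref{cor_EModA_internal} as the paper does yields a scale-independent $N$ from the outset, which is exactly why the paper reaches for it in both~(1) and~(2).
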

\begin{proof}
(1) To prove independence of the representative of $\tilde x$, let $\tilde x =[(x_\eps)_\eps]=[(y_\eps)_\eps]$. By corollary \ref{cor_EModA_internal}, since $\{\tilde x\}$ is internal and sharply bounded, there exists $N\in\N$ such that $\sup_{\abs{x-x_\eps}\le \eps^N}\abs{\nabla u_\eps(x)} \le\eps^{-N}$, for small $\eps$. Hence there exist $y'_\eps\in\R^d$ with $\abs{y'_\eps - x_\eps}\le\abs{y_\eps-x_\eps}$ such that \[\abs{u_\eps(y_\eps)-u_\eps(x_\eps)}\le \abs{y_\eps - x_\eps} \abs{\nabla u_\eps(y'_\eps)}\le\eps^{-N}\abs{y_\eps - x_\eps},\]
for small $\eps$.\\
(2) Let $N\in\N$ such that $\tilde y\in A$ and $u(\tilde y)=0$, for each $\tilde y\in\GenR$ with $\abs{\tilde y - \tilde x}\le \eps^N$. Let $\tilde x=[(x_\eps)_\eps]$. By contraposition, $(\sup_{\abs{x-x_\eps}\le\eps^N}\abs{u_\eps(x)})_\eps\in\Null_\C$. Since $(u_\eps)_\eps\in\EMod(A)$, we find as in part~1 for each $\alpha\in\N^d$ some $N\in\N$ such that $\sup_{\abs{x-x_\eps}\le \eps^N}\abs{\partial^\alpha u_\eps(x)} \le\eps^{-N}$, for small $\eps$. The statement follows analogously to \cite[Thm.~1.2.3]{GKOS}.\\
(3) $\subseteq$: clear.\\
$\supseteq$: let $\alpha\in\N^d$ and $\tilde x\in A$. By part~2, $\partial^\alpha u(\tilde x)=0$. By part~1, $(\partial^\alpha u_\eps(x_\eps))_\eps\in\Null_\C$ for any representative $[(x_\eps)_\eps]$ of $\tilde x$.
\end{proof}

\begin{prop}\label{prop_NullA_internal}
Let $\emptyset\ne A\subseteq\GenR^d$ be internal and sharply bounded. Then for each sharply bounded representative $(A_\eps)_\eps$ of $A$,
\begin{align*}
\Null(A)=&\,\big\{(u_\eps)_\eps\in\Cnt[\infty](\R^d)^{(0,1)}: (\forall\alpha\in\N^d) (\forall m\in\N)\\
&\,(\exists N\in\N)
\big(\sup_{x\in A_\eps+\eps^N}\abs{\partial^\alpha u_\eps(x)}\le\eps^m, \text{ for small }\eps\big)\big\}\\
=&\,\big\{(u_\eps)_\eps\in\EMod(A): (\forall\alpha\in\N^d)
\big(\big(\sup_{x\in A_\eps}\abs{\partial^\alpha u_\eps(x)}\big)_\eps \in\Null_\R \big)\big\}.
\end{align*}
\end{prop}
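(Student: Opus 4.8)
The plan is to prove the two equalities separately: first that $\Null(A)$ equals the second set $P$ in the statement, then that $P$ equals the third set $Q$. Throughout, fix the sharply bounded representative $(A_\eps)_\eps$ and $M\in\N$ with $\sup_{x\in A_\eps}\abs{x}\le\eps^{-M}$ for small $\eps$. The argument rests on the two facts from \cite{OV_internal} already used in Lemma \ref{lemma_moderate_uniform}: if $[(x_\eps)_\eps]\in A$ then $(d(x_\eps,A_\eps))_\eps\in\Null_\R$, and conversely a moderate net $(x_\eps)_\eps$ with $(d(x_\eps,A_\eps))_\eps\in\Null_\R$ represents an element of $A$.

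For $\Null(A)\subseteq P$ I would repeat, essentially verbatim, the contraposition in the proof of Lemma \ref{lemma_moderate_uniform}, with the bound ``$\abs{\partial^\alpha u_\eps}>\eps^m$'' in place of ``$\abs{u_\eps}>\eps^{-n}$''. If for some $\alpha\in\N^d$ and $m\in\N$ no $N$ as in $P$ exists, then for every $n$ there are arbitrarily small $\eps$ with $\sup_{x\in A_\eps+\eps^n}\abs{\partial^\alpha u_\eps(x)}>\eps^m$; choose a strictly decreasing null sequence $(\eps_n)_n$ realising this together with points $x_{\eps_n}\in A_{\eps_n}+\eps_n^n$ satisfying $\abs{\partial^\alpha u_{\eps_n}(x_{\eps_n})}>\eps_n^m$, complete to a net by taking $x_\eps\in A_\eps$ off the sequence (possible for small $\eps$), check that $(x_\eps)_\eps$ is moderate (sharp boundedness of $(A_\eps)_\eps$) and $(d(x_\eps,A_\eps))_\eps\in\Null_\R$, hence $[(x_\eps)_\eps]\in A$; then $(\partial^\alpha u_\eps(x_\eps))_\eps\notin\Null_\C$ since $\abs{\partial^\alpha u_{\eps_n}(x_{\eps_n})}>\eps_n^m$ and $\eps_n\to0$, contradicting $(u_\eps)_\eps\in\Null(A)$. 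The reverse inclusion $P\subseteq\Null(A)$ is immediate: given $[(x_\eps)_\eps]\in A$, $\alpha$ and $m$, pick $N$ as in $P$; since $(d(x_\eps,A_\eps))_\eps\in\Null_\R$ we have $x_\eps\in A_\eps+\eps^N$ for small $\eps$, whence $\abs{\partial^\alpha u_\eps(x_\eps)}\le\sup_{x\in A_\eps+\eps^N}\abs{\partial^\alpha u_\eps(x)}\le\eps^m$ for small $\eps$, i.e.\ $(\partial^\alpha u_\eps(x_\eps))_\eps\in\Null_\C$.

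For $P\subseteq Q$: taking $m=0$ in the defining property of $P$ and invoking Corollary \ref{cor_EModA_internal} shows $(u_\eps)_\eps\in\EMod(A)$, while $A_\eps\subseteq A_\eps+\eps^N$ turns the same property into $\big(\sup_{x\in A_\eps}\abs{\partial^\alpha u_\eps(x)}\big)_\eps\in\Null_\R$ for every $\alpha$. The only genuine computation is $Q\subseteq P$. Fix $\alpha$ and $m$; since $(u_\eps)_\eps\in\EMod(A)$, Corollary \ref{cor_EModA_internal} applied to the derivatives $\partial_i\partial^\alpha u_\eps$ gives $N_1\in\N$ with $\sup_{x\in A_\eps+\eps^{N_1}}\abs{\partial_i\partial^\alpha u_\eps(x)}\le\eps^{-N_1}$ for small $\eps$ and all $i\le d$. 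Put $N:=m+N_1+1$. For $x\in A_\eps+\eps^N$ choose $a\in A_\eps$ with $\abs{x-a}<2\eps^N$; for small $\eps$ the segment $[a,x]$ then lies in $A_\eps+\eps^{N_1}$, so a mean value estimate (as in the proof of Proposition \ref{prop_GenA_pointvalues}(1)) bounds $\abs{\partial^\alpha u_\eps(x)-\partial^\alpha u_\eps(a)}$ by a constant times $\eps^N\eps^{-N_1}=\eps^{m+1}$, while $\abs{\partial^\alpha u_\eps(a)}\le\sup_{z\in A_\eps}\abs{\partial^\alpha u_\eps(z)}\le\eps^{m+1}$ for small $\eps$ by the $\Null_\R$-condition in $Q$. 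Hence $\sup_{x\in A_\eps+\eps^N}\abs{\partial^\alpha u_\eps(x)}\le\eps^m$ for small $\eps$, which is the condition in $P$.

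Given Lemma \ref{lemma_moderate_uniform} and Corollary \ref{cor_EModA_internal}, the whole thing is fairly routine; the one place needing care is the bookkeeping in $Q\subseteq P$, where $N$ must be chosen large enough relative both to $m$ and to the moderateness exponent $N_1$ of the one-order-higher derivatives, and the three ``for small $\eps$'' thresholds entering the final estimate must be made simultaneous. The contraposition in $\Null(A)\subseteq P$ likewise requires the small verification that the constructed net is moderate and at negligible distance from $(A_\eps)_\eps$, but that is handled exactly as in Lemma \ref{lemma_moderate_uniform}.
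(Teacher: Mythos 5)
Your proof is correct, but you close the cycle of inclusions differently than the paper does. Writing $P$ and $Q$ for the second and third sets in the statement (as you do), the paper shows $\Null(A)\subseteq P\subseteq Q\subseteq\Null(A)$, three inclusions in all; for the last one it picks a representative $(a_\eps)_\eps$ of $\tilde x\in A$ with $a_\eps\in A_\eps$, uses the $\Null_\R$-condition in $Q$ to get $(\partial^\alpha u_\eps(a_\eps))_\eps\in\Null_\C$, and then invokes Proposition \ref{prop_GenA_pointvalues}(1) to conclude that the same holds at every representative of $\tilde x$. You instead prove $\Null(A)=P$ and $P=Q$ as two independent double inclusions; the only genuinely new step, $Q\subseteq P$, re-derives by hand the mean-value estimate that the paper has already packaged into the well-definedness of point values in Proposition \ref{prop_GenA_pointvalues}(1). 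The underlying tools are the same (the distance-to-$A_\eps$ facts from \cite{OV_internal}, moderateness of one-order-higher derivatives from Corollary \ref{cor_EModA_internal}, the mean value inequality), so this is largely a matter of organisation: the paper's cyclic chain is shorter because it reuses \ref{prop_GenA_pointvalues}(1), while your version is more explicit and avoids the dependence on that proposition.
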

\begin{proof}
(1)$\subseteq$(2): by contraposition (as in proposition \ref{prop_EModA_monad}).\\
(2)$\subseteq$(3): clear by corollary \ref{cor_EModA_internal}.\\
(3)$\subseteq$(1): let $\alpha\in\N^d$ and $\tilde x\in A$. Then $\tilde x=[(a_\eps)_\eps]$, with $a_\eps\in A_\eps$ for small $\eps$. By hypothesis, $(\partial^\alpha u_\eps(a_\eps))_\eps\in\Null_\C$. By proposition \ref{prop_GenA_pointvalues}(1), $\partial^\alpha u(\tilde x)=0$ and $(\partial^\alpha u_\eps(x_\eps))_\eps\in\Null_\C$ for any representative $[(x_\eps)_\eps]$ of $\tilde x$.
\end{proof}

\begin{lemma}
Let $\emptyset\ne B_\lambda\subseteq\GenR^d$, for each $\lambda\in\Lambda$ (where $\Lambda$ is some index set). Then $\EMod(\bigcup_{\lambda\in\Lambda} B_\lambda) = \bigcap_{\lambda\in\Lambda}\EMod(B_\lambda)$ and $\Null(\bigcup_{\lambda\in\Lambda} B_\lambda) = \bigcap_{\lambda\in\Lambda}\Null(B_\lambda)$.
\end{lemma}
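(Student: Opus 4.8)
The plan is to simply unwind the definitions, the point being that the defining condition of $\EMod(A)$ (respectively $\Null(A)$) is a universal statement ranging over all multi-indices $\alpha\in\N^d$ and over all representatives $(x_\eps)_\eps$ of elements of $A$, so that replacing $A$ by a union only produces a harmless rearrangement of quantifiers. First I would record the elementary observation that a net $(x_\eps)_\eps\in(\R^d)^{(0,1)}$ is a representative of some element of $\bigcup_{\lambda\in\Lambda}B_\lambda$ if and only if there exists $\lambda\in\Lambda$ for which $(x_\eps)_\eps$ is a representative of an element of $B_\lambda$; this is immediate from $\tilde x\in\bigcup_{\lambda}B_\lambda\iff(\exists\lambda)(\tilde x\in B_\lambda)$ together with the fact that being a representative of $\tilde x$ depends only on $\tilde x$.

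Next, for $\EMod$: a net $(u_\eps)_\eps\in\Cnt[\infty](\R^d)^{(0,1)}$ lies in $\EMod(\bigcup_\lambda B_\lambda)$ iff for every $\alpha\in\N^d$ and every representative $(x_\eps)_\eps$ of an element of $\bigcup_\lambda B_\lambda$ one has $(\partial^\alpha u_\eps(x_\eps))_\eps\in\Mod_\C$; by the observation above this is equivalent to: for every $\alpha\in\N^d$, every $\lambda\in\Lambda$ and every representative $(x_\eps)_\eps$ of an element of $B_\lambda$, $(\partial^\alpha u_\eps(x_\eps))_\eps\in\Mod_\C$. Since the quantifiers $\forall\alpha$ and $\forall\lambda$ commute, this in turn holds iff for every $\lambda\in\Lambda$ one has $(u_\eps)_\eps\in\EMod(B_\lambda)$, i.e.\ $(u_\eps)_\eps\in\bigcap_\lambda\EMod(B_\lambda)$. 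The argument for $\Null$ is word for word the same, with $\Mod_\C$ replaced by $\Null_\C$ and the extra quantifier $\forall m\in\N$ carried along unchanged throughout.

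I expect essentially no obstacle here: no use is made of the internal-set machinery, of sharp boundedness, or of any of the preceding lemmas, and the entire content is the quantifier rearrangement just described. The only point requiring care is the convention (stated right after the definition of $\tGen(A)$) that ``$\forall[(x_\eps)_\eps]\in A$'' quantifies over all representatives of all points of $A$; this is exactly what turns the union into an intersection. The hypothesis $B_\lambda\ne\emptyset$ plays no real role beyond keeping each $\EMod(B_\lambda)$, each $\Null(B_\lambda)$ and the union $\bigcup_\lambda B_\lambda$ within the scope of the definition.
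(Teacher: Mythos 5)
Your argument is correct and is exactly what the paper means by its one-line proof ``By definition'': the identity is a quantifier rearrangement once one notes that $(x_\eps)_\eps$ represents a point of $\bigcup_\lambda B_\lambda$ iff it represents a point of some $B_\lambda$.
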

\begin{proof}
By definition.
\end{proof}

\begin{cor}\label{cor_GenA_union_of_internal}
Let $A=\bigcup_{\lambda\in\Lambda} B_\lambda\subseteq\GenR^d$, where each $B_\lambda$ is nonempty, internal and sharply bounded. Let $(B_{\lambda,\eps})_\eps$ be a sharply bounded representative of $B_\lambda$, for each $\lambda$. Then
\begin{align*}
\EMod(A)=\,&\big\{(u_\eps)_\eps\in\Cnt[\infty](\R^d)^{(0,1)}: (\forall\alpha\in\N^d) (\forall \lambda\in\Lambda)\\
&(\exists N\in\N)
\big(\sup_{x\in B_{\lambda,\eps} + \eps^N}\abs{\partial^\alpha u_\eps(x)}\le\eps^{-N},\text{ for small }\eps\big)\big\}.\\
\Null(A)=\,&\big\{(u_\eps)_\eps\in\EMod(A): (\forall\alpha\in\N^d) (\forall \lambda\in\Lambda)
\big(\big(\sup_{x\in B_{\lambda,\eps}}\abs{\partial^\alpha u_\eps(x)}\big)_\eps\in\Null_\R\big)\big\}.
\end{align*}
\end{cor}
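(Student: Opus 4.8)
The plan is to combine the two preceding results — the lemma on unions and Corollary \ref{cor_EModA_internal} (together with Proposition \ref{prop_NullA_internal}) — applied termwise to each $B_\lambda$. First I would invoke the lemma immediately above to reduce the problem from the union $A=\bigcup_{\lambda}B_\lambda$ to the individual internal, sharply bounded sets $B_\lambda$: we have $\EMod(A)=\bigcap_{\lambda\in\Lambda}\EMod(B_\lambda)$ and $\Null(A)=\bigcap_{\lambda\in\Lambda}\Null(B_\lambda)$. So it suffices to describe $\EMod(B_\lambda)$ and $\Null(B_\lambda)$ for a fixed $\lambda$ and then intersect over $\lambda$, which is exactly the shape of the asserted right-hand sides (the quantifier $(\forall\lambda\in\Lambda)$ sits outside).

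For the $\EMod$ formula, I would simply apply Corollary \ref{cor_EModA_internal} to each $B_\lambda$ with its chosen sharply bounded representative $(B_{\lambda,\eps})_\eps$: this says $(u_\eps)_\eps\in\EMod(B_\lambda)$ iff for every $\alpha\in\N^d$ there is $N\in\N$ with $\sup_{x\in B_{\lambda,\eps}+\eps^N}\abs{\partial^\alpha u_\eps(x)}\le\eps^{-N}$ for small $\eps$. Taking the intersection over $\lambda$ produces precisely the stated characterization of $\EMod(A)$. For the $\Null$ formula, I would use the equivalence of the first and third descriptions in Proposition \ref{prop_NullA_internal}: for each $\lambda$, $(u_\eps)_\eps\in\Null(B_\lambda)$ iff $(u_\eps)_\eps\in\EMod(B_\lambda)$ and $\bigl(\sup_{x\in B_{\lambda,\eps}}\abs{\partial^\alpha u_\eps(x)}\bigr)_\eps\in\Null_\R$ for all $\alpha$. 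Intersecting over $\lambda$ and using $\EMod(A)=\bigcap_\lambda\EMod(B_\lambda)$ yields the second displayed identity.

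There is essentially no obstacle here; the only point requiring a word of care is the placement of the quantifier $(\exists N\in\N)$ relative to $(\forall\lambda\in\Lambda)$ — the constant $N$ may depend on both $\alpha$ and $\lambda$, and one must not inadvertently try to choose it uniformly in $\lambda$ (which would be false when $\Lambda$ is infinite, e.g.\ for $\widetilde\Omega_c$). Since the intersection $\bigcap_\lambda\EMod(B_\lambda)$ literally means "for each $\lambda$, $(u_\eps)_\eps\in\EMod(B_\lambda)$", the per-$\lambda$ choice of $N$ is automatic, and the corollary follows directly from the cited results without further computation.
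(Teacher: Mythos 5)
Your proof is correct and is essentially the same as the paper's, which simply says to combine the preceding lemma ($\EMod(\bigcup B_\lambda)=\bigcap\EMod(B_\lambda)$, and likewise for $\Null$) with Corollary \ref{cor_EModA_internal} and Proposition \ref{prop_NullA_internal} applied to each $B_\lambda$. You have merely unpacked that one-line citation, including the correct observation about the quantifier order.
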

\begin{proof}
Combine the previous lemma with corollary \ref{cor_EModA_internal} and proposition \ref{prop_NullA_internal}.
\end{proof}
\begin{cor}\label{cor_Gen_Omega_is_tGen_Omega_c}
Let $\Omega$ be an open subset of $\R^d$. Then $\Gen(\Omega)=\tGen(\widetilde\Omega_c)$.
\end{cor}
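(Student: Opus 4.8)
The plan is to realise the equality as an isomorphism of differential algebras induced by the obvious restriction map, after unwinding $\EMod(\widetilde\Omega_c)$ and $\Null(\widetilde\Omega_c)$ by means of corollary \ref{cor_GenA_union_of_internal}. Write $\widetilde\Omega_c=\bigcup_{K\csub\Omega}\widetilde K$; each $\widetilde K$ is nonempty, internal and sharply bounded, the constant net $(K)_\eps$ being a sharply bounded representative. Thus corollary \ref{cor_GenA_union_of_internal} says that a net $(u_\eps)_\eps$ of smooth functions on $\R^d$ lies in $\EMod(\widetilde\Omega_c)$ iff for every $\alpha\in\N^d$ and $K\csub\Omega$ there is $N\in\N$ with $\sup_{x\in K+\eps^N}\abs{\partial^\alpha u_\eps(x)}\le\eps^{-N}$ for small $\eps$, and that such a net lies in $\Null(\widetilde\Omega_c)$ iff moreover $\big(\sup_{x\in K}\abs{\partial^\alpha u_\eps(x)}\big)_\eps\in\Null_\R$ for all such $\alpha,K$.

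Restricting functions from $\R^d$ to $\Omega$ maps $\EMod(\widetilde\Omega_c)$ into $\Mod_{\Cnt[\infty](\Omega)}$ and $\Null(\widetilde\Omega_c)$ into $\Null_{\Cnt[\infty](\Omega)}$: for a fixed order $m$ and a fixed $K\csub\Omega$ one takes the maximum of the finitely many $N$'s attached to $\abs\alpha\le m$ and uses $\sup_K\le\sup_{K+\eps^N}$. Since restriction commutes with products and with $\partial^\alpha$, it induces a homomorphism of differential algebras $\iota\colon\tGen(\widetilde\Omega_c)\to\Gen(\Omega)$. Injectivity of $\iota$ is then read off the characterisations above: if $(u_\eps)_\eps\in\EMod(\widetilde\Omega_c)$ and $(u_\eps|_\Omega)_\eps\in\Null_{\Cnt[\infty](\Omega)}$, then $\big(\sup_{x\in K}\abs{\partial^\alpha u_\eps(x)}\big)_\eps\in\Null_\R$ for every $\alpha$ and every $K\csub\Omega$, whence $(u_\eps)_\eps\in\Null(\widetilde\Omega_c)$.

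For surjectivity, let $u=[(v_\eps)_\eps]\in\Gen(\Omega)$. Fix an exhaustion $\Omega=\bigcup_n\Omega_n$ by open sets with $\overline{\Omega_n}$ compact and $\overline{\Omega_n}\subseteq\Omega_{n+1}$, together with cutoffs $\chi_n\in\Cnt[\infty](\R^d)$ whose support is a compact subset of $\Omega$ and with $\chi_n\equiv1$ on $\overline{\Omega_n}$. Choose any $n(\eps)\in\N$ with $n(\eps)\to\infty$ as $\eps\to0$ (e.g.\ $n(\eps)=\lfloor1/\eps\rfloor$) and let $u_\eps\in\Cnt[\infty](\R^d)$ be the extension by zero of $\chi_{n(\eps)}v_\eps$. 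Given $\alpha\in\N^d$ and $K\csub\Omega$, pick a compact $L$ with $K\subseteq\interior L\subseteq L\csub\Omega$; for small $\eps$ one has $K+\eps\subseteq L\subseteq\Omega_{n(\eps)}$, so $\chi_{n(\eps)}$ is identically $1$ on the open set $\Omega_{n(\eps)}$, which contains $K+\eps^N$ for every $N\ge1$, and hence $\partial^\alpha u_\eps=\partial^\alpha v_\eps$ on $K+\eps^N$. Choosing $N\ge1$ with $\sup_{x\in L}\abs{\partial^\alpha v_\eps(x)}\le\eps^{-N}$ for small $\eps$ (possible since $(v_\eps)_\eps\in\Mod_{\Cnt[\infty](\Omega)}$) gives $\sup_{x\in K+\eps^N}\abs{\partial^\alpha u_\eps(x)}\le\eps^{-N}$ for small $\eps$, so $(u_\eps)_\eps\in\EMod(\widetilde\Omega_c)$. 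Finally, for each $K\csub\Omega$ we have $\chi_{n(\eps)}\equiv1$ on $K$ for small $\eps$, hence $u_\eps=v_\eps$ on $K$ for small $\eps$; thus $(u_\eps|_\Omega-v_\eps)_\eps\in\Null_{\Cnt[\infty](\Omega)}$ and $\iota([(u_\eps)_\eps])=u$.

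It follows that $\iota$ is an isomorphism of differential algebras (and, since $x_\eps\in\Omega$ for small $\eps$ whenever $[(x_\eps)_\eps]\in\widetilde\Omega_c$, it is compatible with the point evaluations of proposition \ref{prop_GenA_pointvalues}(1) and the classical point values recalled in the preliminaries), which is the asserted identification. The only real work is the surjectivity: one has to absorb the $\eps$-dependent cutoff, i.e.\ arrange that every fattening $K+\eps^N$ occurring in the description of $\EMod(\widetilde\Omega_c)$ is eventually swallowed by a fixed compact $L\csub\Omega$ on which $\chi_{n(\eps)}$ is identically $1$, so that the glued net and $(v_\eps)_\eps$ have the same derivatives there; the remaining verifications are routine bookkeeping with the characterisations from corollary \ref{cor_GenA_union_of_internal}.
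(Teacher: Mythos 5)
Your proof is correct and follows essentially the same route as the paper: decompose $\widetilde\Omega_c=\bigcup_{K\csub\Omega}\widetilde K$, invoke corollary \ref{cor_GenA_union_of_internal}, and use a cut-off procedure for surjectivity. The paper states this in one sentence; you have merely spelled out the cut-off construction (the $\eps$-dependent absorption of $K+\eps^N$ into a fixed compact $L\subseteq\Omega_{n(\eps)}$) and the routine well-definedness and injectivity checks that the paper leaves implicit.
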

\begin{proof}
Since $\widetilde\Omega_c=\bigcup_{\emptyset\ne K\csub\Omega}\widetilde K$, and since elements of $\Gen(\Omega)$ have representatives in $\Cnt[\infty](\R^d)^{(0,1)}$ (by a cut-off procedure), this follows by corollary \ref{cor_GenA_union_of_internal}.
\end{proof}
Similarly, since $\GenR^d=\bigcup_{n\in\N}\{x\in\GenR^d: \abs{x}\le\caninf^{-n}\}$, $\tGen(\GenR^d)$ coincides with the definition of $\Gen(\GenR^d)$ given in \cite{HV_pointchar}.

Another approach to generalized functions on subsets of $\GenR^d$ could use nets of functions defined on subsets of $\R^d$ only. The following lemma relates such an approach to our definitions.
\begin{lemma}\label{lemma_tGen_by_representatives}
For each $\eps\in (0,1)$, let $\Omega_\eps\subseteq\R^d$ be open. Let $A_{m,\eps} = \{x\in\R^d: d(x,\R^d\setminus\Omega_\eps)\ge \eps^m\}$, for each $m$ and $\eps\in(0,1)$. Let $u_\eps\in\Cnt[\infty](\Omega_\eps)$ such that for each $\alpha\in\N^d$ and $m\in\N$, there exists $N\in\N$ such that $\sup_{x\in A_{m,\eps}, \abs{x}\le\eps^{-m}}\abs{\partial^\alpha u_\eps(x)}\le \eps^{-N}$, for small $\eps$.
Let $A=\bigcup_{m\in\N} [(A_{m,\eps})_\eps]$. Then there exists a unique $u\in\tGen(A)$ such that $\partial^\alpha u(\tilde x)=[(\partial^\alpha u_\eps(x_\eps))_\eps]$, for each $\tilde x\in A$, for each representative $(x_\eps)_\eps$ of $\tilde x$ and $\alpha\in\N^d$.
\end{lemma}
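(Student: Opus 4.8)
The plan is to obtain $u$ by a cut-off of the nets $u_\eps$, and to deduce uniqueness from the fact that $A$ is sharply open together with Proposition~\ref{prop_GenA_pointvalues}. Two preliminary observations are needed. First, $A_{m,\eps}\subseteq A_{m+1,\eps}$ (since $\eps^{m+1}\le\eps^m$), so $A$ is an increasing union of internal sets. Second, $A$ is open for the sharp topology: if $\tilde x\in A$ has a representative $(x_\eps)_\eps$ with $x_\eps\in A_{m_0,\eps}$ for small $\eps$, and $\sharpnorm{\tilde y-\tilde x}<e^{-(m_0+1)}$, then $\val(\tilde y-\tilde x)>m_0+1$, so any representative $(y_\eps)_\eps$ of $\tilde y$ satisfies $\abs{y_\eps-x_\eps}\le\eps^{m_0+1}$ for small $\eps$; hence $d(y_\eps,\R^d\setminus\Omega_\eps)\ge\eps^{m_0}-\eps^{m_0+1}\ge\eps^{m_0+1}$ for small $\eps$, that is, $\tilde y\in[(A_{m_0+1,\eps})_\eps]\subseteq A$. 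Granting existence, uniqueness then follows at once: if $u$ and $u'$ both satisfy the conclusion, the case $\alpha=0$ gives $u(\tilde x)=u'(\tilde x)$ for all $\tilde x\in A$, whence $u=u'$ by Proposition~\ref{prop_GenA_pointvalues}(3).

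For existence I would fix a mollifier $\phi\in\Cnt[\infty](\R^d)$ with $\phi\ge 0$, $\supp\phi\subseteq\{\abs{x}\le1\}$ and $\int_{\R^d}\phi=1$, set $\phi_\delta(x):=\delta^{-d}\phi(x/\delta)$, and --- this is the crucial choice --- take a net $\delta_\eps>0$ tending to $0$ \emph{faster than every power of $\eps$}, for instance $\delta_\eps:=e^{-1/\eps}$. With $V_\eps:=\{x\in\R^d: d(x,\R^d\setminus\Omega_\eps)>2\delta_\eps\}$, put $\chi_\eps:=\kar{V_\eps}\conv\phi_{\delta_\eps}\in\Cnt[\infty](\R^d)$. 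Then $0\le\chi_\eps\le1$, $\chi_\eps\equiv 1$ on the open set $W_\eps:=\{x: d(x,\R^d\setminus\Omega_\eps)>3\delta_\eps\}$, and $\supp\chi_\eps$ is contained in the closed set $\{x: d(x,\R^d\setminus\Omega_\eps)\ge\delta_\eps\}\subseteq\Omega_\eps$; hence $v_\eps:=\chi_\eps u_\eps$ on $\Omega_\eps$, extended by $0$ outside $\supp\chi_\eps$, defines $v_\eps\in\Cnt[\infty](\R^d)$ (the degenerate cases $\Omega_\eps=\emptyset$ and $\Omega_\eps=\R^d$ give $v_\eps=0$ and $v_\eps=u_\eps$ respectively).

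Then I would check that $u:=[(v_\eps)_\eps]$ is as required. Let $\tilde x\in A$; some representative of $\tilde x$ lies in $A_{m,\eps}$ for small $\eps$, so for an \emph{arbitrary} representative $(x_\eps)_\eps$ of $\tilde x$ one obtains, exactly as in the openness argument, $d(x_\eps,\R^d\setminus\Omega_\eps)\ge\eps^m-\eps^{m+1}\ge\eps^m/2$ for small $\eps$; since $\delta_\eps$ beats every power of $\eps$, $\eps^m/2>3\delta_\eps$ for small $\eps$, so $x_\eps\in W_\eps$ for small $\eps$. As $W_\eps$ is open and $\chi_\eps\equiv 1$ on it, the Leibniz rule gives $\partial^\alpha v_\eps(x_\eps)=\partial^\alpha u_\eps(x_\eps)$ for small $\eps$ and every $\alpha$. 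Moreover $x_\eps\in A_{m+1,\eps}$ and $\abs{x_\eps}\le\eps^{-M}$ for some $M\in\N$ and small $\eps$, so the hypothesis applied with index $\max(m+1,M)$ yields $(\partial^\alpha u_\eps(x_\eps))_\eps\in\Mod_\C$. This simultaneously shows $(v_\eps)_\eps\in\EMod(A)$ and, by Proposition~\ref{prop_GenA_pointvalues}(1), that $\partial^\alpha u(\tilde x)=[(\partial^\alpha v_\eps(x_\eps))_\eps]=[(\partial^\alpha u_\eps(x_\eps))_\eps]$ for every $\tilde x\in A$, every representative of $\tilde x$ and every $\alpha\in\N^d$.

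The main obstacle is precisely the choice of $\delta_\eps$: cutting off at a fixed power $\eps^k$ cannot work, because the values of $u$ at points $\tilde x\in A$ must be preserved and such points may approach $\partial\Omega_\eps$ at rate $\eps^m$ with $m$ arbitrarily large, which forces the transition scale to decay faster than every power of $\eps$. The price is that $\chi_\eps$ and $v_\eps$ are in general not moderate as functions on $\R^d$, but this is harmless: $\EMod(A)$ and $\Null(A)$ only test point values on $A$, where $\chi_\eps$ is locally constant equal to $1$. The rest --- smoothness of $v_\eps$, the elementary properties of $\chi_\eps$, the passage between representatives, and the degenerate choices of $\Omega_\eps$ --- is routine.
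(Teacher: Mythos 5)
Your proof is correct and takes essentially the same approach as the paper: the key idea in both is to cut off $u_\eps$ at a scale that decays faster than every power of $\eps$, so that the cut-off region eventually contains any representative of any point of $A$. The only cosmetic difference is in the construction of the cut-off: the paper defines $\chi_\eps$ piecewise by letting the index $m$ of the pair $(A_{m,\eps},A_{m+1,\eps})$ grow as $\eps$ decreases (so that $\chi_\eps$ is $1$ on $A_{m,\eps}$ and supported in $A_{m+1,\eps}$ for $\eps\in(\tfrac{1}{m+1},\tfrac{1}{m}]$), whereas you use a single mollification at the superpolynomially small scale $\delta_\eps=e^{-1/\eps}$; the two devices achieve the same effect. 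For uniqueness you go through openness of $A$ and Proposition~\ref{prop_GenA_pointvalues}(3); the paper observes more directly that agreement of $\partial^\alpha u(\tilde x)$ for all $\alpha$ and all $\tilde x\in A$ is literally the definition of the difference lying in $\Null(A)$, which does not need openness — but your version is also valid.
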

\begin{proof}
For $m\in\N$ and $\eps\in (\frac{1}{m+1},\frac{1}{m}]$, let $\chi_\eps\in\Cnt[\infty](\R^d)$ with $\chi_\eps(x)=1$, if $x\in A_{m,\eps}$ and $\chi_\eps(x)=0$, if $x\notin A_{m+1,\eps}$. Then $v_\eps:=\chi_\eps u_\eps\in\Cnt[\infty](\R^d)$, $\forall\eps$. Let $\tilde x\in A$. Then there exists $m\in\N$ and a representative $(x_\eps)_\eps$ with $x_\eps\in A_{m,\eps}$, for each $\eps$. Hence for any representative $(x'_\eps)_\eps$, $x'_\eps\in A_{m+1,\eps}$, for small $\eps$, and $\partial^\alpha v_\eps(x_\eps') = \partial^\alpha u_\eps(x'_\eps)$, for small $\eps$ and for $\alpha\in\N^d$. Thus $u:=[(v_\eps)_\eps]\in\tGen(A)$ and $\partial^\alpha u(\tilde x) = [(\partial^\alpha u_\eps(x'_\eps))_\eps]$. Unicity of $u$ follows directly from the definition of $\Null(A)$.
\end{proof}
Under the conditions of the previous lemma, we will (loosely) say that $[(u_\eps)_\eps]\in\tGen(A)$.

\begin{lemma}\label{lemma_internal_nbd}
Let $A\subseteq \GenR^d$ be internal and sharply bounded and let $B\subseteq\GenR^d$ be an internal sharp neighbourhood of $A$. Then:
\begin{enumerate}
\item There exists $m\in\N$ such that for each $\tilde a\in A$, $\overline B(\tilde a,\caninf^m)=\{\tilde x\in\GenR^d: \abs{\tilde x- \tilde a}\le \caninf^m\}\subseteq B$.
\item Given a sharply bounded representative $(A_\eps)_\eps$ of $A$, we can find a representative $(B_\eps)_\eps$ of $B$ such that $A_\eps + \eps^m \subseteq B_\eps$, $\forall\eps$.
\end{enumerate}
\end{lemma}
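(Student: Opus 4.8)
The plan is to deduce~(2) from~(1), and to prove~(1) by a saturation argument on internal sets. Fix a sharply bounded representative $(A_\eps)_\eps$ of $A$ and a representative $(B_\eps)_\eps$ of $B$ (recall $B$ is internal). The convenient reformulation uses, for $m\in\N$, the generalized number $\psi(m):=[(\sup_{x\in A_\eps+\eps^m}d(x,B_\eps))_\eps]\in\GenR$ (defined once $B$ is nonempty, non-increasing in $m$, and $\le 2\caninf^m$ because $A\subseteq B$). By the distance characterisation of inclusions between internal sets (cf.\ \cite{OV_internal}), for $\tilde a=[(a_\eps)_\eps]\in A$ one has $[(\{x:\abs{x-a_\eps}\le\eps^m\})_\eps]\subseteq B$ iff $\big(\sup_{\abs{x-a_\eps}\le\eps^m}d(x,B_\eps)\big)_\eps\in\Null_\R$, and taking the supremum over $\tilde a\in A$ shows that~(1) is equivalent to $\psi(m_0)=0$ for some $m_0$ (an index shift is irrelevant because, as $\tilde a$ runs over $A$, the balls $\overline B(\tilde a,\caninf^m)$ fill a set lying between $[(A_\eps+\eps^{m+1})_\eps]$ and $[(A_\eps+\eps^{m-1})_\eps]$). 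Granting this, (1) gives~(2): if $\psi(m_0)=0$ then also $\psi(m_0+1)=0$, so putting $B_\eps':=B_\eps\cup(A_\eps+\eps^{m_0+1})$ one can push every point of $B_\eps'$ a negligible distance into $B_\eps$, whence $[(B_\eps')_\eps]=B$, while $A_\eps+\eps^{m_0+1}\subseteq B_\eps'$ by construction.

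For~(1) I would argue by contradiction: suppose $\psi(m)\ne 0$ for all $m$. Then for each $m$ there are $q_m\in\N$ (forced to satisfy $q_m\ge m$, since $\psi(m)\le 2\caninf^m$) and a set of $\eps$'s accumulating at $0$ on which $\sup_{x\in A_\eps+\eps^m}d(x,B_\eps)>\eps^{q_m}$; on each such $\eps$ fix $a_{m,\eps}\in A_\eps$ whose $\eps^m$-neighbourhood sticks out of $B_\eps$ by more than $\eps^{q_m}$. The goal is to glue the $a_{m,\eps}$, scanning over $\eps$, into a single $\tilde a\in A$ with $\overline B(\tilde a,\caninf^n)\not\subseteq B$ for every $n$, i.e.\ $\tilde a\notin\interior B$, contradicting that $B$ is a sharp neighbourhood of $A$. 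The gluing is carried out with the saturation property of decreasing sequences of nonempty internal sharply bounded subsets of $\GenR^d$ --- such a sequence has nonempty intersection, which follows from \cite[Prop.~2.9]{OV_internal} just as in the proof of Lemma~\ref{lemma_moderate_uniform} --- applied to a decreasing, internal, sharply bounded family of subsets of $A$ that records ``at scale $\ge m$, $A_\eps$ sticks out of $B_\eps$''; the bad point $\tilde a$ is then read off the nonempty intersection.

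I expect the technical heart to be the bookkeeping between cofinal and cofinite sets of indices $\eps$. The failure $\overline B(\tilde a_m,\caninf^m)\not\subseteq B$ controls $d(\cdot,B_\eps)$ only on a cofinal (not cofinite) set of $\eps$, and only down to an order $\eps^{q_m}$ that may grow with $m$, so a crude diagonalisation over single values of $\eps$ need not yield a point sticking out of $B_\eps$ at a fixed order uniformly in $m$. The remedy is to keep everything phrased through genuine generalized points: feed the saturation lemma honestly internal sets, so its output is a generalized point, and recover~(1) by testing that point against the definition of the sharp interior --- the existence of a full representative inside the relevant internal set converts the available cofinal information into the cofinite statement one needs. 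Verifying that the decreasing family one feeds to the saturation lemma is arranged so that its intersection really is contained in $A\setminus\interior B$ is the step I would spend the most care on.
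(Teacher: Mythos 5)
Your reduction of part~(2) to part~(1) matches the paper: one shows $[(A_\eps+\eps^{m})_\eps]\subseteq B$ and then replaces the representative $(B_\eps)_\eps$ by $(B_\eps\cup(A_\eps+\eps^{m}))_\eps$, invoking the fact from \cite{OV_internal} that the union of internal sets is represented by the union of their representatives. The index shift $m\mapsto m+1$ in your version is harmless, since the lemma only asks for some $m$, and your $\psi(m)$ reformulation of~(1) is a clean repackaging of the required uniform distance estimate. That part is fine.

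The gap is in your plan for part~(1), and it is exactly at the spot you yourself flag as delicate. You correctly observe that the negation of~(1) controls $d(\cdot,B_\eps)$ only on a cofinal (not cofinite) set of $\eps$'s for each scale $m$, and you propose to repair this by feeding a decreasing family of internal sets to a saturation (nonempty-intersection) principle. But the most natural internal sets one would write down, e.g.\ $D_m=[(D_{m,\eps})_\eps]$ with $D_{m,\eps}=\{a\in A_\eps:\sup_{\abs{x-a}\le\eps^m}d(x,B_\eps)\ge\eps^{q_m}\}$, are typically \emph{empty} as internal sets, because $D_{m,\eps}=\emptyset$ can hold for cofinitely many $\eps$. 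Padding them (say by replacing $D_{m,\eps}$ with $A_\eps$ whenever it is empty) restores nonemptiness but destroys the property that a point in the intersection witnesses failure at every scale; and intersecting the padded sets across $k\le m$ to force monotonicity reintroduces emptiness, because there is no coherence across $m$ in which $\eps$'s are bad. So the decreasing family one would need cannot be specified in the way you suggest, and the ``remedy'' paragraph does not supply the missing mechanism.

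What the paper actually does is not saturation over a decreasing family but a direct diagonal interleaving over index pairs $(n,m)$: from the assumed $\tilde a_n\in A$ and $\tilde x_n\notin B$ with $\abs{\tilde x_n-\tilde a_n}\le\caninf^n$, one picks for each $n$ a null sequence $(\eps_{n,m})_m$ of bad $\eps$'s (possible because $\tilde x_n\notin B$ makes $(d(x_{n,\eps},B_\eps))_\eps\notin\Null_\R$), and by enumerating all pairs $(n,m)$ one arranges the $\eps_{n,m}$ to be pairwise distinct. This disjointness is the crucial combinatorial gain: it lets one glue the $a_{n,\eps_{n,m}}$ into a single $\tilde a\in A$ (using sharp boundedness of $(A_\eps)_\eps$ for moderateness), while redefining each $\tilde x_n$ to equal $\tilde a$ off $\{\eps_{n,m}:m\}$ keeps $\tilde x_n\notin B$ and gives $\abs{\tilde x_n-\tilde a}\le 2\caninf^n$. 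Then $\tilde x_n\to\tilde a$ outside $B$ directly contradicts $B$ being a neighbourhood of $A$. This is closer in spirit to the diagonal step inside the paper's Lemma~\ref{lemma_moderate_uniform} than to the Cantor-intersection side of that lemma; the saturation ingredient you cite (Prop.~2.9 of \cite{OV_internal}) is used there only because a genuinely decreasing internal family is \emph{given}, whereas here one would first have to construct it, which is the step that fails. To complete your proof you should abandon the decreasing-family framing and instead carry out the explicit enumeration/interleaving of $\eps$-indices.
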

\begin{proof}
(1) Let $A=[(A_\eps)_\eps]$ and $B=[(B_\eps)_\eps]$. W.l.o.g., $A\ne\emptyset$ and $(A_\eps)_\eps$ is a sharply bounded representative. Suppose that for each $n\in\N$, there exists $\tilde a_n = [(a_{n,\eps})_\eps]\in A$ and $\tilde x_n= [(x_{n,\eps})_\eps]\in \GenR^d\setminus B$ with $\abs{\tilde x_n - \tilde a_n}\le \caninf^n$. W.l.o.g., $a_{n,\eps}\in A_\eps$, $\forall\eps$. By \cite[Prop.\ 2.1]{OV_internal}, $(d(x_{n,\eps}, B_\eps))_\eps\notin\Null_\R$, so for each $n\in\N$, there exists $k_n\in\N$ such that for each $\eta\in(0,1)$, there exists $\eps\le\eta$ with $d(x_{n,\eps}, B_\eps)\ge \eps^{k_n}$. We can thus find $\eps_{n,m}\in (0,1/m)$, for each $n,m\in\N$ (by enumerating $(\eps_{n,m})_{n,m\in\N}$, we can ensure that all $\eps_{n,m}$ are different) and $x_{n,\eps_{n,m}}\in\R^d$ with $d(x_{n,\eps_{n,m}}, B_{\eps_{n,m}})\ge \eps_{n,m}^{k_n}$ and $\abs[]{x_{n,\eps_{n,m}} - a_{n,\eps_{n,m}}}\le 2 \eps_{n,m}^n$, for each $n,m\in\N$. Let $a_{\eps_{n,m}}:= a_{n,\eps_{n,m}}$, for each $n,m\in\N$ and $a_\eps\in A_\eps$ arbitrary, if $\eps\notin\{\eps_{n,m}: n,m\in\N\}$. Let $x_{n,\eps}:= a_\eps$, if $\eps\notin\{\eps_{n,m}: m\in\N\}$, for each $n\in\N$. As $(A_\eps)_\eps$ is sharply bounded, $\tilde a:= [(a_\eps)_\eps]\in A$ and $\abs{\tilde x_n - \tilde a}\le 2 \caninf^n$, for each $n\in\N$. Since $B$ is a neighbourhood of $A$, $\tilde x_n\in B$ for large $n$, a contradiction.\\
(2) As $(A_\eps)_\eps$ is sharply bounded, it follows that $[(A_{\eps} + \eps^{m})_\eps]\subseteq B$. Since $[((A_{\eps} + \eps^{m})\cup B_{\eps})_\eps]$ is the smallest internal set containing $[(A_{\eps} + \eps^m)_\eps]$ and $B$ \cite[Prop. 2.8]{OV_internal}, $B = [((A_{\eps} + \eps^m)\cup B_{\eps})_\eps]$.
\end{proof}

Recall that the interleaved closure of $A\subseteq\GenR^d$ \cite[Lemma 2.7.]{OV_internal} is the set
\[
\interl(A):=\big\{\sum_{j=1}^m e_{S_j} x_j: m\in\N, \{S_1,\dots,S_m\} \text{ partition of }(0,1), x_j\in A\big\}.
\]
\begin{prop}[Generalized sheaf property]\label{prop_sheaf}\leavevmode
\begin{enumerate}
\item Let $\Omega\subseteq\GenR^d$ be a union of an increasing sequence $(A_{n})_{n\in\N}$ of internal sets with $A_{n+1}$ a neighbourhood of $A_{n}$, for each $n$ (hence, in particular, $\Omega$ open). If $u_n\in\tGen(A_n)$ and $\restr{u_{n+1}}{A_n}= u_n$, for each $n\in\N$, then there exists a unique $u\in\tGen(\Omega)$ such that $\restr{u}{A_n}=u_{n}$, for each $n\in\N$.
\item For each $m\in\N$, let $\Omega_m\subseteq\GenR^d$ be a union of an increasing sequence $(A_{m,n})_{n\in\N}$ of internal sets with $A_{m,n+1}$ a neighbourhood of $A_{m,n}$, for each $n$. Let $u_m\in\tGen(\Omega_m)$, for each $m\in\N$ such that $\restr{u_m}{\Omega_m\cap\Omega_{m'}} = \restr{u_{m'}}{\Omega_m\cap\Omega_{m'}}$, for each $m$, $m'\in\N$. Let $\Omega = \interl(\bigcup_{m\in\N}\Omega_m)$. Then there exists a unique $u\in\tGen(\Omega)$ such that $\restr{u}{\Omega_m} = u_m$, for each $m\in\N$.
\end{enumerate}
\end{prop}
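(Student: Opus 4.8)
The plan is to prove (1) first and then bootstrap (2) from it. For part (1), the idea is a straightforward "patching of representatives" along the exhausting sequence, using Lemma \ref{lemma_internal_nbd} to get good representatives with built-in $\eps$-collars. First I would fix, for each $n$, a sharply bounded representative $(A_{n,\eps})_\eps$ of $A_n$; since $A_{n+1}$ is an internal sharp neighbourhood of $A_n$, Lemma \ref{lemma_internal_nbd}(2) gives (after replacing $A_{n+1,\eps}$ by a larger representative) some $m_n\in\N$ with $A_{n,\eps}+\eps^{m_n}\subseteq A_{n+1,\eps}$ for all $\eps$; w.l.o.g.\ $m_n$ is increasing. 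Pick representatives $(u_{n,\eps})_\eps\in\EMod(A_n)$ of $u_n$. The compatibility $\restr{u_{n+1}}{A_n}=u_n$ means $(u_{n+1,\eps}-u_{n,\eps})_\eps\in\Null(A_n)$, so by Proposition \ref{prop_NullA_internal} we may, for each $n$, choose an exponent $p_n\in\N$ (increasing) with $\sup_{x\in A_{n,\eps}+\eps^{p_n}}\abs{\partial^\alpha(u_{n+1,\eps}-u_{n,\eps})(x)}\le\eps^{n}$ for small $\eps$ and for all $\abs\alpha\le n$. Now I would splice in $\eps$: choose a strictly decreasing null sequence $(\eta_n)_n$ and define $u_\eps:=u_{n,\eps}$ for $\eps\in(\eta_{n+1},\eta_n]$, after multiplying by a cut-off that is $1$ on a slightly shrunken collar of $A_{n,\eps}$ and $0$ outside $A_{n+1,\eps}$ (so that $(u_\eps)_\eps\in\Cnt[\infty](\R^d)^{(0,1)}$ and, on the relevant collars, agrees with $u_{n,\eps}$). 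Using Corollary \ref{cor_GenA_union_of_internal} one checks $(u_\eps)_\eps\in\EMod(\Omega)$: any point of $\Omega$ lies in some $A_{n_0}$, hence eventually in $A_{n_0,\eps}+\eps^{m_{n_0}}$, and for $\eps$ small enough the cut-off is $1$ there, so the moderateness bound for $u_{n_0}$ applies; larger indices $n$ only contribute values on $A_{n,\eps}\supseteq A_{n_0,\eps}$, where the estimate is inherited. Then $\restr{u}{A_{n_0}}=u_{n_0}$ follows because on representatives $u_\eps-u_{n_0,\eps}$ is, for $\eps\in(\eta_{n+1},\eta_n]$ with $n\ge n_0$, a telescoping sum $\sum_{k=n_0}^{n-1}(u_{k+1,\eps}-u_{k,\eps})$ on the collar of $A_{n_0}$, whose $\alpha$-derivatives are bounded by $\sum_{k\ge n_0}\eps^k$-type terms, hence negligible on $A_{n_0}$ by Proposition \ref{prop_NullA_internal}. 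Uniqueness is immediate from the definition of $\Null(\Omega)$ and $\Omega=\bigcup_n A_n$.

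For part (2), the difference from a classical sheaf gluing is the appearance of $\interl(\cdot)$: a point $\tilde x\in\Omega$ is a finite interleaving $\sum_j e_{S_j}x_j$ with $x_j\in\Omega_{m_j}$, so no single $\Omega_m$ covers $\Omega$, but locally-in-$\eps$ it does. The plan is to reduce to part (1) by first constructing, for each \emph{pair} of indices, a glued function and then interleaving. Concretely: enumerate $\N=\{m(1),m(2),\dots\}$ and define $\Theta_k:=\interl(\Omega_{m(1)}\cup\cdots\cup\Omega_{m(k)})$; I claim each $\Theta_k$ is again a countable increasing union of internal sets, each a neighbourhood of the previous, because $\interl$ of finitely many such unions can be written as $\bigcup_n \interl(A_{m(1),n}\cup\cdots\cup A_{m(k),n})$ and the interleaved union of finitely many internal sets is internal (with the neighbourhood property passing through, using that interleaving commutes with taking collars). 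On $\Theta_k$ one builds $v_k\in\tGen(\Theta_k)$ by $\eps$-wise patching: for a point $\tilde x=\sum_j e_{S_j}x_j$ with $x_j\in\Omega_{m(j)}$, set $v_{k,\eps}$ to equal $u_{m(j),\eps}$ on (a collar of) the internal piece indexed by $S_j$ — these agree on overlaps by the compatibility hypothesis $\restr{u_m}{\Omega_m\cap\Omega_{m'}}=\restr{u_{m'}}{\Omega_m\cap\Omega_{m'}}$, which after passing to representatives and invoking Proposition \ref{prop_NullA_internal} gives a negligible difference on each relevant internal collar. Then $\Theta_k\subseteq\Theta_{k+1}$ with $v_{k+1}$ restricting to $v_k$, and $\Omega=\bigcup_k\Theta_k$ satisfies the hypotheses of part (1) (write each $\Theta_k$ as an increasing internal exhaustion and diagonalize), producing the desired $u\in\tGen(\Omega)$ with $\restr{u}{\Omega_m}=u_m$. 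Uniqueness again follows from $\Null(\Omega)$ and $\Omega=\bigcup_m\Omega_m\subseteq\bigcup_k\Theta_k$.

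The main obstacle I anticipate is bookkeeping in part (2): making precise that $\interl$ of finitely many "increasing internal exhaustions with the neighbourhood property" is again of that form, and that one can pick representatives of the internal pieces of an interleaving on which the $v_{m,\eps}$ patch together smoothly (cut-offs must be chosen on the internal sets $[(S_{j,\eps})_\eps]$-indexed pieces, using that a partition $\{S_1,\dots,S_m\}$ of $(0,1)$ decomposes each $A_{n,\eps}$ into a disjoint union over $\eps$). The estimates themselves are routine applications of Corollary \ref{cor_GenA_union_of_internal} and Proposition \ref{prop_NullA_internal} once the representatives are set up; the only genuinely delicate point in part (1) is choosing the null sequence $(\eta_n)_n$ fast enough that, for each fixed $n_0$, the bound "$\sup_{x\in A_{n_0,\eps}+\eps^N}\abs{\partial^\alpha u_\eps(x)}\le\eps^{-N}$ for small $\eps$" survives the switching between the $u_{n,\eps}$ as $n\to\infty$ with $\eps\to 0$ — this is handled by requiring $\eta_n<\eta_{n-1}$ small enough that all the "for small $\eps$" thresholds appearing for indices $\le n$ are cleared below $\eta_n$, a standard diagonal choice.
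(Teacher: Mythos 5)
Your plan diverges from the paper's (which proves (2) directly by a single global partition of unity $\psi_{m,n,\eps}$ subordinate to the diagonal sets $B_{n,\eps}=A_{1,n,\eps}\cup\cdots\cup A_{n,n,\eps}$, and then checks coherence) in that you propose $\eps$-wise splicing for (1) and a reduction of (2) to (1) via interleavings $\Theta_k$. The splicing idea \emph{can} be made to work, but the negligibility estimate as you wrote it fails: once you fix $\eps^{k}$ as the bound for the $k$-th telescoping term $u_{k+1,\eps}-u_{k,\eps}$, the total $\sum_{k\ge n_0}\eps^{k}\sim \eps^{n_0}/(1-\eps)$ is only $O(\eps^{n_0})$, which is \emph{not} negligible (it is not $o(\eps^m)$ for $m>n_0$), so you cannot conclude $(u_\eps-u_{n_0,\eps})_\eps\in\Null(A_{n_0})$ this way. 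The correct route is to fix the target exponent $m$ first, use the full strength of Proposition~\ref{prop_NullA_internal} to get each term $\le\eps^{m+1}$ on the relevant collar (with its own threshold $\eta_{k,m,\alpha}$), and then absorb the growing number of summands $(n-n_0)$ into the extra factor of $\eps$ by forcing $\eta_n<1/n$, say, in the diagonal choice; and for $|\alpha|>n$ the initial finitely many terms must be handled separately since your estimate only covers $|\alpha|\le n$. You also need to be careful that the collar exponent in the estimate around $A_{n_0,\eps}$ eventually fits inside the collars around $A_{k,\eps}$ — this needs the neighbourhood inclusions $A_{n_0,\eps}+\eps^{m_{n_0}}\subseteq A_{n_0+1,\eps}$, not just $A_{n_0,\eps}\subseteq A_{k,\eps}$.

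For (2), the phrase ``set $v_{k,\eps}$ to equal $u_{m(j),\eps}$ on (a collar of) the internal piece indexed by $S_j$'' is not a construction: the partition $\{S_1,\dots,S_M\}$ of $(0,1)$ varies with the point $\tilde x$, whereas $v_{k,\eps}$ must be a single element of $\Cnt[\infty](\R^d)$ for each $\eps$. The actual difficulty — which the reduction to (1) does not remove — is to produce a fixed net that agrees with all the $u_{m,\eps}$ on all their respective collars simultaneously; this is precisely what the paper's partition of unity does, choosing cut-offs $\psi_{m,n,\eps}$ supported in $A_{m,n+3,\eps}\setminus B_{n-2,\eps}$ so that the overlaps are finite and the $\sup$-estimates close. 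Your observation that $\interl$ of finitely many internal sets is internal (essentially \cite[Prop.~2.8]{OV_internal}) and that the neighbourhood property passes to such finite unions via Lemma~\ref{lemma_internal_nbd} is sound and mirrors the paper's use of the $B_n$; but the gluing of the $u_{m,\eps}$ \emph{within} each $\Theta_k$ is exactly the step you would need to spell out, and a partition-of-unity argument like the paper's seems unavoidable there.
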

\begin{proof}
Let $\Omega_m$, $\Omega$ and $A_{m,n}$ as in (2). Let $u_{m,n}=[(u_{m,n,\eps})_\eps]\in\tGen(A_{m,n})$, for each $m,n\in\N$ such that $\restr{u_{m,n}}{A_{m,n}\cap A_{m',n'}}=\restr{u_{m',n'}}{A_{m,n}\cap A_{m',n'}}$, for each $m$, $m'$, $n$, $n'\in\N$. It is sufficient to show that there exists a unique $u\in\tGen(\Omega)$ such that $\restr{u}{A_{m,n}}=u_{m,n}$, for each $m,n,\in\N$. Let $A_{m,n}=[(A_{m,n,\eps})_\eps]$. Since $\Omega_m = \bigcup (A_{m,n}\cap B(0,\caninf^n))$, we may assume that all $A_{m,n}$ (and hence $(A_{m,n,\eps})_\eps$ \cite[Lemma 2.4]{OV_internal}) are sharply bounded.
We may also assume that all $A_{m,n,\eps}$ are closed \cite[Cor. 2.2]{OV_internal}. Let $m,n\in\N$. By lemma \ref{lemma_internal_nbd}, we may assume that there exist $k_{m,n}\in\N$ such that $A_{m,n,\eps}+\eps^{k_{m,n}}\subseteq A_{m,n+1,\eps}$, for each $m,n,\eps$.\\ 
Let $B_n=[(B_{n,\eps})]$ with $B_{n,\eps} = A_{1,n,\eps}\cup \cdots\cup A_{n,n,\eps}$, for each $n\in\N$ and $\eps\in (0,1)$.
Let $\theta\in \Cnt[\infty](\R^d)$ with $\theta(x)=0$, if $\abs x\ge 1$ and $\theta(x)\ge 0$, for each $x\in\R^d$ with $\int_{\R^d}\theta = 1$ and let $\theta_r(x):=\inv r \theta(\inv r x)$, for $r\in\R^+$. Let $\chi_A$ denote the characteristic function of a set $A\subseteq\R^d$. For each $m,n,\eps$, let $\phi_{m,n,\eps}= \chi_{A_{m,n+2,\eps}\setminus B_{n-1,\eps}}\conv \theta_{\eps^{l_{m,n}}}$, where $l_{m,n}=\max_{i\le m, j\le n+2}k_{i,j}$. Then $\phi_{m,n,\eps}(x)=1$, for each $x\in A_{m,n+1,\eps}\setminus B_{n,\eps}$ and $\supp\phi_{m,n,\eps}\subseteq A_{m,n+3,\eps}\setminus B_{n-2,\eps}$. Further, $\sup_{x\in\R^d}\abs{\partial^\alpha \phi_{m,n,\eps}(x)}\le \eps^{-l_{m,n}\abs\alpha}\int_{\R^d}\abs{\partial^\alpha\theta}$ by the properties of the convolution. Let $\phi_\eps:= \sum_{m,n\in\N, m\le n+1}\phi_{m,n,\eps}$. Then $\phi_\eps\in\Cnt[\infty](\bigcup_{n\in\N} B_{n,\eps})$ and for each $n$, $(\sup_{x\in B_{n,\eps}}\abs{\partial^\alpha \phi_\eps(x)})_\eps\in\Mod_\R$, since $\supp\phi_{m',n',\eps}\cap B_{n,\eps}\ne\emptyset$ only for $n'\le n+2$. Also $\phi_\eps(x)\ge 1$, for each $x\in \bigcup_{m,n\in\N, m\le n+1} (A_{m,n+1,\eps}\setminus B_{n,\eps}) = \bigcup_{n\in\N} B_{n,\eps}$ (since $B_{n+1,\eps}\setminus B_{n,\eps}= \bigcup_{m\le n+1} (A_{m,n+1,\eps}\setminus B_{n,\eps})$, for each $n$).
Let $\psi_{m,n,\eps}:= \phi_{m,n,\eps}/\phi_\eps\in \Cnt[\infty](\R^d)$. Then $\sum_{m,n\in\N, m\le n+1} \psi_{m,n,\eps}(x)=1$, for each $x\in \bigcup_{n\in\N} B_{n,\eps}$. Since $\sup_{x\in B_{n,\eps}}\abs{1/\phi_\eps(x)}\le 1$, for each $n$, we find $M_{m,n}\in\N$ such that
\[
\sup_{x\in\R^d} \abs{\partial^\alpha\psi_{m,n,\eps}(x)} = \sup_{x\in B_{n+3,\eps}} \abs{\partial^\alpha\psi_{m,n,\eps}(x)}
\le\eps^{-M_{m,n}},
\]
for small $\eps$. Let $u_\eps := \sum_{m,n\in\N, m\le n+1} \psi_{m,n,\eps} \cdot u_{m,n+3,\eps} \in \Cnt[\infty](\bigcup_{n\in\N} B_{n,\eps})$, for each $\eps$.\\
Let $N\in\N$ and $\alpha\in\N^d$. Then there exists $M\in\N$ such that
\begin{multline*}
\sup_{x\in B_{N,\eps}} \abs{\partial^\alpha u_\eps(x)}\le \sum_{m\le n+1\le N+3}\sup_{x\in B_{N,\eps}} \abs{\partial^\alpha (\psi_{m,n,\eps}\cdot u_{m,n+3,\eps})(x)}\\
\le \sum_{m\le n+1\le N+3}\sup_{x\in A_{m,n+3,\eps}} \abs{\partial^\alpha (\psi_{m,n,\eps}\cdot u_{m,n+3,\eps})(x)} \le\eps^{-M},
\end{multline*}
for small $\eps$, since $\supp\psi_{m,n,\eps} \subseteq A_{m,n+3,\eps}$, and by corollary \ref{cor_EModA_internal}.
As in lemma \ref{lemma_tGen_by_representatives},
we find a unique $u\in\tGen(\bigcup_{n} B_n)$ with $\partial^\alpha u(\tilde x)=[(\partial^\alpha u_\eps(x_\eps))_\eps]$, for each $\tilde x\in \bigcup_n B_n$ and $\alpha\in\N^d$. Let $\tilde x = [(x_\eps)_\eps]\in A_{m,n}$. W.l.o.g., $x_\eps\in A_{m,n,\eps}$, for each $\eps$.
\[\abs{u_\eps(x_\eps) - u_{m,n,\eps}(x_\eps)}\le \sum_{m'\le n' + 1\le \max(m,n) + 3} \abs{\psi_{m',n',\eps}(x_\eps)}\abs{u_{m',n'+3,\eps}(x_\eps) - u_{m,n,\eps}(x_\eps)}\in\Null_\R,\]
by the coherence property, since $\supp\psi_{m',n',\eps}\subseteq A_{m',n'+3,\eps}$. Hence $u(\tilde x)=u_{m,n}(\tilde x)$.
By proposition \ref{prop_GenA_pointvalues}(2), also $\partial^\alpha u(\tilde x)= \partial^\alpha u_{m,n}(\tilde x)$, for each $\alpha\in\N^d$, since $u_{m,n}(\tilde x)= u_{m,n+1}(\tilde x)$ and $\tilde x\in \interior {(A_{m,n+1})}$. Hence $\restr{u}{A_{m,n}}=u_{m,n}$ by the definition of $\Null(A_{m,n})$.\\
Finally, let $\tilde x\in \interl(\bigcup_{m\in\N} \Omega_m)$, i.e., $\tilde x = \sum_{j=1}^M \tilde x_j e_{S_j}$, for some $M\in\N$, a partition $\{S_1,\dots, S_M\}$ of $(0,1)$ and $\tilde x_j\in \Omega_{m_j}$, for some $m_j\in\N$. Then there exists $n\ge \max_{j} m_j$ such that $\tilde x_j\in A_{m_j,n}\subseteq B_n$, for each $j$. Since $B_n$ is internal, $\tilde x\in \interl(B_n)=B_n$. Hence $u\in\tGen(\interl(\bigcup_{m\in\N} \Omega_m))$.
\end{proof}

The following lemma shows that the sheaf property of $\Gen(\Omega)$, $\Omega\subseteq\R^d$ \cite[Thm.\ 1.2.4]{GKOS} can be viewed a special case of proposition \ref{prop_sheaf} (in view of the fact that every open cover of an open $\Omega\subseteq\R^d$ has a countable subcover).
\begin{lemma}
Let $\Omega_\lambda\subseteq \R^d$ be open, for $\lambda\in \Lambda$ and let $\Omega=\bigcup_{\lambda\in\Lambda} \Omega_\lambda$. Then $\widetilde \Omega_c = \interl(\bigcup_{\lambda\in\Lambda} (\Omega_\lambda)\sptilde_c)$.
\end{lemma}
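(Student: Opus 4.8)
The plan is to prove the two inclusions separately. Recall that $\widetilde\Omega_c=\bigcup_{K\csub\Omega}\widetilde K$ and that $\interl(\cdot)$ consists of the finite sums $\sum_{j=1}^m e_{S_j}x_j$ over partitions $\{S_1,\dots,S_m\}$ of $(0,1)$, where $e_S=[(\kar S(\eps))_\eps]$ is the idempotent whose $\eps$-component is $1$ for $\eps\in S$ and $0$ otherwise. The inclusion $\supseteq$ is soft: given $\tilde x=\sum_{j=1}^m e_{S_j}\tilde x_j$ with $\tilde x_j\in(\Omega_{\lambda_j})\sptilde_c$, I would pick for each $j$ a compact $K_j\subseteq\Omega_{\lambda_j}$ and a representative $(x_{j,\eps})_\eps$ of $\tilde x_j$ with $x_{j,\eps}\in K_j$ for small $\eps$; then $(\sum_j\kar{S_j}(\eps)x_{j,\eps})_\eps$ represents $\tilde x$ and takes values in the compact set $K_1\cup\dots\cup K_m\subseteq\Omega$ for small $\eps$, so $\tilde x\in\widetilde{K_1\cup\dots\cup K_m}\subseteq\widetilde\Omega_c$.

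For the substantive inclusion $\subseteq$, I would start from $\tilde x\in\widetilde K$ with $K\csub\Omega$ compact, and first normalise a representative $(x_\eps)_\eps$: choose one with $x_\eps\in K$ for small $\eps$ and redefine $x_\eps$ to be a fixed point of $K$ for the remaining (large) $\eps$; this alters the net only on a set of $\eps$ bounded away from $0$, hence gives an equivalent net with $x_\eps\in K$ for \emph{all} $\eps\in(0,1)$. Next, using compactness of $K$ and the open cover $(\Omega_\lambda)_\lambda$, I would extract a finite subcover $K\subseteq\Omega_{\lambda_1}\cup\dots\cup\Omega_{\lambda_n}$ and shrink it to compact sets $K_i\subseteq\Omega_{\lambda_i}$ with $\bigcup_{i=1}^n K_i=K$ (for instance via a partition of unity subordinate to the subcover). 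Passing to the compact $K_i$ rather than staying with the $\Omega_{\lambda_i}$ is essential, since $x_\eps$ may approach $\partial\Omega_{\lambda_i}$ as $\eps\to 0$, so membership in $\Omega_{\lambda_i}$ alone would not give compact support.

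Then the interleaving is produced by hand: put $S_i:=\{\eps\in(0,1):x_\eps\in K_i\setminus(K_1\cup\dots\cup K_{i-1})\}$. Since $x_\eps\in\bigcup_i K_i$ for every $\eps$, the $S_i$ are pairwise disjoint with union $(0,1)$, hence form a partition of $(0,1)$ after discarding and relabelling any empty parts. For each surviving $i$ fix $a_i\in K_i$ and let $\tilde x_i$ be the class of the net equal to $x_\eps$ on $S_i$ and to $a_i$ off $S_i$; this net takes values in the compact set $K_i\subseteq\Omega_{\lambda_i}$, so $\tilde x_i\in\widetilde{K_i}\subseteq(\Omega_{\lambda_i})\sptilde_c$. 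Comparing $\eps$-components gives $\sum_i\kar{S_i}(\eps)x^{(i)}_\eps=x_\eps$ for every $\eps$, whence $\sum_i e_{S_i}\tilde x_i=[(x_\eps)_\eps]=\tilde x\in\interl(\bigcup_{\lambda\in\Lambda}(\Omega_\lambda)\sptilde_c)$. I expect the only real care needed to be this bookkeeping — arranging a representative supported in $K$ on all of $(0,1)$ and splitting $(0,1)$ along the compacta $K_i$ — together with the (entirely classical) shrinking of the finite subcover; everything else is routine.
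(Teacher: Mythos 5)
Your proof is correct and follows essentially the same strategy as the paper's: the easy inclusion $\supseteq$ is handled identically (absorb the finitely many compacta $K_j$ into one), and for $\subseteq$ both arguments extract a finite subcover of $K$ and split $(0,1)$ according to which member of a family of compacta, subordinate to the finite cover, contains $x_\eps$. The only small difference is how that family is produced — the paper proves a uniform distance-to-complement bound $d(x_\eps,\R^d\setminus\Omega_\lambda)\ge 1/N$ by a sequential-compactness contradiction, while you invoke the classical shrinking lemma for finite open covers of a compact set; both are equivalent compactness devices, and you are in fact somewhat more explicit than the paper about actually writing down the partition $\{S_i\}$ and the interleaved representatives.
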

\begin{proof}
$\subseteq$: let $\tilde x\in\widetilde \Omega_c$. There exists $K\csub\Omega$ such that $x\in\widetilde K$. As $K$ is compact, $K\subseteq \bigcup_{\lambda\in F} \Omega_m$, for some finite $F\subseteq \Lambda$. Let $\tilde x=[(x_\eps)_\eps]$ with $x_\eps\in K$, for each $\eps$. Then
\[(\exists N\in\N) (\exists \eps_0\in (0,1)) (\forall \eps\le\eps_0) (\exists \lambda\in F) (d(x_\eps, \R^d\setminus \Omega_\lambda)\ge 1/N),\]
since otherwise, we can construct a decreasing sequence $(\eps_n)_{n\in\N}$ tending to $0$ such that for each $n\in\N$ and $\lambda\in F$, $d(x_{\eps_n}, \R^d\setminus\Omega_\lambda) < 1/n$. As $K$ is compact, a subsequence $x_{\eps_{n_k}}$ would converge to $x_0\in K$. But then $x_0\in \overline{\R^d\setminus\Omega_\lambda} = \R^d\setminus\Omega_\lambda$, for each $\lambda\in F$, contradicting $K\subseteq \bigcup_{\lambda\in F} \Omega_\lambda$. Hence $\tilde x\in \interl(\bigcup_{\lambda\in F} (\Omega_\lambda)\sptilde_c)$.\\
$\supseteq$: let $\tilde x\in \interl(\bigcup_{\lambda\in \Lambda} (\Omega_\lambda)\sptilde_c)$, i.e., $\tilde x = \sum_{j=1}^M \tilde x_j e_{S_j}$, for some $M\in\N$, a partition $\{S_1,\dots, S_M\}$ of $(0,1)$ and $\tilde x_j\in \widetilde K_j$, for some $K_j\csub\Omega$. Then $K:=\bigcup_{j=1}^M K_j\csub \Omega$ and $\tilde x\in \widetilde K$.
\end{proof}

\begin{lemma}\label{lemma_pointwise_zero_internal}
Let $\emptyset\ne A\subseteq\GenR^d$ be internal and sharply bounded, $u=[(u_\eps)_\eps]\in\tGen(A)$. Let $(A_\eps)_\eps$ be a sharply bounded representative of $A$. Then $u(\tilde x)=0$, $\forall \tilde x\in A$ iff $(\sup_{x\in A_\eps}\abs{u_\eps(x)})_\eps\in\Null_\R$.
\end{lemma}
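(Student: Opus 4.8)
The plan is to prove the equivalence directly in both directions, using Proposition~\ref{prop_GenA_pointvalues}(1) to make sense of the point values $u(\tilde x)$ and the sharp boundedness of $(A_\eps)_\eps$ to manufacture generalized points out of $\eps$-wise data. Note that the statement is \emph{not} merely the $\alpha=0$ instance of Proposition~\ref{prop_NullA_internal}: when $A$ is internal but not open, $u(\tilde x)=0$ for all $\tilde x\in A$ need not force $u=0$ in $\tGen(A)$ (take $A=\{0\}$, $u_\eps(x)=x$), so only the zeroth-order supremum is being controlled here. The implication ``$\Leftarrow$'' I expect to be routine: given $\tilde x\in A$, by definition of the internal set $[(A_\eps)_\eps]$ I may pick a representative $(x_\eps)_\eps$ of $\tilde x$ with $x_\eps\in A_\eps$ for small $\eps$; then $\abs{u_\eps(x_\eps)}\le\sup_{x\in A_\eps}\abs{u_\eps(x)}$ for small $\eps$, which is negligible by hypothesis, so $(u_\eps(x_\eps))_\eps\in\Null_\C$ and hence $u(\tilde x)=0$ by Proposition~\ref{prop_GenA_pointvalues}(1).

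For ``$\Rightarrow$'' I would argue by contraposition. Assuming $(\sup_{x\in A_\eps}\abs{u_\eps(x)})_\eps\notin\Null_\R$, fix $m\in\N$ such that for every $\eta\in(0,1)$ there is $\eps\le\eta$ with $\sup_{x\in A_\eps}\abs{u_\eps(x)}>\eps^m$; since this forces $\abs{u_\eps(x)}>\eps^m$ for some $x\in A_\eps$, I obtain a strictly decreasing null sequence $(\eps_n)_n$ and points $x_{\eps_n}\in A_{\eps_n}$ with $\abs{u_{\eps_n}(x_{\eps_n})}>\eps_n^m$ for all $n$. Completing this to a net by choosing $x_\eps\in A_\eps$ arbitrarily for the remaining (small) $\eps$, sharp boundedness of $(A_\eps)_\eps$ makes $(x_\eps)_\eps$ moderate, so $\tilde x:=[(x_\eps)_\eps]\in A$. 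If $u(\tilde x)$ were $0$ then $\abs{u_{\eps_n}(x_{\eps_n})}\le\eps_n^{m+1}<\eps_n^m$ for all large $n$, a contradiction; hence $u$ fails to vanish at the point $\tilde x\in A$.

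The only step requiring any care is the one just sketched: converting the failure of negligibility of the $\eps$-indexed suprema into a single generalized point of $A$ at which $u$ does not vanish. This is exactly the subsequence-and-fill-in device already used in the proof of Lemma~\ref{lemma_moderate_uniform}, and sharp boundedness of the chosen representative is precisely what guarantees that the assembled net $(x_\eps)_\eps$ is moderate. I anticipate no genuine obstacle beyond bookkeeping.
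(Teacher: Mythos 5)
Your proof is correct and follows the same route as the paper: the $\Leftarrow$ direction is immediate from the definition of internal set plus Proposition~\ref{prop_GenA_pointvalues}(1), and the $\Rightarrow$ direction is by contraposition, extracting a null sequence $(\eps_n)$ with $x_{\eps_n}\in A_{\eps_n}$ and $\abs{u_{\eps_n}(x_{\eps_n})}>\eps_n^m$, filling in arbitrary $x_\eps\in A_\eps$ elsewhere, and using sharp boundedness of $(A_\eps)_\eps$ for moderateness of the assembled net. Your side remark (with the example $A=\{0\}$, $u_\eps(x)=x$) that this is genuinely weaker than the $\alpha=0$ case of Proposition~\ref{prop_NullA_internal} is accurate and a nice clarification, though not needed for the argument.
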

\begin{proof}
$\Rightarrow$: If the conclusion is not true, we find $m\in\N$, a decreasing sequence $(\eps_n)_{n\in\N}$ tending to $0$ and $x_{\eps_n}\in A_{\eps_n}$ such that $\abs{u_{\eps_n}(x_{\eps_n})}\ge \eps_n^m$, for each $n$. Let $x_\eps\in A_\eps$ arbitrary if $\eps\notin\{\eps_n: n\in\N\}$. As $(A_\eps)_\eps$ is sharply bounded, $\tilde x:=[(x_\eps)_\eps]\in A$, and $u(\tilde x)=0$ by assumption, contradicting $\abs{u_{\eps_n}(x_{\eps_n})}\ge \eps_n^m$, for each $n$.\\
$\Leftarrow$: clear.
\end{proof}

\begin{df}(cf.\ \cite{HV_pointchar})
Let $A\subseteq\GenR^d$. Then $\tGen^\infty(A)=\{u\in\tGen(A): (\forall \tilde x\in A) (\exists N\in\N) (\forall\alpha\in\N^d) (\abs{\partial^\alpha u(\tilde x)}\le\caninf^{-N})$.
\end{df}

\begin{prop}\label{prop_GeninftyA_internal}
Let $\emptyset\ne A\subseteq\GenR^d$ be internal and sharply bounded. Let $u=[(u_\eps)_\eps]\in\tGen(A)$. Let $(A_\eps)_\eps$ be a sharply bounded representative of $A$. Then $u\in\tGen^\infty(A)$ iff
\[
(\exists N\in\N) (\forall\alpha\in\N^d) \big(\sup_{x\in A_\eps}\abs{\partial^\alpha u_\eps(x)}\le\eps^{-N}, \text{ for small }\eps\big).
\]
\end{prop}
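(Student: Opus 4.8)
The plan is to prove the biconditional by the standard compactness-type argument that underlies Lemma~\ref{lemma_moderate_uniform} and its corollaries. The implication $\Leftarrow$ is immediate: if the uniform bound $\sup_{x\in A_\eps}\abs{\partial^\alpha u_\eps(x)}\le\eps^{-N}$ holds for small $\eps$ with $N$ independent of $\alpha$, then for every $\tilde x=[(x_\eps)_\eps]\in A$ (taking a representative with $x_\eps\in A_\eps$ for small $\eps$, which is possible by \cite[Prop.~2.1]{OV_internal} since $A$ is internal) we get $\abs{\partial^\alpha u_\eps(x_\eps)}\le\eps^{-N}$ for small $\eps$, and combined with the well-definedness of point values (proposition~\ref{prop_GenA_pointvalues}(1)) this gives $\abs{\partial^\alpha u(\tilde x)}\le\caninf^{-N}$ for all $\alpha$; hence $u\in\tGen^\infty(A)$ with the same $N$ working for every $\tilde x\in A$.

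For the implication $\Rightarrow$ I would argue by contraposition. Suppose the right-hand side fails, i.e.\ for every $N\in\N$ there exists $\alpha_N\in\N^d$ such that for arbitrarily small $\eps$ one has $\sup_{x\in A_\eps}\abs{\partial^{\alpha_N} u_\eps(x)}>\eps^{-N}$. Then one can extract a strictly decreasing sequence $(\eps_n)_{n\in\N}$ tending to $0$ and points $x_{\eps_n}\in A_{\eps_n}$ with $\abs{\partial^{\alpha_n} u_{\eps_n}(x_{\eps_n})}>\eps_n^{-n}$. Completing $(x_\eps)_\eps$ to a full net by choosing $x_\eps\in A_\eps$ arbitrarily for $\eps\notin\{\eps_n:n\in\N\}$, the net $(x_\eps)_\eps$ is moderate because $(A_\eps)_\eps$ is sharply bounded, so $\tilde x:=[(x_\eps)_\eps]\in A$. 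Now I use the characterization from corollary~\ref{cor_EModA_internal}: since $u\in\tGen(A)\subseteq\EMod(A)$, for each fixed $\alpha$ there is $N_\alpha\in\N$ with $\sup_{x\in A_\eps+\eps^{N_\alpha}}\abs{\partial^\alpha u_\eps(x)}\le\eps^{-N_\alpha}$ for small $\eps$; in particular $\abs{\partial^\alpha u_\eps(x_\eps)}\le\eps^{-N_\alpha}$ for small $\eps$, so $(\partial^\alpha u_\eps(x_\eps))_\eps\in\Mod_\C$ and $\partial^\alpha u(\tilde x)=[(\partial^\alpha u_\eps(x_\eps))_\eps]$ is well defined for every $\alpha$. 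But for $\alpha=\alpha_n$ we have $\abs{\partial^{\alpha_n} u_{\eps_n}(x_{\eps_n})}>\eps_n^{-n}$ for all $n$, which prevents any single $N\in\N$ from satisfying $\abs{\partial^{\alpha_n} u(\tilde x)}\le\caninf^{-N}$ for all $n$; hence $\tilde x$ witnesses the failure of $u\in\tGen^\infty(A)$.

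The main subtlety, as in the proof of lemma~\ref{lemma_moderate_uniform}, is the bookkeeping with quantifiers when extracting the sequences: one must be careful that the $\alpha_n$ (which may differ from index to index) and the $\eps_n$ are chosen so that the final point $\tilde x$ simultaneously defeats \emph{every} candidate bound $\caninf^{-N}$. This is handled by the diagonal choice $\abs{\partial^{\alpha_n}u_{\eps_n}(x_{\eps_n})}>\eps_n^{-n}$, with $n$ playing the role both of the index into the sequence and of the growing exponent. A second point to state explicitly is that picking a representative of $\tilde x$ lying in $A_\eps$ for small $\eps$ is legitimate precisely because $A$ is internal, and that moderateness of this representative is exactly where sharp boundedness of $(A_\eps)_\eps$ is used; without it the constructed $\tilde x$ might not lie in $\GenR^d$ at all. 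Everything else is routine and parallels the earlier propositions in this section.
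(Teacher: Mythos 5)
The $\Leftarrow$ direction is fine, but there is a genuine gap in your $\Rightarrow$ direction, and it is precisely the quantifier subtlety that you flag but then do not actually handle.

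You negate the uniform bound and extract a single decreasing sequence $(\eps_n)_{n\in\N}\to 0$, with $x_{\eps_n}\in A_{\eps_n}$ and $\abs{\partial^{\alpha_n} u_{\eps_n}(x_{\eps_n})}>\eps_n^{-n}$. For each multi-index $\alpha_n$ you thus have exactly \emph{one} value $\eps_n$ at which the bound is violated. Now suppose, towards a contradiction, that $u\in\tGen^\infty(A)$ with bound $N$, so $\abs{\partial^{\alpha_n} u(\tilde x)}\le\caninf^{-N}$ for every $n$. This is an inequality in $\GenR$: it says that for some threshold $\eta_n$ (which \emph{may depend on $\alpha_n$}), $\abs{\partial^{\alpha_n} u_\eps(x_\eps)}\le\eps^{-N}+\eps$ for all $\eps<\eta_n$. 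A single exceptional value $\eps_n$ does not contradict this unless you happen to have $\eps_n<\eta_n$, and you have no control over $\eta_n$ because it comes from the hypothesis you are trying to refute. To refute $\abs{\partial^{\alpha_n} u(\tilde x)}\le\caninf^{-N}$ for a fixed $\alpha_n$, you need the violation $\abs{\partial^{\alpha_n} u_\eps(x_\eps)}>\eps^{-n}$ to occur along a sequence of $\eps$'s \emph{tending to zero}, not at a single $\eps_n$.

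This is exactly why the paper introduces the doubly indexed family $\eps_{n,m}\in(0,1/m)$, all distinct, with $x_{\eps_{n,m}}\in A_{\eps_{n,m}}$ and $\abs{\partial^{\alpha_n} u_{\eps_{n,m}}(x_{\eps_{n,m}})}>\eps_{n,m}^{-n}$ for all $n,m$. For a fixed $n$, the sequence $(\eps_{n,m})_m$ tends to $0$, so the violation for the \emph{same} $\alpha_n$ is available for arbitrarily small $\eps$, below any threshold $\eta_n$; this genuinely refutes $\abs{\partial^{\alpha_n} u(\tilde x)}\le\caninf^{-N}$ once $n>N$. Your ``diagonal choice'' gives a growing exponent, which is needed, but it does not by itself supply the second index $m$ that forces the violation to persist down to $\eps\to 0$ for each fixed multi-index. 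To repair your proof, replace the single sequence $(\eps_n)_n$ by such a doubly indexed family (or, equivalently, for each $n$ choose an entire sequence of $\eps$'s tending to $0$ realizing the violation for $\alpha_n$, and interleave them so the corresponding points can all be packed into one representative $(x_\eps)_\eps$).
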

\begin{proof}
$\Rightarrow$: (cf.\ \cite[Prop.~5.3]{HV_pointchar}).
Supposing the conclusion is not true, we find $\alpha_n\in\N^d$ (for each $n\in\N$), $\eps_{n,m}\in (0,1/m)$ (for each $n,m\in\N$) (by enumerating the countable family $(\eps_{n,m})_{n,m}$, we can successively choose the $\eps_{n,m}$ in such a way that they are all different) and $x_{\eps_{n,m}}\in  A_{\eps_{n,m}}$ with $\abs{\partial^{\alpha_n}u_{\eps_{n,m}}(x_{\eps_{n,m}})} > \eps_{n,m}^{-n}$, $\forall n,m\in\N$. Choose $x_\eps \in A_\eps$ arbitrary, if $\eps\notin\{\eps_{n,m}: n,m\in\N\}$ is sufficiently small ($A_\eps\ne\emptyset$ for small $\eps$ since $A\ne \emptyset$). Then $\tilde x=[(x_\eps)_\eps]\in A$ (moderateness is guaranteed since $(A_\eps)_\eps$ is sharply bounded). By hypothesis, there exists $N\in\N$ such that for each $\alpha\in\N^d$, $\abs{\partial^\alpha u(\tilde x)}\le\caninf^{-N}$. This contradicts the fact that for a fixed $n>N$, $\lim_{m\to\infty}\eps_{n,m}=0$ and $\abs{\partial^{\alpha_n} u_{\eps_{n,m}}(x_{\eps_{n,m}})}> \eps_{n,m}^{-n}$, $\forall m\in\N$.\\
$\Leftarrow$: clear.
\end{proof}

\begin{prop}\label{prop_composition}
Let $A\subseteq \GenR^d$ and $u=[(u_\eps)_\eps]\in\tGen(A)$. Let $u(A)=\{u(\tilde x): \tilde x\in A\}\subseteq B\subseteq \GenC$. Let $v=[(v_\eps)_\eps]\in\tGen(B)$. Then $v\comp u:=[(v_\eps\comp u_\eps)_\eps]\in\tGen(A)$.
\end{prop}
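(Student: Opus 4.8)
The plan is to verify that $(v_\eps\comp u_\eps)_\eps$ lies in $\EMod(A)$, which automatically gives a well-defined element of $\tGen(A)$ once we check that the class is independent of the chosen representatives. Fix $\tilde x=[(x_\eps)_\eps]\in A$ and $\alpha\in\N^d$. By the Fa\`a di Bruno formula, $\partial^\alpha(v_\eps\comp u_\eps)(x_\eps)$ is a finite universal polynomial (depending only on $\alpha$ and $d$) in the quantities $(\partial^\beta v_\eps)(u_\eps(x_\eps))$ for $\abs\beta\le\abs\alpha$ and $\partial^\gamma u_\eps(x_\eps)$ for $\abs\gamma\le\abs\alpha$. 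Since $(u_\eps)_\eps\in\EMod(A)$, each net $(\partial^\gamma u_\eps(x_\eps))_\eps$ is in $\Mod_\C$; and since $u(\tilde x)=[(u_\eps(x_\eps))_\eps]\in u(A)\subseteq B$ with $(v_\eps)_\eps\in\EMod(B)$, each net $((\partial^\beta v_\eps)(u_\eps(x_\eps)))_\eps$ is in $\Mod_\C$ as well. As $\Mod_\C$ is a ring, the polynomial combination is moderate, so $(\partial^\alpha(v_\eps\comp u_\eps)(x_\eps))_\eps\in\Mod_\C$. This holds for every $\alpha$ and every representative of every point of $A$, hence $(v_\eps\comp u_\eps)_\eps\in\EMod(A)$.

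It remains to check that the resulting element of $\tGen(A)$ does not depend on the representatives $(u_\eps)_\eps$, $(v_\eps)_\eps$. Suppose $(u'_\eps)_\eps$ is another representative of $u$, i.e.\ $(u_\eps-u'_\eps)_\eps\in\Null(A)$, and $(v'_\eps)_\eps$ another representative of $v$. Write
\[
v_\eps\comp u_\eps - v'_\eps\comp u'_\eps = (v_\eps - v'_\eps)\comp u_\eps + \big(v'_\eps\comp u_\eps - v'_\eps\comp u'_\eps\big).
\]
For the first term, fix $\tilde x=[(x_\eps)_\eps]\in A$ and $\alpha\in\N^d$; Fa\`a di Bruno again expresses $\partial^\alpha\big((v_\eps-v'_\eps)\comp u_\eps\big)(x_\eps)$ as a polynomial in $(\partial^\beta(v_\eps-v'_\eps))(u_\eps(x_\eps))$ and $\partial^\gamma u_\eps(x_\eps)$; since $u_\eps(x_\eps)$ is a representative of $u(\tilde x)\in B$ and $(v_\eps-v'_\eps)_\eps\in\Null(B)$, the factors of the first kind are negligible while the others are moderate, so the whole expression is negligible. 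For the second term one uses a Taylor/mean-value estimate: $\partial^\alpha\big(v'_\eps\comp u_\eps-v'_\eps\comp u'_\eps\big)(x_\eps)$ is, by Fa\`a di Bruno applied to the difference, a polynomial in derivatives of $v'_\eps$ evaluated along the segment between $u_\eps(x_\eps)$ and $u'_\eps(x_\eps)$ (moderate, by Corollary \ref{cor_EModA_internal} applied to the internal sharply bounded set $\{u(\tilde x)\}$, which controls $v'_\eps$ on a small ball around $u_\eps(x_\eps)$) and in the nets $\partial^\gamma u_\eps(x_\eps)$, $\partial^\gamma u'_\eps(x_\eps)$ and $\partial^\gamma(u_\eps-u'_\eps)(x_\eps)$, with at least one factor of the last type in each monomial. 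As that last type is negligible and the rest moderate, the term is negligible. Hence $(v_\eps\comp u_\eps - v'_\eps\comp u'_\eps)_\eps\in\Null(A)$.

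The main obstacle is the second term in the representative-independence step: one must control $\partial^\beta v'_\eps$ not merely at the point $u_\eps(x_\eps)$ but on a whole segment joining $u_\eps(x_\eps)$ to $u'_\eps(x_\eps)$ (or, more simply, on a sharp ball of radius $\caninf^N$ around $u(\tilde x)$), which is exactly what Corollary \ref{cor_EModA_internal} provides, since $\{u(\tilde x)\}$ is internal and sharply bounded and $u_\eps(x_\eps)-u'_\eps(x_\eps)$ is negligible, hence eventually of size $\le\eps^N$. Once this uniform bound on the $v'_\eps$-derivatives near $u(\tilde x)$ is in hand, the estimate is a routine Taylor expansion and the Fa\`a di Bruno bookkeeping is purely combinatorial. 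Finally, the pointwise identity $(v\comp u)(\tilde x)=v(u(\tilde x))$ is immediate from Proposition \ref{prop_GenA_pointvalues}(1) applied to both $u$ on $A$ and $v$ on $B$.
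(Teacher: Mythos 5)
Your proof is correct and follows essentially the same route as the paper: moderateness via the chain rule (your Fa\`a di Bruno bookkeeping) using that the compositions $\partial^\beta v_\eps(u_\eps(x_\eps))$ are moderate because $u(\tilde x)\in B$, and representative independence via a Taylor/mean-value estimate controlled by Corollary~\ref{cor_EModA_internal} applied to the singleton $\{u(\tilde x)\}$. The paper phrases the higher-order part slightly more economically---since $\tGen(A)$, $\tGen(B)$ are closed under $\partial$, the first-order chain rule reduces all higher-order estimates to the zeroth-order composition case---but this is the same chain-rule induction you carry out explicitly.
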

\begin{proof}
By definition, $(v_\eps\comp u_\eps)_\eps\in\Cnt[\infty](\R^d)$. For each $\alpha\in\N^d$ and $[(x_\eps)_\eps]\in A$, $(\partial^\alpha u_\eps(x_\eps))_\eps\in\Mod_\C$. As $[(u_\eps(x_\eps))_\eps]\in u(A)\subseteq B$, also $(\partial^\alpha v_\eps(u_\eps(x_\eps)))_\eps\in\Mod_\C$. The moderateness-conditions follow inductively by the chain rule. Similarly, one sees that the definition does not depend on the representative of $v$. Independence of the representative of $u$: the estimates for $0$-th order derivatives follow as in \cite[Prop.~1.2.6]{GKOS} by corollary \ref{cor_EModA_internal} (applied to a singleton). Since $\tGen(A), \tGen(B)$ are closed under partial derivatives, the chain rule reduces the estimates for the higher order derivatives to the $0$-th order ones.
\end{proof}

\begin{prop}\label{prop_invertible}
Let $\emptyset \ne A\subseteq\GenR^d$ be internal and sharply bounded. Let $u=[(u_\eps)_\eps]\in\tGen(A)$. The following are equivalent for a sharply bounded representative $(A_{\eps})_\eps$ of $A$:
\begin{enumerate}
\item there exists $v\in\tGen(A)$ such that $uv=1$
\item for each $\tilde x\in A$, $u(\tilde x)$ is invertible in $\GenC$
\item $(\exists \eps_0 \in (0,1))$ $(\exists n\in\N)$ $(\forall\eps\le\eps_0)$ $(\inf_{x\in A_\eps + \eps^n} \abs{u_\eps(x)} \ge \eps^n)$.
\end{enumerate}
\end{prop}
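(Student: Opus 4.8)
The plan is to prove the cycle $(3)\implies(1)\implies(2)\implies(3)$. The implication $(1)\implies(2)$ is immediate: if $uv=1$ in $\tGen(A)$, then evaluating at any $\tilde x\in A$ (using Proposition~\ref{prop_GenA_pointvalues}(1), which gives $(uv)(\tilde x)=u(\tilde x)v(\tilde x)$) shows $u(\tilde x)v(\tilde x)=1$ in $\GenC$, so $u(\tilde x)$ is invertible with inverse $v(\tilde x)$.

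For $(3)\implies(1)$, I would start from the hypothesis, fixing $\eps_0$ and $n$ so that $\abs{u_\eps(x)}\ge\eps^n$ for all $x\in A_\eps+\eps^n$ and $\eps\le\eps_0$. The first step is to build a smooth net $v_\eps$ that behaves like $1/u_\eps$ on a slightly shrunk neighbourhood of $A_\eps$: pick a cut-off $\chi_\eps\in\Cnt[\infty](\R^d)$ with $\chi_\eps=1$ on $A_\eps+\tfrac12\eps^n$ and $\supp\chi_\eps\subseteq A_\eps+\eps^n$ (obtained, say, by convolving the characteristic function of $A_\eps+\tfrac34\eps^n$ with a mollifier of scale $\sim\eps^n$, exactly as in the proof of Proposition~\ref{prop_sheaf}), and set $v_\eps:=\chi_\eps/u_\eps$ on the set where $u_\eps\ne0$ and $v_\eps:=0$ elsewhere; this is smooth on all of $\R^d$ since $\chi_\eps$ vanishes near $\{u_\eps=0\}\cap(A_\eps+\eps^n)$. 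The second step is to check $(v_\eps)_\eps\in\EMod(A)$ via Corollary~\ref{cor_EModA_internal}: on $A_\eps+\eps^N$ (for $N\ge n$ large) one has $\abs{v_\eps}\le\eps^{-n}$, and the derivative estimates follow inductively from the Leibniz and quotient rules, using that $\partial^\alpha\chi_\eps$ is bounded by negative powers of $\eps$ and that $(u_\eps)_\eps\in\EMod(A)$ together with the lower bound $\abs{u_\eps}\ge\eps^n$ on the support of $\chi_\eps$. Finally, on a representative $(x_\eps)_\eps$ of any $\tilde x\in A$ one has $x_\eps\in A_\eps+\eps^n$ for small $\eps$, hence $\chi_\eps(x_\eps)=1$ and $(u_\eps v_\eps)(x_\eps)=1$, so $uv=1$ in $\tGen(A)$ by Proposition~\ref{prop_GenA_pointvalues}(1) (or directly since $u_\eps v_\eps-1$ vanishes on $A_\eps+\tfrac12\eps^n$, invoking Proposition~\ref{prop_NullA_internal}).

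For $(2)\implies(3)$, I would argue by contraposition, negating $(3)$: for every $\eps_0$ and $n$ there is $\eps\le\eps_0$ and a point $x\in A_\eps+\eps^n$ with $\abs{u_\eps(x)}<\eps^n$. Proceeding as in Lemma~\ref{lemma_moderate_uniform}, this lets me extract a strictly decreasing sequence $\eps_n\to0$ and points $x_{\eps_n}\in A_{\eps_n}+\eps_n^n$ with $\abs{u_{\eps_n}(x_{\eps_n})}<\eps_n^n$; filling in $x_\eps\in A_\eps$ arbitrarily for the remaining $\eps$ and using that $(A_\eps)_\eps$ is sharply bounded, I get $\tilde x:=[(x_\eps)_\eps]\in A$ (the distances $d(x_\eps,A_\eps)$ being negligible, so membership follows from \cite{OV_internal}). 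Then $u(\tilde x)=[(u_\eps(x_\eps))_\eps]$ has a representative that is not bounded below by any positive power of $\eps$ along a sequence, so $u(\tilde x)$ is not invertible in $\GenC$ (by the standard invertibility criterion for Colombeau numbers), contradicting $(2)$.

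The main obstacle is the smoothness and moderateness bookkeeping in $(3)\implies(1)$: one must choose the mollification scale for $\chi_\eps$ so that $\supp\chi_\eps$ stays inside the region $A_\eps+\eps^n$ where the lower bound $\abs{u_\eps}\ge\eps^n$ is available, while simultaneously keeping the derivatives of $\chi_\eps$ polynomially bounded in $\eps^{-1}$; then the inductive quotient-rule estimate for $\partial^\alpha v_\eps$ produces a polynomial blow-up in $\eps^{-1}$ whose exponent grows with $\abs\alpha$, which is exactly what Corollary~\ref{cor_EModA_internal} permits. Everything else is a routine translation between the $\eps$-wise bounds and point values already set up in the preceding results.
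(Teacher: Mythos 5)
Your proposal is correct and follows essentially the same route as the paper's: the same cycle of implications (just re-ordered), the same contrapositive extraction of a sequence $\eps_n\to0$ with $x_{\eps_n}\in A_{\eps_n}+\eps_n^n$ for $(2)\Rightarrow(3)$, and the same cut-off construction of $v_\eps$ agreeing with $1/u_\eps$ on a shrunk neighbourhood of $A_\eps$ for $(3)\Rightarrow(1)$. The only cosmetic differences are your choice of shrinkage ($A_\eps+\tfrac12\eps^n$ versus the paper's $A_\eps+\eps^{n+1}$) and your use of Leibniz/quotient-rule bookkeeping where the paper directly cites the form of $\partial^\alpha(1/u_\eps)$ as combinations of $\prod_\beta\partial^\beta u_\eps/u_\eps^{\abs\alpha+1}$.
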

\begin{proof}
$(1)\Rightarrow(2)$: for $\tilde x\in A$, $u(\tilde x)v(\tilde x)=1$ in $\GenC$.\\
$(2)\Rightarrow(3)$: supposing that the conclusion is not true, we find a decreasing sequence $(\eps_n)_{n\in\N}$ tending to $0$ and $x_{\eps_n}\in A_{\eps_n} + \eps_n^n$ and $\abs[]{u_{\eps_n}(x_{\eps_n})}<\eps_n^n$, for each $n\in\N$. Let $x_\eps\in A_{\eps}$, for small $\eps\notin\{\eps_n: n\in\N\}$. As $(A_\eps)_\eps$ is sharply bounded, $\tilde x:=[(x_\eps)_\eps]\in A$, but $u(\tilde x)$ is not invertible in $\GenC$ by \cite[Thm.\ 1.2.38]{GKOS}.\\
$(3)\Rightarrow(1)$: using a cut-off function, we find $v_\eps\in\Cnt[\infty](\R^d)$ with $v_\eps(x)=\inv{u_\eps(x)}$, for $x\in A_{\eps} + \eps^{n+1}$ and $\eps\le\eps_0$. Since each $\partial^\alpha v_\eps(x)$ is a linear combination (with coefficients indep.\ of $\eps$) of $\prod_{\beta}\partial^\beta u_\eps(x)/u_\eps^{\abs\alpha + 1}(x)$ (finite products) for $x\in A_\eps + \eps^{n+1}$ and $\eps\le\eps_0$, 
$v:=[(v_\eps)_\eps]\in\tGen(A)$. As $u_\eps(x) v_\eps(x) - 1 = 0$, for each $x\in A_\eps+\eps^{n+1}$ and $\eps\le\eps_0$, we have $uv=1$ in $\tGen(A)$.
\end{proof}

\begin{cor}\label{cor_invertible}
Let $\Omega = \bigcup_{n\in\N} A_n\ne\emptyset$, where $(A_n)_{n\in\N}$ is an increasing sequence of internal subsets of $\GenR^d$ such that $A_{n+1}$ is a neighbourhood of $A_n$, for each $n$. Let $u\in\tGen(\Omega)$. The following are equivalent:
\begin{enumerate}
\item there exists $v\in\tGen(\Omega)$ such that $uv=1$
\item for each $\tilde x\in \Omega$, $u(\tilde x)$ is invertible in $\GenC$
\item for each $m\in\N$, $\restr{u}{A_m}\in\tGen(A_m)$ has a multiplicative inverse.
\end{enumerate}
\end{cor}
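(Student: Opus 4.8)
The plan is to establish the cycle $(1)\Rightarrow(3)\Rightarrow(2)\Rightarrow(1)$. The first two implications are immediate. If $v\in\tGen(\Omega)$ satisfies $uv=1$, then for each $m$ the restriction $\restr{v}{A_m}$ is a multiplicative inverse of $\restr{u}{A_m}$ in $\tGen(A_m)$, which is $(3)$. Conversely, if $\restr{u}{A_m}$ has an inverse $w\in\tGen(A_m)$, then evaluating $(\restr{u}{A_m})\,w=1$ at an arbitrary $\tilde x\in A_m$ shows $u(\tilde x)$ is invertible in $\GenC$; since $\Omega=\bigcup_m A_m$, this gives $(2)$.

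The substance is in $(2)\Rightarrow(1)$. First I would reduce to the case that every $A_n$ is sharply bounded, by replacing $A_n$ with $A_n\cap\overline B(0,\caninf^{-n})$ and discarding the finitely many initial empty terms. This does not change $\Omega$, because $\sharpnorm{\tilde x}<\infty$ for every $\tilde x\in\Omega$; the new sets are internal and sharply bounded (\cite[Lemma 2.4]{OV_internal}); and $A_{n+1}\cap\overline B(0,\caninf^{-(n+1)})$ is still a neighbourhood of $A_n\cap\overline B(0,\caninf^{-n})$, since for $\tilde x$ in the latter set a sharp ball $B(\tilde x,\delta)$ with $\delta\le\caninf^{-n}$ lies in $A_{n+1}$ (as $A_{n+1}$ is a neighbourhood of $A_n$) and, by the ultrametric inequality together with $\caninf^{-n}\le\caninf^{-(n+1)}$, automatically lies in $\overline B(0,\caninf^{-(n+1)})$. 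As conditions $(1)$ and $(2)$ refer only to $\Omega$, they are unaffected by this replacement, so we may assume all $A_n$ internal and sharply bounded.

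For each $n$, condition $(2)$ and the equivalence $(1)\Leftrightarrow(2)$ of Proposition \ref{prop_invertible}, applied to $A_n$, provide $v_n\in\tGen(A_n)$ with $(\restr{u}{A_n})\,v_n=1$. Uniqueness of multiplicative inverses forces $\restr{v_{n+1}}{A_n}=v_n$ for all $n$, so by the generalized sheaf property (Proposition \ref{prop_sheaf}(1)) there is $v\in\tGen(\Omega)$ with $\restr{v}{A_n}=v_n$ for all $n$. Then $\restr{(uv-1)}{A_n}=0$ for each $n$, hence $(uv-1)(\tilde x)=0$ for every $\tilde x\in\bigcup_n A_n=\Omega$; since $\Omega$ is open, Proposition \ref{prop_GenA_pointvalues}(3) yields $uv=1$, i.e.\ $(1)$.

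I expect the sharp-boundedness reduction to be the only delicate point: one must check that intersecting with the closed balls $\overline B(0,\caninf^{-n})$ preserves the structure ``increasing sequence of internal sets, each a neighbourhood of the previous one'', and this rests on the ultrametric fact that a ball of radius at most $\caninf^{-n}$ around a point of norm at most $\caninf^{-n}$ is contained in $\overline B(0,\caninf^{-(n+1)})$. With that reduction in place, the remainder is a direct combination of Propositions \ref{prop_invertible}, \ref{prop_sheaf} and \ref{prop_GenA_pointvalues}(3).
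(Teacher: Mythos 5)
Your proof is correct and takes essentially the same route as the paper: replace the $A_n$ by sharply bounded internal sets, invoke Proposition~\ref{prop_invertible} on each to obtain compatible local inverses $v_n$, glue them via Proposition~\ref{prop_sheaf}(1), and conclude $uv=1$ from the pointwise identity using Proposition~\ref{prop_GenA_pointvalues}(3). The only difference is organizational (the paper first establishes $(1)\Leftrightarrow(3)$ and then deduces $(2)\Leftrightarrow(3)$ for the sharply bounded exhaustion, rather than running the cycle $(1)\Rightarrow(3)\Rightarrow(2)\Rightarrow(1)$), and your explicit check that $A_n\cap\overline B(0,\caninf^{-n})$ preserves the increasing-neighbourhood structure spells out a step that the paper leaves implicit.
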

\begin{proof}
$(1)\implies(3)$: by restriction.\\
$(3)\Rightarrow (1)$: let $v_m\in \tGen(A_m)$ such that $\restr{u}{A_m} v_m = 1$ in $\tGen(A_m)$, for each $m$. Then $v_m \cdot \restr{u}{A_m} \cdot \restr{v_{m+1}}{A_m} = \restr{v_{m+1}}{A_m} = v_m$, for each $m$. By proposition \ref{prop_sheaf}, there exists a unique $v\in\tGen(\Omega)$ with $\restr{v}{A_m}=v_m$, for each $m\in\N$. In particular, $u(\tilde x)v(\tilde x)=1$, for each $\tilde x\in \Omega$. Since $\Omega$ is open, $uv=1$ by proposition \ref{prop_GenA_pointvalues}.\\
Since $(1)\Leftrightarrow(3)$, property $(3)$ is independent of the choice of $A_m$ with $\Omega = \bigcup_m A_m$. As $\Omega = \bigcup_m (A_m\cap B(0,\caninf^m))$, $(2)\Leftrightarrow (3)$ follows by proposition \ref{prop_invertible}.
\end{proof}

\section{Analyticity on $\tGen(\GenC)$}
\begin{df}
Let $A$ be an open subset of $\GenC$. We let $\tGen_H(A)$ be the differential algebra consisting of those $u\in\tGen(A)$ with $\bar \partial u = \frac{1}{2}(\partial_x + i \partial_y) u = 0$. Let $A\subseteq B\subseteq\GenC$. We say that $u\in\tGen(B)$ is holomorphic in $A$ iff $\restr{u}{A}\in\tGen_H(A)$, i.e, iff $\bar\partial u (\tilde z) = 0$, for each $\tilde z\in A$.\\
For $u\in\tGen_H(A)$ and $\tilde z\in A$, we write $u'(\tilde z) = \partial_x u (\tilde z)=-i\partial_y u(\tilde z)$. Iterated derivatives are denoted by $D^k$ ($k\in\N$).
\end{df}
Clearly, every polynomial with coefficients in $\GenC$ (i.e., every element of $\GenC[z]$) belongs to $\tGen_H(\GenC)$.

\begin{lemma}\label{lemma_tGen_H_by_representatives}
For each $\eps\in(0,1)$, let $\Omega_\eps\subseteq\R^d$ be open, let $N\in\N$ and let $u_\eps$: $\Omega_\eps\to B(0,\eps^{-N})\subseteq\C$ be holomorphic. Let $B\subseteq \bigcup_{m\in\N} [(\{z\in\C: d(z,\C\setminus\Omega_\eps)\ge\eps^m\})_\eps]$ be open. Then $u:=[(u_\eps)_\eps]\in\tGen_H(B)$.
\end{lemma}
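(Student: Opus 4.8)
The plan is to combine Lemma \ref{lemma_tGen_by_representatives} (which tells us that $u:=[(u_\eps)_\eps]$ defines an element of $\tGen(B)$, once we verify the growth condition on the derivatives) with the fact that each $u_\eps$, being holomorphic on $\Omega_\eps$, satisfies $\bar\partial u_\eps = 0$ on $\Omega_\eps$ identically. First I would check the hypotheses of Lemma \ref{lemma_tGen_by_representatives}. With $A_{m,\eps} = \{x\in\C: d(x,\C\setminus\Omega_\eps)\ge\eps^m\}$ and $A = \bigcup_{m}[(A_{m,\eps})_\eps]$, I need: for each $\alpha$ and each $m$, a bound $\sup_{x\in A_{m,\eps},\abs x\le\eps^{-m}}\abs{\partial^\alpha u_\eps(x)}\le\eps^{-N'}$ for small $\eps$. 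Since $u_\eps$ is uniformly bounded by $\eps^{-N}$ on all of $\Omega_\eps$, the Cauchy estimates on the disc of radius $\tfrac12\eps^{m+1}$ centered at any point of $A_{m+1,\eps}$ (such a disc is contained in $\Omega_\eps$) give $\abs{\partial^\alpha u_\eps(x)}\le C_\alpha\,\eps^{-N}(\tfrac12\eps^{m+1})^{-\abs\alpha}$ for $x\in A_{m+1,\eps}$; replacing $m$ by $m-1$, this is the required polynomial bound. So Lemma \ref{lemma_tGen_by_representatives} applies and yields a unique $u\in\tGen(A)$ with $\partial^\alpha u(\tilde z) = [(\partial^\alpha u_\eps(z_\eps))_\eps]$ for every $\tilde z\in A$ and every representative. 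Since $B\subseteq A$ is open, restriction gives $u\in\tGen(B)$ (using the convention stated after Lemma \ref{lemma_tGen_by_representatives}).

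Next I would verify holomorphy on $B$. For any $\tilde z = [(z_\eps)_\eps]\in B$, pick a representative with $z_\eps\in A_{m,\eps}$ for small $\eps$ (possible since $B\subseteq A$); then
\[
\bar\partial u(\tilde z) = \bigl[\bigl(\tfrac12(\partial_x+i\partial_y)u_\eps(z_\eps)\bigr)_\eps\bigr] = [(0)_\eps] = 0,
\]
because each $u_\eps$ is holomorphic on $\Omega_\eps\supseteq A_{m,\eps}$ and hence satisfies the Cauchy--Riemann equations there. Thus $\bar\partial u(\tilde z)=0$ for every $\tilde z\in B$, i.e.\ $\restr{u}{B}\in\tGen_H(B)$ by the definition of holomorphy in $B$ and by Proposition \ref{prop_GenA_pointvalues}(3) applied to $\bar\partial u$ on the open set $B$.

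The only real point requiring care is the passage from the single sup-bound $\abs{u_\eps}\le\eps^{-N}$ on $\Omega_\eps$ to bounds on all derivatives on the slightly shrunken sets $A_{m,\eps}$; this is exactly where the Cauchy estimates (and the nesting $A_{m+1,\eps}+\tfrac12\eps^{m+1}\subseteq A_{m,\eps}$, which holds since a point at distance $\ge\eps^{m+1}$ from $\C\setminus\Omega_\eps$ stays at distance $\ge\tfrac12\eps^{m+1}$, in fact $\ge\tfrac12\eps^{m}$ is false but $\ge \eps^{m+1}-\tfrac12\eps^{m+1}=\tfrac12\eps^{m+1}\ge\eps^{m+2}$ suffices after reindexing) does the work. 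Everything else — well-definedness, independence of representatives, the restriction to $B$ — is handled verbatim by Lemma \ref{lemma_tGen_by_representatives} and Proposition \ref{prop_GenA_pointvalues}. No genuine obstacle arises; the proof is essentially a two-line application once the Cauchy estimate is invoked.
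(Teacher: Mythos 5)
Your proposal is correct and follows essentially the same route as the paper: reduce to Lemma \ref{lemma_tGen_by_representatives} by deriving the derivative bounds on the $A_{m,\eps}$ from the uniform bound $\abs{u_\eps}\le\eps^{-N}$ via Cauchy estimates on discs of radius $O(\eps^{m+1})$ contained in $\Omega_\eps$. The only cosmetic difference is that you spell out the Cauchy--Riemann check $\bar\partial u(\tilde z)=0$ at the end, whereas the paper leaves this implicit since $\bar\partial u_\eps\equiv 0$ on $\Omega_\eps$.
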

\begin{proof}
By lemma \ref{lemma_tGen_by_representatives}, it is sufficient to show that for each $m,k\in\N$, there exists $N\in\N$ such that $\sup_{d(z,\C\setminus\Omega_\eps)\ge\eps^m, \abs{z}\le\eps^{-m}} \abs{D^k u_\eps(z)}\le\eps^{-N}$, for small $\eps$. This follows by the Cauchy estimate $\abs{D^k u_\eps(z)}\le k! \eps^{-(m+1)k}\sup_{\partial B(z,\eps^{m+1})}\abs{u_\eps(z)}$ for $z\in\C$ with $d(z,\C\setminus\Omega_\eps)\ge\eps^m$.
\end{proof}
\begin{rem}
As in lemma \ref{lemma_tGen_by_representatives}, it is sufficient that $u_\eps$ satisfy: for each $m\in\N$, there exists $N\in\N$ such that $u_\eps$: $\{z\in\C: d(z,\C\setminus \Omega_\eps)\ge\eps^m$ and $\abs z\le \eps^{-m}\} \to B(0,\eps^{-N})$ is holomorphic.
\end{rem}

\begin{lemma}
Let $\Omega\subseteq\GenC$ be open. Let $u\in\tGen_H(\Omega)$.
\begin{enumerate}
\item If $1/u\in \tGen(\Omega)$ exists, $1/u\in \tGen_H(\Omega)$.
\item Let $\Omega = \bigcup_{n\in\N} A_n$, where $(A_n)_{n\in\N}$ is an increasing sequence of internal subsets of $\GenR^d$ such that $A_{n+1}$ is a neighbourhood of $A_n$, for each $n$. If $u(\tilde z)$ is invertible, for each $\tilde z\in \Omega$, then $1/u\in\tGen_H(\Omega)$.
\end{enumerate}
\end{lemma}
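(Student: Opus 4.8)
The plan is to reduce both statements to the fact, already available from Proposition~\ref{prop_invertible} and its corollary, that an inverse $v=1/u$ exists \emph{at the level of representatives} on a slightly shrunk internal family, together with the $\eps$-wise Cauchy--Riemann equations. The key observation is that holomorphy is a \emph{pointwise}, $\eps$-wise differential condition: $u\in\tGen_H(\Omega)$ means $\bar\partial u(\tilde z)=0$ for each $\tilde z\in\Omega$, and by Proposition~\ref{prop_GenA_pointvalues}(1) this is tested through $(\bar\partial u_\eps(x_\eps))_\eps\in\Null_\C$ for representatives. So if I can produce a representative $(v_\eps)_\eps$ of $1/u$ which is \emph{exactly} holomorphic (not merely approximately) on the relevant sets, I am done immediately, because then $\bar\partial v_\eps=0$ identically there, hence $\bar\partial(1/u)(\tilde z)=0$ for every $\tilde z\in\Omega$.

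For part~(1): assume $1/u\in\tGen(\Omega)$ exists. By Corollary~\ref{cor_invertible} (applied with any exhausting sequence, e.g.\ $\Omega=\bigcup_m\big(\{z:\abs{z}\le\caninf^m\}\cap\Omega\big)$ once we know $\Omega$ has such a structure; in fact it suffices to work on each internal sharply bounded piece), the restriction of $u$ to each internal sharply bounded $A\subseteq\Omega$ is invertible, so by Proposition~\ref{prop_invertible}(3) there are $\eps_0,n$ with $\inf_{x\in A_\eps+\eps^n}\abs{u_\eps(x)}\ge\eps^n$ for $\eps\le\eps_0$. Here I use that $u$ itself has a representative $(u_\eps)_\eps$ with each $u_\eps$ holomorphic on a neighbourhood of $A_\eps$: this comes from Lemma~\ref{lemma_tGen_H_by_representatives}/its remark, i.e.\ an element of $\tGen_H$ on an open set admits representatives by genuinely holomorphic $u_\eps$ on sets $\{z:d(z,\C\setminus\Omega_\eps)\ge\eps^m,\ \abs z\le\eps^{-m}\}$. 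On the open set where $\abs{u_\eps}\ge\eps^n$ the function $1/u_\eps$ is holomorphic, and after multiplying by a cut-off (as in the proof of Proposition~\ref{prop_invertible}, $(3)\Rightarrow(1)$) we get $v_\eps\in\Cnt[\infty](\R^d)$ with $v_\eps=1/u_\eps$, hence $\bar\partial v_\eps=0$, on $A_\eps+\eps^{n+1}$. By the uniqueness clause in Lemma~\ref{lemma_tGen_by_representatives}, $[(v_\eps)_\eps]=1/u$ on $A$, and $\bar\partial(1/u)(\tilde z)=[(\bar\partial v_\eps(x_\eps))_\eps]=0$ for all $\tilde z\in A$. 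Since $\Omega$ is a union of such internal pieces, $1/u\in\tGen_H(\Omega)$ by the sheaf property (Proposition~\ref{prop_sheaf}) — or more simply because holomorphy at each $\tilde z\in\Omega$ is checked locally.

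For part~(2): by hypothesis $u(\tilde z)$ is invertible for each $\tilde z\in\Omega$, so by Corollary~\ref{cor_invertible} the inverse $1/u\in\tGen(\Omega)$ exists; now apply part~(1). The only point requiring care is that the representative $(u_\eps)_\eps$ chosen to witness invertibility via Proposition~\ref{prop_invertible}(3) can simultaneously be taken with $u_\eps$ holomorphic near $A_{n,\eps}$; but since $u\in\tGen_H(\Omega)$ we may fix such a representative once and for all (by the remark after Lemma~\ref{lemma_tGen_H_by_representatives}), and Proposition~\ref{prop_invertible} applies to \emph{that} representative. Thus no genuine obstacle arises; the main thing to keep honest is that the cut-off modification used to pass from $1/u_\eps$ (holomorphic only where $u_\eps$ is large) to a globally $\Cnt[\infty]$ representative does not destroy holomorphy on the set $A_\eps+\eps^{n+1}$ where the cut-off equals $1$, which is exactly how it was arranged in Proposition~\ref{prop_invertible}.
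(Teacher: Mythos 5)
Your proof of part~(2) matches the paper exactly: pointwise invertibility plus Corollary~\ref{cor_invertible} gives $1/u\in\tGen(\Omega)$, then part~(1) applies. For part~(1), however, you have taken a heavy analytic detour where the paper uses a one-line algebraic argument: since $\bar\partial$ obeys the Leibniz rule, $0=\bar\partial(uv)=(\bar\partial u)v+u\,\bar\partial v=u\,\bar\partial v$; multiplying by $v$ (using $uv=1$ again) gives $\bar\partial v=0$. No representatives, cut-offs, or holomorphic approximants are needed at all.

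Beyond being unnecessarily elaborate, your route contains a concrete error. You assert that ``$u$ itself has a representative $(u_\eps)_\eps$ with each $u_\eps$ holomorphic on a neighbourhood of $A_\eps$'' and attribute this to Lemma~\ref{lemma_tGen_H_by_representatives} and its remark. That lemma goes in the \emph{opposite} direction: it shows that a net of genuinely holomorphic $u_\eps$ with suitable bounds \emph{produces} an element of $\tGen_H(B)$; it does not say that every $u\in\tGen_H(B)$ admits such a representative. The statement you actually need is Proposition~\ref{prop_analytic_repres} (Analytic representatives), which appears at the very end of the paper and whose proof is a nontrivial Cauchy--Green construction. Invoking it here is not strictly circular (its proof does not rely on the present lemma), but it is a substantial result that your argument silently leans on under the wrong citation. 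If you insist on this route you would also need to argue that the lower bound from Proposition~\ref{prop_invertible}(3) is available for that same analytic representative (this is in fact automatic, since (3) is equivalent to the representative-independent condition (2)), and that the internal sharply bounded pieces on which you work sit \emph{well inside} $\Omega$ so that Proposition~\ref{prop_analytic_repres} applies. All of this machinery is avoided by observing that $\bar\partial$ is a derivation and computing directly in $\tGen(\Omega)$.
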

\begin{proof}
(1) If $uv=1$ and $\bar\partial u = 0$, then $0 = \bar\partial(uv) = u \cdot \bar\partial v$.\\
(2) By corollary \ref{cor_invertible} and part~1.
\end{proof}

\begin{lemma}\label{lemma_composition_analytic}
Let $A\subseteq \GenC$ and $u=[(u_\eps)_\eps]\in\tGen_H(A)$. Let $u(A)=\{u(\tilde z): \tilde z\in A\}\subseteq B$. Let $v=[(v_\eps)_\eps]\in\tGen_H(B)$. Then $v\comp u:=[(v_\eps\comp u_\eps)_\eps]\in\tGen_H(A)$.
\end{lemma}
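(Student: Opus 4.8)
The plan is to reduce to Proposition~\ref{prop_composition}, which already gives $v\comp u\in\tGen(A)$, and then to verify the one extra condition for membership in $\tGen_H(A)$, namely $\bar\partial(v\comp u)=0$ in $\tGen(A)$. Since $A$ is open (which is part of the hypothesis, as $\tGen_H(A)$ is only defined for open $A$), Proposition~\ref{prop_GenA_pointvalues}(3) reduces this to showing $\bar\partial(v\comp u)(\tilde z)=0$ for every $\tilde z\in A$.

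So I would fix $\tilde z\in A$ together with an arbitrary representative $(x_\eps)_\eps$, and apply the Wirtinger chain rule $\eps$-wise. Writing $\partial=\frac12(\partial_x-i\partial_y)$ for the complementary Wirtinger derivative and using $\partial_{\bar z}\bar u_\eps=\overline{\partial_z u_\eps}$, one gets, for each $\eps$,
\[
\bar\partial(v_\eps\comp u_\eps)=\bigl((\partial v_\eps)\comp u_\eps\bigr)\cdot\bar\partial u_\eps+\bigl((\bar\partial v_\eps)\comp u_\eps\bigr)\cdot\overline{\partial u_\eps}.
\]
Then I evaluate the four factors at $x_\eps$: from $u\in\tGen(A)$, $(\partial u_\eps(x_\eps))_\eps\in\Mod_\C$; from $u\in\tGen_H(A)$, $(\bar\partial u_\eps(x_\eps))_\eps\in\Null_\C$. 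By Proposition~\ref{prop_GenA_pointvalues}(1), $(u_\eps(x_\eps))_\eps$ represents $u(\tilde z)\in u(A)\subseteq B$, so from $v\in\tGen(B)$, $(\partial v_\eps(u_\eps(x_\eps)))_\eps\in\Mod_\C$, and from $v\in\tGen_H(B)$, i.e.\ $\bar\partial v=0$ in $\tGen(B)$, $(\bar\partial v_\eps(u_\eps(x_\eps)))_\eps\in\Null_\C$. (Only the $\partial^\alpha$-estimates with $\abs{\alpha}\le 1$ built into membership in $\tGen$, resp.\ $\tGen_H$, are used, and $\partial$, $\bar\partial$ are fixed $\C$-linear combinations of $\partial_x$, $\partial_y$, so these estimates transfer.) Hence each summand above, at $x_\eps$, is a product of a moderate and a negligible net, so $(\bar\partial(v_\eps\comp u_\eps)(x_\eps))_\eps\in\Null_\C$, i.e.\ $\bar\partial(v\comp u)(\tilde z)=0$. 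As $\tilde z$ was arbitrary, Proposition~\ref{prop_GenA_pointvalues}(3) gives $\bar\partial(v\comp u)=0$ in $\tGen(A)$, hence $v\comp u\in\tGen_H(A)$.

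I do not expect a genuine obstacle. The only point needing care --- and the reason the statement is not immediate --- is that the representatives $u_\eps$, $v_\eps$ are not honestly holomorphic maps $\R^2\to\C$; they are only ``holomorphic at the generalized points of $A$, resp.\ $B$'', so the classical fact that a composition of holomorphic functions is holomorphic cannot be invoked directly. The $\eps$-wise chain rule together with the pointwise characterizations of $\tGen(A)$ and $\tGen_H(A)$ is exactly the needed substitute.
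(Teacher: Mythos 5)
Your proof is correct and follows essentially the same route as the paper: both invoke Proposition~\ref{prop_composition} for membership in $\tGen(A)$ and then verify $\bar\partial(v\comp u)(\tilde z)=0$ pointwise via the chain rule and Proposition~\ref{prop_GenA_pointvalues}(3). The only cosmetic difference is that you write the chain rule in Wirtinger form (splitting $\bar\partial(v\comp u)$ into a $\bar\partial u$ term and a $\bar\partial v$ term, each a moderate--times--negligible product), whereas the paper uses the real chain rule in $\partial_x,\partial_y$ and then applies $\bar\partial v=0$ and $\bar\partial u=0$ in succession to collapse the expression to $v'(u)\,\bar\partial u=0$.
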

\begin{proof}
By propositon \ref{prop_composition}, $v\comp u\in\tGen(A)$ and for $\tilde z\in A$, $\bar\partial u(\tilde z) = \bar\partial v(u(\tilde z))=0$. Hence $\bar\partial (v\comp u)(\tilde z) = \partial_x v(u(\tilde z))\bar\partial \Re u(\tilde z) + \partial_y v(u(\tilde z))\bar\partial \Im u(\tilde z) = v'(u(\tilde z)) \bar\partial u(\tilde z) = 0$, for each $\tilde z\in A$.
\end{proof}

\begin{df}\label{df_path}
Let $\Cnt[1]_{pw}([0,1])$ be the space of those $u\in\Cnt[0]([0,1])$ that are piecewise $\Cnt[1]$, provided with the norm $\max\{\sup_{x\in[0,1]}\abs{u(x)},\sup_{x\in[0,1]\text{ a.e.}}\abs{u'(x)}\}$.
We call a \defstyle{path} in $\GenC$ an element of $\Gen_{\Cnt[1]_{pw}([0,1])}$.
\end{df}
So for a representative $(\gamma_\eps)_\eps$ of a path $\gamma$, we have $\gamma_\eps\in\Cnt[1]_{pw}([0,1])$, $\forall\eps$, and the nets \[\big(\sup\nolimits_{t\in[0,1]}\abs{\gamma_\eps(t)}\big)_\eps\quad \text{and}\quad \big(\sup\nolimits_{t\in[0,1]\text{ a.e.}}\abs{\gamma'_\eps(t)}\big)_\eps\]
are both moderate. Since
\[
\abs{\gamma_\eps(t_\eps)-\gamma_\eps(t'_\eps)} =\abs[\bigg]{\int_{t_\eps}^{t'_\eps} \gamma_\eps'(t)\,dt}\le \abs{t_\eps-t'_\eps}\sup_{t\in [0,1]\text{ a.e}}\abs{\gamma_\eps'(t)},
\]
generalized point values $\gamma(\tilde t)$ are well-defined, for each $\tilde t\in[0,1]\sptilde$. On the other hand, if $\gamma(\tilde t)=\tilde\gamma(\tilde t)$, for each $\tilde t\in[0,1]\sptilde$, then the paths $\gamma,\tilde\gamma$ need not be equal (e.g., they can have different curve length). If $A\subseteq\GenC$ and $\gamma(\tilde t)\in A$, for each $\tilde t\in [0,1]\sptilde$, we call $\gamma$ a path in $A$.

\begin{prop}\label{prop_path_integral_well_def}
Let $A\subseteq\GenC$, $u\in\tGen(A)$ and $\gamma$ a path in $A$. Then \[\int_\gamma u(z)\,dz:=\Big[\Big(\int_{\gamma_\eps} u_\eps(z)\,dz\Big)\Big]\in\GenC\]
is well-defined (independent of representatives of $u$ and $\gamma$).
Moreover, if $\gamma$, $\tilde\gamma$ are paths in $A$ such that $\gamma(\tilde t)=\tilde\gamma(\tilde t)$, for each $\tilde t\in[0,1]\sptilde$, then $\int_{\gamma} u(z)\,dz = \int_{\tilde\gamma} u(z)\,dz$.
\end{prop}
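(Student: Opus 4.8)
The plan is to transfer everything to the internal, sharply bounded subset of $A$ swept out by the path, on which Corollary \ref{cor_EModA_internal} supplies uniform moderateness estimates. Fix a representative $(\gamma_\eps)_\eps$ of $\gamma$ and put $K_\eps:=\gamma_\eps([0,1])\subseteq\C$ and $B:=[(K_\eps)_\eps]$. Since $\big(\sup_{t\in[0,1]}|\gamma_\eps(t)|\big)_\eps$ is moderate, $(K_\eps)_\eps$ is a sharply bounded representative and $B$ is internal and sharply bounded. Moreover $B\subseteq A$: if $\tilde z=[(z_\eps)_\eps]\in B$, then for small $\eps$ we have $z_\eps=\gamma_\eps(t_\eps)$ for some $t_\eps\in[0,1]$; completing $(t_\eps)_\eps$ arbitrarily yields $\tilde t\in[0,1]\sptilde$ with $\gamma(\tilde t)=\tilde z$, and $\gamma(\tilde t)\in A$ because $\gamma$ is a path in $A$. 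Hence $u|_B=[(u_\eps)_\eps]\in\tGen(B)$, and by Corollary \ref{cor_EModA_internal} (applied for $\alpha=0,(1,0),(0,1)$ and taking $N$ larger than the three resulting exponents) there is $N\in\N$ with $\sup_{x\in K_\eps+\eps^N}|\partial^\alpha u_\eps(x)|\le\eps^{-N}$ for small $\eps$ whenever $|\alpha|\le1$.

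With this $N$, moderateness and independence of the representative of $u$ are immediate: $\big|\int_{\gamma_\eps}u_\eps(z)\,dz\big|=\big|\int_0^1 u_\eps(\gamma_\eps(t))\gamma_\eps'(t)\,dt\big|\le\eps^{-N}\sup_{t\,\mathrm{a.e.}}|\gamma_\eps'(t)|$ is moderate, and if $(u_\eps-w_\eps)_\eps\in\Null(A)$ then $\big(\sup_{x\in K_\eps}|u_\eps(x)-w_\eps(x)|\big)_\eps\in\Null_\R$ by Proposition \ref{prop_NullA_internal}, so $\big|\int_{\gamma_\eps}(u_\eps-w_\eps)(z)\,dz\big|\le\sup_{x\in K_\eps}|u_\eps(x)-w_\eps(x)|\cdot\sup_{t\,\mathrm{a.e.}}|\gamma_\eps'(t)|\in\Null_\R$.

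Both independence of the representative of $\gamma$ and the final assertion follow from a single claim: \emph{if $(\beta_\eps)_\eps$ is a moderate net in $\Cnt[1]_{pw}([0,1])$ with $\big(\sup_t|\gamma_\eps(t)-\beta_\eps(t)|\big)_\eps\in\Null_\R$, then $\big(\int_{\gamma_\eps}u_\eps(z)\,dz-\int_{\beta_\eps}u_\eps(z)\,dz\big)_\eps\in\Null_\C$.} Writing $\sigma_\eps:=\beta_\eps-\gamma_\eps$, one splits
\[\int_{\beta_\eps}u_\eps-\int_{\gamma_\eps}u_\eps=\int_0^1\big(u_\eps(\beta_\eps(t))-u_\eps(\gamma_\eps(t))\big)\beta_\eps'(t)\,dt+\int_0^1 u_\eps(\gamma_\eps(t))\,\sigma_\eps'(t)\,dt.\]
For the first integral, for small $\eps$ the segment $[\gamma_\eps(t),\beta_\eps(t)]$ lies in $K_\eps+\eps^N$, so the mean value theorem gives $|u_\eps(\beta_\eps(t))-u_\eps(\gamma_\eps(t))|\le C\eps^{-N}|\sigma_\eps(t)|$ and hence a bound $C\eps^{-N}\sup_t|\sigma_\eps(t)|\cdot\sup_{t\,\mathrm{a.e.}}|\beta_\eps'(t)|\in\Null_\R$. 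For the second integral, integrating by parts (legitimate since $\gamma_\eps,\beta_\eps$ are piecewise $\Cnt[1]$) turns it into $\big[u_\eps(\gamma_\eps(t))\sigma_\eps(t)\big]_{0}^{1}-\int_0^1\big(\nabla u_\eps(\gamma_\eps(t))\cdot\gamma_\eps'(t)\big)\sigma_\eps(t)\,dt$, whose boundary term is $\le\eps^{-N}\sup_t|\sigma_\eps(t)|$ and whose integral is $\le C\eps^{-N}\sup_{t\,\mathrm{a.e.}}|\gamma_\eps'(t)|\cdot\sup_t|\sigma_\eps(t)|$, both in $\Null_\R$. This proves the claim. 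Taking $(\beta_\eps)_\eps$ to be a second representative of $\gamma$ (where even the $\Cnt[1]_{pw}$-norm difference is negligible) gives independence of the representative of $\gamma$; for the final assertion one first checks, by the argument of Lemma \ref{lemma_pointwise_zero_internal}, that $\gamma(\tilde t)=\tilde\gamma(\tilde t)$ for all $\tilde t\in[0,1]\sptilde$ forces $\big(\sup_t|\gamma_\eps(t)-\tilde\gamma_\eps(t)|\big)_\eps\in\Null_\R$, and then applies the claim with $\beta_\eps=\tilde\gamma_\eps$.

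The main obstacle is this last assertion. For two representatives of the same path the derivatives are negligibly close and $\int_0^1 u_\eps(\gamma_\eps)\sigma_\eps'\,dt$ is directly small; but for paths that merely agree at all generalized parameters, $\gamma_\eps'$ and $\tilde\gamma_\eps'$ can differ wildly (the curves may have very different lengths), and $\int_0^1 u_\eps(\gamma_\eps)\sigma_\eps'\,dt$ is a priori only a product of moderate quantities. The integration by parts — trading the uncontrolled $\sigma_\eps'$ for the negligible $\sigma_\eps$ at the cost of the extra factor $\nabla u_\eps(\gamma_\eps)\cdot\gamma_\eps'$, which is harmless on $K_\eps$ — is exactly what resolves this.
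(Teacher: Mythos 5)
Your proof is correct, and it takes a genuinely different technical route from the paper's. The paper proves the independence-of-$\gamma$ and the final assertion at one stroke by a Green's/Stokes-type identity applied to the affine homotopy $H_\eps(t,s)=\gamma_\eps(t)+s\sigma_\eps(t)$: integrating the identity
$\partial_t\big((u_\eps\comp H_\eps)\partial_s H_\eps\big)-\partial_s\big((u_\eps\comp H_\eps)\partial_t H_\eps\big)= i\big((\partial_x+i\partial_y)u_\eps\comp H_\eps\big)(\partial_t H_{1,\eps}\partial_s H_{2,\eps}-\partial_s H_{1,\eps}\partial_t H_{2,\eps})$
over $[0,1]^2$ expresses $\int_{\tilde\gamma_\eps}u_\eps-\int_{\gamma_\eps}u_\eps$ as a boundary term in $s$ plus a double integral involving $\bar\partial u_\eps$, and every term carries a factor of $\partial_s H_\eps=\sigma_\eps$, which is negligible. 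You stay entirely in one variable, decomposing the difference into a piece controlled by the mean value inequality and a piece $\int_0^1 u_\eps(\gamma_\eps)\sigma_\eps'\,dt$ that you tame by integrating by parts in $t$, thereby trading the uncontrolled $\sigma_\eps'$ for the negligible $\sigma_\eps$ against the moderate $\nabla u_\eps(\gamma_\eps)\cdot\gamma_\eps'$. Both proofs hinge on the same two observations — that $\sup_t|\gamma_\eps-\tilde\gamma_\eps|$ is negligible while $\gamma_\eps'-\tilde\gamma_\eps'$ need not be, and that the derivative must therefore be moved off $\sigma_\eps$ — and your integration by parts is exactly the one-dimensional shadow of the paper's two-dimensional Stokes argument. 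What the paper's version buys is direct reuse: the same identity with a general homotopy $H$ yields Proposition \ref{prop_Cauchy_thm} (where $\partial_s H_\eps$ is no longer negligible and the negligibility must instead come from $\bar\partial u_\eps$), whereas your 1D shortcut is tailored to the present affine case. What yours buys is a more elementary argument, and you also make explicit — via the argument of Lemma \ref{lemma_pointwise_zero_internal} — the step the paper leaves tacit: that $\gamma(\tilde t)=\tilde\gamma(\tilde t)$ for all $\tilde t\in[0,1]\sptilde$ forces $\big(\sup_t\abs{\gamma_\eps(t)-\tilde\gamma_\eps(t)}\big)_\eps\in\Null_\R$.
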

\begin{proof}
Let $\gamma=[(\gamma_\eps)_\eps]$. Since $u\in\tGen(A)$ and
$[(\gamma_\eps([0,1]))_\eps]=\{\gamma(\tilde t): \tilde t\in [0,1]\sptilde\}\subseteq A$, corollary \ref{cor_EModA_internal} implies that $\sup_{z\in \gamma_\eps([0,1])}\abs{u_\eps(z)}\in\Mod_\R$. Hence $(\int_{\gamma_\eps}u_\eps(z)\,dz)_\eps\in\Mod_\C$. Independence of the representative of $u$ follows similarly by proposition \ref{prop_NullA_internal}.

To prove independence of the representative of $\gamma$, we use an argument similar to Green's theorem. 
More generally, let $\gamma=[(\gamma_\eps)_\eps]$ and $\tilde\gamma=[(\tilde\gamma_\eps)_\eps]$ as in the statement of the theorem. Let $\eps\in(0,1)$. Let $[a_\eps,b_\eps]\subseteq[0,1]$ such that $\gamma_\eps$, $\tilde\gamma_\eps\in\Cnt[1]([a_\eps,b_\eps])$. Consider the homotopy $H_\eps$: $[0,1]\times[0,1]\to\C$: $H_\eps(t,s)=\gamma_\eps(t)+ s (\tilde \gamma_\eps(t)-\gamma_\eps(t))$.
As 
\begin{multline*}
\partial_t \big((u_\eps\comp H_\eps)\cdot\partial_s H_\eps\big) - \partial_s \big((u_\eps\comp H_\eps)\cdot\partial_t H_\eps\big)\\
=i\big(((\partial_x + i\partial_y)u_\eps)\comp H_\eps\big)\cdot (\partial_t H_{1,\eps}\partial_s H_{2,\eps} - \partial_s H_{1,\eps}\partial_t H_{2,\eps}),
\end{multline*}
integration over $[0,1]^2$ (via integration over different $[a_\eps,b_\eps]\times[0,1]$ and summation), yields
\begin{multline*}
\int_{\tilde\gamma_\eps} u_\eps(z)\,dz - \int_{\gamma_\eps} u_\eps(z)\,dz = \int_0^1 [(u_\eps\comp H_\eps)(t,s) \partial_s H_\eps(t,s)]_{t=0}^{t=1}\,ds\\
- i\iint_{[0,1]^2} \big((\partial_x + i\partial_y) u_\eps\big)(H_\eps(t,s))\cdot \big(\partial_t H_{1,\eps} \partial_s H_{2,\eps}
- \partial_s H_{1,\eps}\partial_t H_{2,\eps}\big)(t,s)\,dt\,ds.
\end{multline*}
Since $u\in\tGen(A)$ and
\[
[(H_\eps([0,1]^2))_\eps]=\{H(\tilde t, \tilde s): \tilde t, \tilde s\in [0,1]\sptilde\}=\{\gamma(\tilde t): \tilde t\in [0,1]\sptilde\}\subseteq A,
\]
corollary \ref{cor_EModA_internal} implies that $\sup_{z\in H_\eps([0,1]^2)}\abs{\partial^\alpha u_\eps(z)}\in\Mod_\R$, for each $\alpha\in\N^d$.
The moderateness of $\partial_t H_\eps$ and the negligibility of $\partial_s H_\eps(t,s)= \tilde\gamma_{\eps}(t)-\gamma_{\eps}(t)$ then yield the required negligibility.
\end{proof}
Since $\Gen(\Omega)=\tGen(\widetilde\Omega_c)$ and a c-bounded (cf.\ \cite[Def. 1.2.7]{GKOS}) path in $\Omega$ is a path in $\widetilde\Omega_c$, we immediately obtain the following corollary.
\begin{cor}
If $u\in\Gen(\Omega)$ and $\gamma$ is a c-bounded path in $\Omega$, 
then $\int_\gamma u(z)\,dz$ is well-defined.
\end{cor}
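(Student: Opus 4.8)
The plan is to reduce the statement to Proposition~\ref{prop_path_integral_well_def} with $A=\widetilde\Omega_c$, using the identification $\Gen(\Omega)=\tGen(\widetilde\Omega_c)$ of Corollary~\ref{cor_Gen_Omega_is_tGen_Omega_c}. First I would recall that, as in the proof of that corollary, every $u\in\Gen(\Omega)$ admits, after a cut-off, a representative $(u_\eps)_\eps\in\Cnt[\infty](\R^2)^{(0,1)}$, so that $u$ may be regarded as an element of $\tGen(\widetilde\Omega_c)$; by Proposition~\ref{prop_GenA_pointvalues}(1) its point values $[(u_\eps(x_\eps))_\eps]$ on $\widetilde\Omega_c$ are the classical generalized point values, so that the integral produced below coincides with the one defined $\eps$-wise from any representative of $u$ in $\Cnt[\infty](\Omega)^{(0,1)}$.

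Next I would check that a c-bounded path $\gamma$ in $\Omega$ is a path in $A:=\widetilde\Omega_c$ in the sense of Definition~\ref{df_path} and the discussion following it. By c-boundedness, $\gamma$ has a representative $(\gamma_\eps)_\eps$ with $\gamma_\eps\in\Cnt[1]_{pw}([0,1])$ for each $\eps$, with $\big(\sup_{t\in[0,1]}\abs{\gamma_\eps(t)}\big)_\eps$ and $\big(\sup_{t\in[0,1]\text{ a.e.}}\abs{\gamma_\eps'(t)}\big)_\eps$ moderate, and with a compact set $K\csub\Omega$ such that $\gamma_\eps([0,1])\subseteq K$ for small $\eps$. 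Moderateness of these two sup-nets says precisely that $\gamma\in\Gen_{\Cnt[1]_{pw}([0,1])}$ is a path in $\GenC$; and for any $\tilde t=[(t_\eps)_\eps]\in[0,1]\sptilde$ one has $\gamma_\eps(t_\eps)\in K$ for small $\eps$, hence $\gamma(\tilde t)=[(\gamma_\eps(t_\eps))_\eps]\in\widetilde K\subseteq\widetilde\Omega_c$. Therefore $\{\gamma(\tilde t):\tilde t\in[0,1]\sptilde\}\subseteq A$, i.e.\ $\gamma$ is a path in $A$.

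Finally I would apply Proposition~\ref{prop_path_integral_well_def} to this $A$, $u$ and $\gamma$: it yields that $\int_\gamma u(z)\,dz=[(\int_{\gamma_\eps}u_\eps(z)\,dz)_\eps]\in\GenC$ is well-defined, independently of the representatives of $u$ and of $\gamma$ (the independence from the representative of $\gamma$ being exactly what the Green's-theorem-type computation in that proof provides). I do not expect a genuine obstacle here: the only point requiring attention is the translation between the notion of a c-bounded path in $\Omega$ and the present notion of a path in $\widetilde\Omega_c$ — concretely, that a suitable representative of $\gamma$ has its range in a single fixed compact $K\csub\Omega$ for small $\eps$ — and this, together with $\widetilde K\subseteq\widetilde\Omega_c$, is immediate from the definition of c-boundedness.
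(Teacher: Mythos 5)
Your proposal is correct and follows exactly the route the paper sketches: identify $\Gen(\Omega)$ with $\tGen(\widetilde\Omega_c)$ via Corollary~\ref{cor_Gen_Omega_is_tGen_Omega_c}, observe that a c-bounded path in $\Omega$ is a path in $\widetilde\Omega_c$, and then invoke Proposition~\ref{prop_path_integral_well_def}. You have merely spelled out in detail what the paper states in one sentence.
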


\begin{df}
Let $\Cnt[1]_{pw}([0,1]^2)$ be the space of those $u\in\Cnt[0]([0,1]^2)$ for which there exists a partition $(a_j)_{j=0}^n$ of $[0,1]$ with $u\in\Cnt[1]([a_{i-1},a_i]\times[a_{j-1},a_j])$, for $i,j=1$, \dots, $n$, provided with the norm $\max\{\sup_{x\in[0,1]^2}\abs{u(x)}, \sup_{x\in[0,1]^2\text{ a.e.}}\abs{\nabla u(x)}\}$. As for paths, one sees that for $u\in\Gen_{\Cnt[1]_{pw}([0,1]^2)}$, generalized point values are well defined (for each $(\tilde t,\tilde s)\in([0,1]\sptilde)^2$).\\
Let $\gamma$, 
$\tilde\gamma$ 
be two closed (i.e., $\gamma(0)=\gamma(1)$) paths in $\GenC$. We call $H\in\Gen_{\Cnt[1]_{pw}([0,1]^2)}$ a \defstyle{homotopy} between $\gamma$ and $\tilde\gamma$ if $H(\tilde t,0)=\gamma(\tilde t)$, $H(\tilde t,1)=\tilde\gamma(t)$, $\forall\tilde t\in[0,1]\sptilde$ and $H(0,\tilde s)=H(1,\tilde s)$, $\forall\tilde s\in[0,1]\sptilde$.
$H$ is called a homotopy \defstyle{in $A\subseteq\GenC$} if $H(\tilde t,\tilde s)\in A$, for each $\tilde t,\tilde s\in [0,1]\sptilde$.\\
If each two closed paths in $A$ are homotopic in $A$, we call $A$ \defstyle{simply connected}.
\end{df}
\begin{ex}
Let $A\subseteq\GenC$ be (pointwise) convex, i.e., for each $\tilde z_1,\tilde z_2\in A$ and $\tilde t\in [0,1]\sptilde$, $t z_1 + (1-t) z_2\in A$. Then $A$ is simply connected.
\end{ex}
\begin{proof}
Let $\gamma=[(\gamma_\eps)_\eps]$, $\tilde\gamma=[(\tilde\gamma_\eps)_\eps]$ are two closed paths in $A$, the homotopy defined by $H_\eps(t,s)=\gamma_\eps(t) + s (\tilde \gamma_\eps(t) - \gamma_\eps(t))$ is a homotopy in $A$ between $\gamma$ and $\tilde\gamma$.
\end{proof}

\begin{prop}\label{prop_Cauchy_thm}
Let $A\subseteq\GenC$ be open, $u\in\tGen_H(A)$ and $\gamma$, $\tilde \gamma$ two closed paths, homotopic in $A$.
Then 
\[\oint_{\gamma}u(z)\,dz=\oint_{\tilde\gamma}u(z)\,dz.\]
\end{prop}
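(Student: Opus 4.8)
The plan is to run the Green's-theorem computation from the proof of Proposition~\ref{prop_path_integral_well_def}, now with a general homotopy $H$ realizing $\gamma\simeq\tilde\gamma$ in $A$. The two new points are that the area integral which that computation produces is negligible because $\bar\partial u$ vanishes on the (internal, sharply bounded) trace of $H$, and that the boundary contribution coming from the two ``vertical'' edges of $H$ --- absent in Proposition~\ref{prop_path_integral_well_def} --- is negligible because $H(0,\cdot)$ and $H(1,\cdot)$ agree as maps on $[0,1]\sptilde$.

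I would fix a representative $H=[(H_\eps)_\eps]$ of a homotopy in $A$ between $\gamma$ and $\tilde\gamma$, a representative $(u_\eps)_\eps$ of $u$, and read off the four edge paths $\gamma'_\eps:=H_\eps(\cdot,1)$, $\gamma''_\eps:=H_\eps(\cdot,0)$, $\delta_\eps:=H_\eps(1,\cdot)$, $\beta_\eps:=H_\eps(0,\cdot)$. Restricting to an edge the partition of $[0,1]^2$ witnessing $H_\eps\in\Cnt[1]_{pw}([0,1]^2)$ shows that these all lie in $\Cnt[1]_{pw}([0,1])$ with moderate bounds inherited from $H$, so $\gamma',\gamma'',\delta,\beta$ are paths in $\GenC$; and since their point values are among the values $H(\tilde t,\tilde s)$, $\tilde t,\tilde s\in[0,1]\sptilde$, they are paths in $A$. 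The homotopy conditions give the equalities $\gamma''(\tilde t)=\gamma(\tilde t)$, $\gamma'(\tilde t)=\tilde\gamma(\tilde t)$ ($\tilde t\in[0,1]\sptilde$) and $\delta(\tilde s)=H(1,\tilde s)=H(0,\tilde s)=\beta(\tilde s)$ ($\tilde s\in[0,1]\sptilde$) in $\GenC$; by the ``moreover'' clause of Proposition~\ref{prop_path_integral_well_def} it therefore suffices to prove $\oint_{\gamma'}u(z)\,dz=\oint_{\gamma''}u(z)\,dz$ in $\GenC$, because then
\[
\oint_{\gamma}u(z)\,dz=\oint_{\gamma''}u(z)\,dz=\oint_{\gamma'}u(z)\,dz=\oint_{\tilde\gamma}u(z)\,dz;
\]
the same proposition also gives $\int_{\delta}u(z)\,dz=\int_{\beta}u(z)\,dz$, which I use below.

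Then I would argue on representatives, as in Proposition~\ref{prop_path_integral_well_def}: from the pointwise identity
\[
\partial_t\big((u_\eps\comp H_\eps)\partial_s H_\eps\big)-\partial_s\big((u_\eps\comp H_\eps)\partial_t H_\eps\big)=i\big(((\partial_x+i\partial_y)u_\eps)\comp H_\eps\big)\cdot J_\eps,
\]
with $J_\eps:=\partial_t H_{1,\eps}\partial_s H_{2,\eps}-\partial_s H_{1,\eps}\partial_t H_{2,\eps}$, integration over $[0,1]^2$ (integrating over each rectangle of the partition and summing) gives
\begin{multline*}
\oint_{\gamma'_\eps}u_\eps(z)\,dz-\oint_{\gamma''_\eps}u_\eps(z)\,dz
=\Big(\int_{\delta_\eps}u_\eps(z)\,dz-\int_{\beta_\eps}u_\eps(z)\,dz\Big)\\
-i\iint_{[0,1]^2}\big((\partial_x+i\partial_y)u_\eps\big)(H_\eps(t,s))\,J_\eps(t,s)\,dt\,ds.
\end{multline*}
Since a general $\Cnt[1]_{pw}$ homotopy need not be twice differentiable, this step requires first smoothing $H_\eps$ on each rectangle of the partition to a $\Cnt[\infty]$ map and passing to the limit; uniform convergence of the map and its first derivatives makes every integral (including the interior edge integrals, which cancel pairwise) converge. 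The first difference on the right is negligible by Proposition~\ref{prop_path_integral_well_def}. For the area integral, $[(H_\eps([0,1]^2))_\eps]=\{H(\tilde t,\tilde s):\tilde t,\tilde s\in[0,1]\sptilde\}$ is an internal, sharply bounded subset of $A$ (sharply bounded because $\sup_{[0,1]^2}\abs{H_\eps}$ is moderate), and $\bar\partial u$ vanishes at every point of it since $u\in\tGen_H(A)$; hence by Lemma~\ref{lemma_pointwise_zero_internal} the net $\big(\sup_{z\in H_\eps([0,1]^2)}\abs{(\partial_x+i\partial_y)u_\eps(z)}\big)_\eps$ is negligible, while $\iint_{[0,1]^2}\abs{J_\eps}\le 2\big(\sup_{[0,1]^2\text{ a.e.}}\abs{\nabla H_\eps}\big)^2$ is moderate; so the area integral, being a product of a negligible and a moderate net, is negligible. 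This gives $\oint_{\gamma'}u(z)\,dz=\oint_{\gamma''}u(z)\,dz$ in $\GenC$ and the proof is complete.

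The estimate on the area integral is just the ``negligible times moderate'' principle; the main obstacles are of a bookkeeping nature. The ``vertical'' edges of $H$ agree with $\gamma$, $\tilde\gamma$ (and with each other) only as maps on $[0,1]\sptilde$, not representative-wise, which is why the reduction has to pass through the point-value clause of Proposition~\ref{prop_path_integral_well_def} rather than an equality of paths; and the mere $\Cnt[1]_{pw}$ regularity of a general homotopy forces the smoothing step above in place of the direct integration by parts available for the linear homotopy used in Proposition~\ref{prop_path_integral_well_def}.
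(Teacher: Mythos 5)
Your proof is correct and follows the paper's own approach: Green's theorem applied to the given homotopy $H$, the three edge contributions handled via the ``moreover'' clause of Proposition~\ref{prop_path_integral_well_def} (identifying $\gamma$ with the bottom edge, $\tilde\gamma$ with the top edge, and the two side edges with each other, on point values), and the area integral killed because $\bar\partial u$ vanishes at every point of the internal, sharply bounded set $[(H_\eps([0,1]^2))_\eps]\subseteq A$, so that its sup-representative is negligible. The one place you are more careful than the paper is the observation that the pointwise Green/Stokes identity implicitly uses the second mixed partials of $H_\eps$, which a general $\Cnt[1]_{pw}([0,1]^2)$ homotopy need not possess; the paper writes ``as in Proposition~\ref{prop_path_integral_well_def}'' without noting that there the homotopy is affine in $s$ and this issue does not arise, and your rectanglewise smoothing-and-limit step (with matching boundary limits so the interior edge contributions still cancel) is a valid way to close that gap.
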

\begin{proof}
Let $H=[(H_\eps)_\eps]$ be the given homotopy in $A$ between $\gamma$ and $\tilde \gamma$. As $[(H_\eps([0,1]^2))_\eps]\subseteq A$ and $\bar\partial u=0$ in $\tGen(A)$, proposition \ref{prop_NullA_internal} implies that $\sup_{z\in H_\eps([0,1]^2)}\abs{\bar\partial u_\eps(z)}$ is negligible. As in proposition \ref{prop_path_integral_well_def} (now using the given homotopies $H_\eps$ and integration for fixed $\eps$ over each $[a_{i-1,\eps},a_{i,\eps}]\times[a_{j-1,\eps},a_{j,\eps}]$, and summation), we find $(\nu_\eps)_\eps\in\Null_\C$ such that for each $\eps$,
\[\int_{\tilde\Gamma_{\eps}} u_\eps(z)\,dz - \int_{\Gamma_{\eps}} u_\eps(z)\,dz = \int_{\tilde\delta_\eps} u_\eps(z)\,dz - \int_{\delta_\eps} u_\eps(z)\,dz + \nu_\eps,\]
where $\Gamma_\eps(t)=H_\eps(t,0)$, $\tilde\Gamma_\eps(t)=H_\eps(t,1)$, $\delta_\eps(s)=H_\eps(0,s)$ and $\tilde\delta_\eps(s)=H_\eps(1,s)$, $\forall t,s\in [0,1]$ and $\eps\in (0,1)$. Since $\Gamma:=[(\Gamma_\eps)_\eps]$, $\tilde\Gamma:=[(\tilde\Gamma_\eps)_\eps]$, $\delta:=[(\delta_\eps)_\eps]$ and $\tilde\delta:=[(\tilde\delta_\eps)_\eps]$ are paths in $A$ and $\Gamma(\tilde t) = \gamma(\tilde t)$, $\tilde\Gamma(\tilde t) = \tilde\gamma(\tilde t)$, $\delta(\tilde s)=\tilde\delta(\tilde s)$, $\forall \tilde t,\tilde s\in [0,1]\sptilde$, the statement follows by applying proposition \ref{prop_path_integral_well_def}.
\end{proof}
\begin{cor}
Let $A\subseteq\GenC$ be open and simply connected, $u\in\tGen_H(A)$ and $\gamma$ a closed path in $A$. Then
\[\oint_{\gamma}u(z)\,dz=0.\]
\end{cor}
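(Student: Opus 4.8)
The plan is to deduce this directly from Proposition~\ref{prop_Cauchy_thm} by comparing $\gamma$ with a constant path. If $A=\emptyset$ there is no closed path in $A$ and the statement is vacuous, so fix $\tilde z_0\in A$ together with a representative $(z_\eps)_\eps$ of it, and let $\tilde\gamma=[(\tilde\gamma_\eps)_\eps]$ be the path with $\tilde\gamma_\eps\equiv z_\eps$ on $[0,1]$. Then $\tilde\gamma_\eps\in\Cnt[1]_{pw}([0,1])$ with $\tilde\gamma'_\eps=0$, so the two nets attached to a path are trivially moderate and $\tilde\gamma$ is a path in the sense of Definition~\ref{df_path}; moreover $\tilde\gamma(\tilde t)=\tilde z_0\in A$ for every $\tilde t\in[0,1]\sptilde$ and $\tilde\gamma(0)=\tilde\gamma(1)$, so $\tilde\gamma$ is a closed path in $A$.

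Next I would invoke simple connectedness of $A$: the closed paths $\gamma$ and $\tilde\gamma$ are homotopic in $A$, so Proposition~\ref{prop_Cauchy_thm}, applied to $u\in\tGen_H(A)$, gives $\oint_{\gamma}u(z)\,dz=\oint_{\tilde\gamma}u(z)\,dz$. Finally, for each $\eps$ the ordinary line integral is $\int_{\tilde\gamma_\eps}u_\eps(z)\,dz=\int_0^1 u_\eps(z_\eps)\,\tilde\gamma'_\eps(t)\,dt=0$, whence $\oint_{\tilde\gamma}u(z)\,dz=0$ in $\GenC$ and the corollary follows.

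There is essentially no obstacle here: the only points that need checking are that a constant map is a legitimate path (immediate, its derivative net being identically zero, so well-definedness of $\oint_{\tilde\gamma}$ is covered by Proposition~\ref{prop_path_integral_well_def}) and that $A$ contains a point to serve as base point (handled by the trivial case $A=\emptyset$). One could instead homotope $\gamma$ to an arbitrary one-point path, but building $\tilde\gamma$ from a representative of a fixed element of $A$ keeps the bookkeeping minimal.
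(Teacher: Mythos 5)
Your argument matches the paper's proof: both invoke simple connectedness to homotope $\gamma$ to a constant path, apply Proposition~\ref{prop_Cauchy_thm}, and observe that the integral over a constant path vanishes. You simply spell out the verification that a constant net is a legitimate closed path in $A$, which the paper leaves implicit.
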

\begin{proof}
Since $A$ is simply connected, there exists a homotopy between $\gamma$ and a constant path. The result follows by the previous proposition.
\end{proof}

Let $\tilde x\in\GenR$. We write $\tilde x\gg 0$ iff $\tilde x\ge 0$ and $\tilde x$ invertible in $\GenR$ (i.e., $\abs{\tilde x}\ge \caninf^m$, for some $m\in\N$ \cite[Thm.\ 1.2.38]{GKOS}). Similarly, $\tilde x\gg \tilde y$ iff $\tilde y\ll \tilde x$ iff $\tilde x - \tilde y\gg 0$.
\begin{prop}\label{prop_Cauchy_formula}
Let $r\in\GenR$, $r\gg 0$. Let $\tilde a\in\GenC$. Let $A\subseteq\GenC$ be open with $\{\tilde \zeta\in\GenC: \abs[]{\tilde \zeta - \tilde a}\le r\}\subseteq A$. For $u\in\tGen_H(A)$, $k\in\N$ and $\gamma=\partial B(\tilde a,r)$ with positive orientation and $\tilde z\in\GenC$ with $\abs{\tilde z - \tilde a}\ll r$,
\[D^k u(\tilde z) = \frac{k!}{2\pi i}\oint_\gamma \frac{u(\zeta)}{(\zeta - \tilde z)^{k+1}}\,d\zeta.\]
\end{prop}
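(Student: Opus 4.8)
The plan is to establish the case $k=0$ first, directly at the level of representatives via the inhomogeneous Cauchy formula, and then to recover all $k$ by differentiating an identity of generalized functions on a slightly smaller ball, thereby avoiding any analytic estimate for the derivatives. For the set-up, fix representatives of $u$, $\tilde a$, $r$, $\tilde z$ and take $\gamma_\eps(t)=a_\eps+r_\eps e^{2\pi it}$ as a representative of $\gamma$. The set $\overline B(\tilde a,r):=\{\tilde\zeta\in\GenC:\abs{\tilde\zeta-\tilde a}\le r\}$ is internal, contained in $A$, and sharply bounded, with sharply bounded representative $(\overline{B(a_\eps,r_\eps)})_\eps$; Corollary \ref{cor_EModA_internal} applied to it gives $\big(\sup_{\overline{B(a_\eps,r_\eps)}}\abs{\partial^\alpha u_\eps}\big)_\eps\in\Mod_\R$ for each $\alpha$, and Proposition \ref{prop_NullA_internal}, applied to $\bar\partial u=0$ in $\tGen(A)$, gives $\big(\sup_{\overline{B(a_\eps,r_\eps)}}\abs{\bar\partial u_\eps}\big)_\eps\in\Null_\R$. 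Since $\abs{\tilde z-\tilde a}\ll r$ we may assume $z_\eps\in B(a_\eps,r_\eps)$, and there is $s\in\N$ with $r_\eps-\abs{z_\eps-a_\eps}\ge\eps^s$ for small $\eps$. On the internal set $[(\gamma_\eps([0,1]))_\eps]\subseteq\overline B(\tilde a,r)\subseteq A$ one then has $\abs{\tilde\zeta-\tilde a}=r$, hence $\abs{\tilde\zeta-\tilde z}\ge r-\abs{\tilde z-\tilde a}\gg0$, so $\zeta-\tilde z$ is invertible there and $\frac1{(\zeta-\tilde z)^{k+1}}\in\tGen([(\gamma_\eps([0,1]))_\eps])$ by Proposition \ref{prop_invertible}; by Proposition \ref{prop_path_integral_well_def} the right-hand side of the claimed formula therefore equals $\big[\big(\frac{k!}{2\pi i}\oint_{\gamma_\eps}\frac{u_\eps(\zeta)}{(\zeta-z_\eps)^{k+1}}\,d\zeta\big)_\eps\big]$.

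For $k=0$: since each $u_\eps\in\Cnt[\infty](\R^2)$ and $z_\eps\in B(a_\eps,r_\eps)$, the inhomogeneous Cauchy (Cauchy--Pompeiu) formula — which one may also derive $\eps$-wise by a Green's-theorem argument as in the proof of Proposition \ref{prop_path_integral_well_def} — yields, for small $\eps$,
\[
u_\eps(z_\eps)=\frac1{2\pi i}\oint_{\gamma_\eps}\frac{u_\eps(\zeta)}{\zeta-z_\eps}\,d\zeta-\frac1\pi\iint_{B(a_\eps,r_\eps)}\frac{\bar\partial u_\eps(\zeta)}{\zeta-z_\eps}\,dA(\zeta).
\]
The modulus of the area term is at most $\frac1\pi\big(\sup_{\overline{B(a_\eps,r_\eps)}}\abs{\bar\partial u_\eps}\big)\iint_{B(a_\eps,r_\eps)}\frac{dA(\zeta)}{\abs{\zeta-z_\eps}}$, and since $B(a_\eps,r_\eps)\subseteq B(z_\eps,2r_\eps)$ the remaining integral is $\le4\pi r_\eps$, which is moderate; hence the area term is negligible and $u(\tilde z)=[(u_\eps(z_\eps))_\eps]=\frac1{2\pi i}\oint_\gamma\frac{u(\zeta)}{\zeta-\tilde z}\,d\zeta$, i.e.\ the case $k=0$.

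For general $k$: choose $\tilde\rho\in\GenR$ with $\abs{\tilde z-\tilde a}\ll\tilde\rho\ll r$ (e.g.\ $\tilde\rho=\abs{\tilde z-\tilde a}+\tfrac12\caninf^s$) and set $B:=\{\tilde\zeta\in\GenC:\abs{\tilde\zeta-\tilde a}\ll\tilde\rho\}$, an open subset of $A$ containing $\tilde z$. For each $\eps$ the function $w_\eps(z):=\frac1{2\pi i}\oint_{\gamma_\eps}\frac{u_\eps(\zeta)}{\zeta-z}\,d\zeta$ is holomorphic on $B(a_\eps,\tfrac{r_\eps+\rho_\eps}2)$ and, by the moderateness of $\sup_{\gamma_\eps}\abs{u_\eps}$ together with $r_\eps-\rho_\eps\ge\tfrac12\eps^s$, bounded there by a fixed power of $\eps$; hence $w:=[(w_\eps)_\eps]\in\tGen_H(B)$ by Lemma \ref{lemma_tGen_H_by_representatives} and the remark following it. Applying the case $k=0$ at each $\tilde y\in B$ (note $\abs{\tilde y-\tilde a}\ll\tilde\rho\ll r$) gives $w(\tilde y)=u(\tilde y)$ for all $\tilde y\in B$, so $w=u$ in $\tGen(B)$ by Proposition \ref{prop_GenA_pointvalues}(3). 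Since $D^k$ acts on $\tGen(B)$ as $\partial_x^k$, it follows that $D^ku(\tilde z)=\partial_x^ku(\tilde z)=\partial_x^kw(\tilde z)=[(\partial_x^kw_\eps(z_\eps))_\eps]$; and since $\gamma_\eps([0,1])$ stays at distance $\ge\tfrac12(r_\eps-\rho_\eps)$ from $B(a_\eps,\tfrac{r_\eps+\rho_\eps}2)$, differentiation under the integral sign is the classical identity $\partial_x^kw_\eps(z)=\frac{k!}{2\pi i}\oint_{\gamma_\eps}\frac{u_\eps(\zeta)}{(\zeta-z)^{k+1}}\,d\zeta$. Together with the set-up this yields $D^ku(\tilde z)=\frac{k!}{2\pi i}\oint_\gamma\frac{u(\zeta)}{(\zeta-\tilde z)^{k+1}}\,d\zeta$.

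I expect the main obstacle to be the case $k=0$: the representatives $u_\eps$ are only \emph{approximately} holomorphic, so the classical Cauchy formula does not apply directly, and one must balance the negligibility of $\sup\abs{\bar\partial u_\eps}$ on the disc against the merely moderate growth of $\iint\frac{dA(\zeta)}{\abs{\zeta-z_\eps}}$ and of $\sup_{\gamma_\eps}\abs{u_\eps}$; in particular the classical ``small circle around $\tilde z$'' argument is unavailable, since the circle cannot be shrunk below polynomial size while keeping $\zeta-\tilde z$ invertible, and the modulus of continuity of $u_\eps$ is only moderate. Once $k=0$ is available, the general case is essentially free, since it is reduced to differentiating the equality of generalized functions $w=u$ on a smaller ball rather than to any further analytic estimate. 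The remaining work is the routine verification that $\overline B(\tilde a,r)$, $[(\gamma_\eps([0,1]))_\eps]$ and $B$ are internal and sharply bounded (resp.\ open), with the obvious representatives, so that Corollary \ref{cor_EModA_internal}, Proposition \ref{prop_NullA_internal}, Proposition \ref{prop_invertible} and Lemma \ref{lemma_tGen_H_by_representatives} apply.
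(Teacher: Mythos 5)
Your proof is correct, but for the base case $k=0$ it takes a genuinely different route from the paper, and your diagnosis of the ``main obstacle'' is not accurate. The paper \emph{does} use the small-circle argument you dismiss as unavailable: after noting that $v(\zeta)=u(\zeta)/(\zeta-\tilde z)^{k+1}$ lies in $\tGen_H$ of the annular set $A_m=\{\tilde\zeta:\caninf^m\le\abs{\tilde\zeta-\tilde z},\ \abs{\tilde\zeta-\tilde a}\le r\}$, it invokes Proposition~\ref{prop_Cauchy_thm} (homotopy invariance) to replace $\gamma$ by $\gamma_m=\partial B(\tilde z,\caninf^m)$, and then uses the $\EMod(\{\tilde z\})$ gradient bound to estimate $\abs{\frac{1}{2\pi i}\oint_{\gamma_m}\frac{u(\zeta)-u(\tilde z)}{\zeta-\tilde z}\,d\zeta}\le\caninf^{m-N}$. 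You are right that the circle cannot be shrunk past polynomial radius and that the modulus of continuity is only moderate, but this is not an obstruction: $m$ is arbitrary, the bound $\caninf^{m-N}$ holds for \emph{every} large $m$, and the sharp norm of a fixed element of $\GenC$ bounded by $\caninf^{m-N}$ for all $m$ is $0$. So the classical scheme goes through verbatim, once one observes the bound is a null sequence in $m$.

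Your alternative --- Cauchy--Pompeiu applied $\eps$-wise to $u_\eps$ on the full disc $B(a_\eps,r_\eps)$, with the area defect controlled by $\sup_{\overline{B(a_\eps,r_\eps)}}\abs{\bar\partial u_\eps}$ (negligible, via Prop.~\ref{prop_NullA_internal}) times the moderate integral $\iint\frac{dA}{\abs{\zeta-z_\eps}}\le 4\pi r_\eps$ --- is a clean and valid substitute. It trades the appeal to the generalized Cauchy theorem for a single classical identity plus one negligibility estimate, which is arguably more economical; the paper's version has the advantage of not needing the explicit defect term. For the passage from $k=0$ to general $k$, both proofs do essentially the same thing: show $u$ and the net of Cauchy integrals coincide as elements of $\tGen$ of a smaller ball (you on $B=\{\abs{\tilde\zeta-\tilde a}\ll\tilde\rho\}$, the paper on $\{\abs{\tilde\zeta-\tilde a}\ll r\}$) via Proposition~\ref{prop_GenA_pointvalues}(3), then read off all partial derivatives at $\tilde z$ by differentiating under the integral sign; your explicit use of Lemma~\ref{lemma_tGen_H_by_representatives} to put $w=[(w_\eps)_\eps]$ into $\tGen_H(B)$ is the same step the paper compresses into the phrase ``by differentiation under the integral sign, $(\ldots)_\eps\in\EMod(\ldots)$.''
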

\begin{proof}
Let $\tilde z=[(z_\eps)_\eps]$.
Let $M\in\N$ such that $\abs{\tilde z - \tilde a}\le r - \caninf^M$. For $m>M$, let $A_m=\{\tilde \zeta\in\GenC: \abs[]{\tilde \zeta - \tilde z}\ge \caninf^m, \abs[]{\tilde \zeta - \tilde a}\le r \}$. Since $v(\zeta):=\frac{u(\zeta)}{(\zeta - \tilde z)^{k+1}}\in \Gen_H(A_m)$, for each $m\in\N$, we may perform the integration over $\gamma_m:=\partial B(\tilde z,\caninf^m)$ instead of $\gamma$ (for any $m\in\N$) by proposition \ref{prop_Cauchy_thm}. Let $k=0$. Since $u\in\EMod(\{\tilde z\})$, as in proposition \ref{prop_GenA_pointvalues}, there exists $N\in\N$ such that for sufficiently large $m$, $\sup_{\abs{z-z_\eps}\le\eps^m}\abs{u_\eps(z)-u_\eps(z_\eps)}\le\eps^{-N}\sup_{\abs{z-z_\eps}\le \eps^m}{\abs{z-z_\eps}}\le \eps^{m-N}$.
Hence
\[
\abs{\frac{1}{2\pi i}\oint_{\gamma_m} \frac{u(\zeta)}{\zeta - \tilde z}\,d\zeta - u(\tilde z)}
= \abs{\frac{1}{2\pi i}\oint_{\gamma_m} \frac{u(\zeta) - u(\tilde z)}{\zeta - \tilde z}\,d\zeta}
\le \caninf^{m-N},
\]
for each sufficiently large $m$. Let $u=[(u_\eps)_\eps]$ and $\gamma=[(\gamma_\eps)_\eps]$. By differentiation under the integral sign, $\big(\int_{\gamma_\eps} \frac{u_\eps(\zeta)}{\zeta - z}\,d\zeta\big)_\eps\in\EMod(\{\tilde \zeta\in\GenC: \abs[]{\tilde \zeta - \tilde a}\ll r\})$, hence $u(z) = \frac{1}{2\pi i}\int_\gamma \frac{u(\zeta)}{\zeta - z}\,d\zeta$ in $\tGen(\{\tilde \zeta\in\GenC: \abs[]{\tilde \zeta - \tilde a}\ll r\})$ by proposition \ref{prop_GenA_pointvalues}(3). By definition of $\tGen(\{\tilde z\})$, this implies that all partial derivatives in the point $\tilde z$ are equal.
\end{proof}

\begin{cor}\label{cor_Cauchy_formula}
Let $r\in\GenR$, $r\gg 0$. Let $\tilde a\in\GenC$. Let $A\subseteq\GenC$ be open with $\{\tilde \zeta\in\GenC: \abs[]{\tilde \zeta - \tilde a}\le r\}\subseteq A$. Let $u\in\tGen_H(A)$ and let $\tilde z\in\GenC$ with $\abs{\tilde z - \tilde a}\ll r$. Then for each $k\in\N$, $\abs{D^k u(\tilde z)}\le k! r^{-k}\max_{\abs[]{\tilde \zeta -\tilde a} = r}\abs[]{u(\tilde \zeta)}$.
\end{cor}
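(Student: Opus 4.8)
The plan is to get the estimate from the Cauchy integral formula, integrating over a circle centred at $\tilde z$ rather than at $\tilde a$ (integrating over $\partial B(\tilde a,r)$ would lose a factor $r/(r-\abs{\tilde z-\tilde a})\ge 1$, since there $\abs{\zeta-\tilde z}$ is no longer constant on the contour). Put $s:=r-\abs{\tilde z-\tilde a}\in\GenR$. The hypothesis $\abs{\tilde z-\tilde a}\ll r$ is exactly the statement $s\gg 0$, so $s$ is invertible and, by the triangle inequality, $\{\tilde\zeta\in\GenC:\abs{\tilde\zeta-\tilde z}\le s\}\subseteq\{\tilde\zeta\in\GenC:\abs{\tilde\zeta-\tilde a}\le r\}\subseteq A$. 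Since $\abs{\tilde z-\tilde z}=0\ll s$, Proposition \ref{prop_Cauchy_formula}, applied with centre $\tilde z$, radius $s$ and evaluation point $\tilde z$, yields
\[
D^k u(\tilde z)=\frac{k!}{2\pi i}\oint_{\gamma_s}\frac{u(\zeta)}{(\zeta-\tilde z)^{k+1}}\,d\zeta,\qquad\gamma_s=\partial B(\tilde z,s)\ \text{with positive orientation.}
\]

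Next I would bound the integral $\eps$-wise. Taking a representative $(\gamma_{s,\eps})_\eps$ equal to the constant-speed parametrisation of $\partial B(z_\eps,s_\eps)$ over $[0,1]$, the elementary estimate $\abs{\int_\gamma g}\le\mathrm{length}(\gamma)\cdot\sup_\gamma\abs g$ gives, for each $\eps$,
\[
\abs[\bigg]{\int_{\gamma_{s,\eps}}\frac{u_\eps(\zeta)}{(\zeta-z_\eps)^{k+1}}\,d\zeta}\le 2\pi s_\eps\cdot s_\eps^{-(k+1)}\sup_{\abs{\zeta-z_\eps}=s_\eps}\abs{u_\eps(\zeta)}.
\]
By Corollary \ref{cor_EModA_internal} applied to the internal, sharply bounded set $\{\tilde\zeta:\abs{\tilde\zeta-\tilde z}=s\}$, the net $\bigl(\sup_{\abs{\zeta-z_\eps}=s_\eps}\abs{u_\eps(\zeta)}\bigr)_\eps$ is moderate, and promoting the displayed $\eps$-wise inequality to $\GenR$ uses only the moderateness/negligibility bookkeeping from the proof of Proposition \ref{prop_path_integral_well_def}. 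Hence in $\GenR$
\[
\abs{D^k u(\tilde z)}\le k!\,s^{-k}\sup_{\abs{\tilde\zeta-\tilde z}=s}\abs{u(\tilde\zeta)},
\]
where $\sup_{\abs{\tilde\zeta-\tilde z}=s}\abs{u(\tilde\zeta)}$ denotes the element of $\GenR$ with representative $\bigl(\sup_{\abs{\zeta-z_\eps}=s_\eps}\abs{u_\eps(\zeta)}\bigr)_\eps$ (finite and attained, the circles being compact).

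Finally I would replace the circle $\abs{\tilde\zeta-\tilde z}=s$ by $\abs{\tilde\zeta-\tilde a}=r$. From $\abs{\zeta-z_\eps}=s_\eps\implies\abs{\zeta-a_\eps}\le s_\eps+\abs{z_\eps-a_\eps}=r_\eps$ we get $\{\tilde\zeta:\abs{\tilde\zeta-\tilde z}=s\}\subseteq\{\tilde\zeta:\abs{\tilde\zeta-\tilde a}\le r\}$, hence $\sup_{\abs{\tilde\zeta-\tilde z}=s}\abs{u(\tilde\zeta)}\le\sup_{\abs{\tilde\zeta-\tilde a}\le r}\abs{u(\tilde\zeta)}$; and the latter equals $\max_{\abs{\tilde\zeta-\tilde a}=r}\abs{u(\tilde\zeta)}$ by a generalized maximum-modulus principle. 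That principle follows, $\eps$-wise, from $\max_{\overline B(a_\eps,r_\eps)}\abs{u_\eps}\le\max_{\partial B(a_\eps,r_\eps)}\abs{u_\eps}+C r_\eps\sup_{\overline B(a_\eps,r_\eps)}\abs{\bar\partial u_\eps}$ (solve $\bar\partial$ on the disc, or work with a genuinely holomorphic representative), the last term being negligible since $\bar\partial u=0$ on $A$, together with the observation — as in Lemma \ref{lemma_pointwise_zero_internal} — that both maxima are represented by the $\eps$-wise compact maxima. Combining the three displays,
\[
\abs{D^k u(\tilde z)}\le k!\,(r-\abs{\tilde z-\tilde a})^{-k}\max_{\abs{\tilde\zeta-\tilde a}=r}\abs{u(\tilde\zeta)}.
\]

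The maximum-modulus step is the only part that is not purely mechanical; the rest reduces to the length-times-sup bound and the internal-set bookkeeping already in place. The point to watch is the leading factor: the argument produces $(r-\abs{\tilde z-\tilde a})^{-k}$, which coincides with $r^{-k}$ precisely when $\tilde z=\tilde a$ (then $s=r$, the contour is already $\partial B(\tilde a,r)$, and no maximum-modulus step is needed). For $\tilde z\ne\tilde a$ one only has $(r-\abs{\tilde z-\tilde a})^{-k}\ge r^{-k}$, so the strictly stronger $r^{-k}$-form does not follow from this estimate — and it genuinely fails: for $u(z)=z^2$, $\tilde a=0$, $r=1$, $k=1$ and $\tilde z\in\GenR$ with $0\ll\tilde z\ll 1$ one has $\abs{D^1 u(\tilde z)}=2\tilde z$, whereas $k!\,r^{-k}\max_{\abs{\tilde\zeta}=1}\abs{u(\tilde\zeta)}=1$. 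Accordingly the corollary should carry $r-\abs{\tilde z-\tilde a}$ in place of $r$, and the proof above establishes that version.
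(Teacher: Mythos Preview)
Your diagnosis is correct: the corollary as printed is false for $\tilde z\ne\tilde a$, and your counterexample $u(z)=z^2$, $\tilde a=0$, $r=1$, $\tilde z=3/4$ (so $\abs{D^1u(\tilde z)}=3/2>1$) settles that cleanly. The paper's own proof simply invokes Proposition~\ref{prop_Cauchy_formula} over $\gamma=\partial B(\tilde a,r)$ and the identification $\max_{\abs{\tilde\zeta-\tilde a}=r}\abs{u(\tilde\zeta)}=[(\max_{\abs{\zeta-a_\eps}=r_\eps}\abs{u_\eps(\zeta)})_\eps]$; filling in the length-times-sup bound with that contour produces $k!\,r\,(r-\abs{\tilde z-\tilde a})^{-(k+1)}$, which agrees with $k!\,r^{-k}$ only at $\tilde z=\tilde a$. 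Every subsequent use of the corollary in the paper (Liouville, polynomial growth, Theorem~\ref{thm_holomorphic_charac}) has precisely $\tilde z=\tilde a$, so nothing downstream is affected, but the general statement needs the factor $(r-\abs{\tilde z-\tilde a})^{-k}$ you derived.

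Your route---recentering at $\tilde z$ with radius $s=r-\abs{\tilde z-\tilde a}$---is the right fix and yields the sharp constant; this is genuinely different from the paper's implicit contour. The one place to be careful is the generalized maximum-modulus step. The $\eps$-wise inequality you quote is not standard in that exact form; the clean way is to invoke Proposition~\ref{prop_analytic_repres} (which does not depend on this corollary) to get a representative $(u_\eps)_\eps$ holomorphic on $\overline{B(a_\eps,r_\eps)}$, so that the classical maximum-modulus principle gives $\max_{\overline{B(a_\eps,r_\eps)}}\abs{u_\eps}=\max_{\partial B(a_\eps,r_\eps)}\abs{u_\eps}$ exactly, and then Lemma~\ref{lemma_pointwise_zero_internal} identifies both sides in $\GenR$. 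Alternatively, if you are content with $\max_{\abs{\tilde\zeta-\tilde a}\le r}\abs{u(\tilde\zeta)}$ on the right (which suffices for every application in the paper), that step disappears entirely.
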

\begin{proof}
By the previous proposition, since for $\tilde a=[(a_\eps)_\eps]$, $r=[(r_\eps)_\eps]$ and $u=[(u_\eps)_\eps]$,
\[\max_{\abs[]{\tilde \zeta -\tilde a} = r}\abs[]{u(\tilde \zeta)}=\big[\big(\max_{\abs{\zeta - a_\eps} = r_\eps}\abs{u_\eps(\zeta)}\big)_\eps\big].\]
\end{proof}

\begin{prop}[Liouville's theorem]\label{prop_Liouville}
If $u\in\tGen_H(\GenC)$ is bounded (i.e., there exists $C\in\GenR$ such that $\abs{u(\tilde z)}\le C$, for each $\tilde z\in\GenC$), then $u$ is a generalized constant.
\end{prop}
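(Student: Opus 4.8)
The plan is to adapt the classical proof of Liouville's theorem, using the Cauchy estimate from Corollary \ref{cor_Cauchy_formula} applied at arbitrarily large radii. Since $u$ is bounded, there is $C\in\GenR$ with $\abs{u(\tilde z)}\le C$ for all $\tilde z\in\GenC$; note $C$ is necessarily moderate (e.g.\ since $\abs{u(0)}\le C$ and point values of $\tGen(\GenC)$ are moderate, though in fact we only need a representative with a representative net $(C_\eps)_\eps$, which exists because $C\in\GenR$). Fix $\tilde a\in\GenC$ arbitrary. For every $n\in\N$, the closed ball of radius $\caninf^{-n}$ around $\tilde a$ is contained in $\GenC = A$, and $\caninf^{-n}\gg 0$, so Corollary \ref{cor_Cauchy_formula} with $k=1$, $r=\caninf^{-n}$ and $\tilde z=\tilde a$ (so $\abs{\tilde z-\tilde a}=0\ll r$) gives
\[
\abs{u'(\tilde a)}\le r^{-1}\max_{\abs{\tilde\zeta-\tilde a}=\caninf^{-n}}\abs{u(\tilde\zeta)}\le \caninf^{n}\, C,
\]
for every $n\in\N$. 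Hence $\abs{u'(\tilde a)}\le \caninf^n C$ for all $n$, which forces $\abs{u'(\tilde a)}=0$, i.e.\ $u'(\tilde a)=0$ in $\GenC$.

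Having shown $u'(\tilde z)=0$ for every $\tilde z\in\GenC$, I would next promote this to $u'=0$ as an element of $\tGen_H(\GenC)$: since $\GenC$ is open, Proposition \ref{prop_GenA_pointvalues}(3) applied to $u'=\partial_x u\in\tGen(\GenC)$ yields $u'=0$. Then I want to conclude that $u$ is a generalized constant, i.e.\ $u = [(c_\eps)_\eps]$ for some constant net $(c_\eps)_\eps$ (equivalently, $u$ agrees in $\tGen(\GenC)$ with an element of $\GenC$ embedded as the constant functions). The cleanest route: $u = u(0) + v$ where $v := u - u(0)\in\tGen_H(\GenC)$ satisfies $v(0)=0$ and $v'=0$; it suffices to show $v=0$. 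For a representative $(u_\eps)_\eps$ of $u$, the value $u(0)=[(u_\eps(0))_\eps]$ is moderate, and we estimate, for $\tilde z=[(z_\eps)_\eps]$ with representative chosen in any fixed sharply bounded ball $\{\abs{z}\le\caninf^{-N}\}$,
\[
\abs{u_\eps(z_\eps)-u_\eps(0)} = \Bigl|\int_0^1 \nabla u_\eps(t z_\eps)\cdot z_\eps\, dt\Bigr|\le \abs{z_\eps}\sup_{\abs{z}\le\eps^{-N}}\abs{\nabla u_\eps(z)}.
\]
Because $u'=0$ in $\tGen_H(\GenC)$ and $\{\abs{z}\le\caninf^{-N}\}$ is internal and sharply bounded, Proposition \ref{prop_NullA_internal} gives that $\bigl(\sup_{\abs{z}\le\eps^{-N}}\abs{\nabla u_\eps(z)}\bigr)_\eps$ is negligible (here $\partial_x u = \partial_y u = 0$ on all of $\GenC$, hence in particular on this ball, so $\nabla u = 0$ there as an element of $\tGen$). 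Therefore $\bigl(\sup_{\abs{z}\le\eps^{-N}}\abs{u_\eps(z)-u_\eps(0)}\bigr)_\eps$ is negligible for every $N$, which by Lemma \ref{lemma_pointwise_zero_internal} (or directly by Proposition \ref{prop_NullA_internal} and Corollary \ref{cor_GenA_union_of_internal}) means $u - u(0) = 0$ in $\tGen(\GenC)$, i.e.\ $u = u(0)\in\GenC$ is a generalized constant.

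The main obstacle is making the last step precise: one must be careful that "bounded" only gives a constant $C\in\GenR$ (so $C$ has a moderate representative, which is what powers the Cauchy estimate), and that the passage from "$u'=0$ pointwise on the open set $\GenC$" to "$u$ constant" genuinely uses the internal-ball decomposition $\GenC = \bigcup_{N}\{\abs{z}\le\caninf^{-N}\}$ together with Proposition \ref{prop_NullA_internal}, rather than a naive connectedness argument — $\GenC$ is not connected in the sharp topology, so one cannot simply invoke path-connectedness; instead the fundamental-theorem-of-calculus estimate above works $\eps$-wise on each sharply bounded ball and that is enough. Everything else is a routine transcription of the classical argument, with Corollary \ref{cor_Cauchy_formula} replacing the classical Cauchy estimate and $\caninf^{-n}\to\infty$ in the sharp sense replacing the classical $R\to\infty$.
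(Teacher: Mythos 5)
Your proof is correct and matches the paper's own argument: apply Corollary \ref{cor_Cauchy_formula} at radii $r=\caninf^{-n}$ with $n\to\infty$ to obtain $u'(\tilde z)=0$ pointwise, promote this to $u'=0$ in $\tGen(\GenC)$ via Proposition \ref{prop_GenA_pointvalues}(3), and conclude that $u$ is a generalized constant. Where the paper simply cites the argument of \cite[Prop.~1.2.35]{GKOS} for this last implication, you have unpacked it explicitly with the mean-value estimate on each sharply bounded internal ball $\{\abs{\tilde z}\le\caninf^{-N}\}$ together with Proposition \ref{prop_NullA_internal}, which is exactly the intended argument (and your remark that one cannot naively invoke sharp-topology connectedness is a sensible caution).
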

\begin{proof}
By corollary \ref{cor_Cauchy_formula}, $u'(\tilde z)=0$, for each $\tilde z\in\GenC$. Hence $u'=0$ in $\tGen(\GenC)$. As in \cite[Prop.~1.2.35]{GKOS}, this implies that $u$ is a generalized constant.
\end{proof}

\begin{prop}
If $u\in\tGen_H(\GenC)$ is of polynomial growth (i.e., there exist $C\in\GenR$ and $m\in\N$ such that $\abs{u(\tilde z)}\le C + C\abs{\tilde z}^m$, for each $\tilde z\in\GenC$),
then $u\in\GenC[z]$.
\end{prop}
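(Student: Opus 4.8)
The plan is to mimic the classical proof that a holomorphic function of polynomial growth is a polynomial, using the Cauchy estimates from Corollary \ref{cor_Cauchy_formula} to kill the high-order derivatives at the origin, and then to conclude via the Taylor-type representation which is available because we are working over all of $\GenC$ (where the radius of the convergence ``ball'' is infinite). First I would fix $k > m$ and apply Corollary \ref{cor_Cauchy_formula} at $\tilde z = 0$ with an arbitrary radius $R \in \GenR$, $R \gg 0$: this gives $\abs{D^k u(0)} \le k!\, R^{-k} \max_{\abs{\tilde\zeta} = R}\abs{u(\tilde\zeta)} \le k!\, R^{-k}(C + C R^m) = k!\, C(R^{-k} + R^{m-k})$. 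Letting $R = \caninf^{-n}$ and $n \to \infty$ (equivalently, choosing $R$ with $\val(R)$ arbitrarily negative) forces $\val(D^k u(0)) = +\infty$, i.e.\ $D^k u(0) = 0$ in $\GenC$ for every $k > m$.

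Next I would show that $u$ equals the polynomial $P(z) = \sum_{j=0}^{m} \frac{D^j u(0)}{j!} z^j \in \GenC[z]$. For this I use Proposition \ref{prop_GenA_pointvalues}(3): since $\GenC$ is open, it suffices to check $u(\tilde z) = P(\tilde z)$ for each $\tilde z \in \GenC$. Fix such a $\tilde z = [(z_\eps)_\eps]$ and choose $M \in \N$ with $\abs{\tilde z} \le \caninf^{-M}$. Working with representatives $u = [(u_\eps)_\eps]$: the hypotheses of Corollary \ref{cor_Cauchy_formula} (or rather the $\eps$-wise Cauchy formula underlying Proposition \ref{prop_Cauchy_formula}, over the disc $B(0, R_\eps)$ with $R_\eps = \eps^{-M-1}$) give the classical Taylor remainder estimate: for each $\eps$,
\[
\abs[\Big]{u_\eps(z_\eps) - \sum_{j=0}^{k-1} \frac{D^j u_\eps(0)}{j!} z_\eps^j} \le \frac{\abs{z_\eps}^k}{R_\eps - \abs{z_\eps}}\, \max_{\abs{\zeta} = R_\eps}\abs{u_\eps(\zeta)},
\]
and the right-hand side is bounded by $\eps^{k(M+1)} \cdot (\text{moderate})$ for small $\eps$, hence negligible as $k \to \infty$ in the usual way. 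Combined with $D^j u(0) = 0$ for $j > m$, letting $k$ range over $\N$ shows $(u_\eps(z_\eps) - \sum_{j=0}^m \frac{D^j u_\eps(0)}{j!} z_\eps^j)_\eps \in \Null_\C$, i.e.\ $u(\tilde z) = P(\tilde z)$.

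**The main obstacle** is making the Taylor-series argument of the second paragraph rigorous in the generalized setting: one must be careful that the bound on $D^j u(0)$ in $\GenC$ translates into a usable $\eps$-wise bound on $D^j u_\eps(0)$ (the vanishing of $D^k u(0)$ only gives $\abs{D^k u_\eps(0)} \le \eps^q$ for small $\eps$ with $q$ depending on $k$, not uniformly), and that the remainder estimate can be pushed through uniformly in the chosen radius. The cleanest route is probably to avoid infinite sums entirely: fix once and for all the single value $k = m+1$, write the exact $\eps$-wise Taylor formula with integral remainder of order $m+1$, and observe that the remainder term $\frac{z_\eps^{m+1}}{2\pi i}\oint_{\partial B(0,R_\eps)} \frac{u_\eps(\zeta)}{\zeta^{m+1}(\zeta - z_\eps)}\,d\zeta$ has modulus $\le \abs{z_\eps}^{m+1} R_\eps^{-m-1} \frac{R_\eps}{R_\eps - \abs{z_\eps}} \max_{\abs\zeta = R_\eps}\abs{u_\eps(\zeta)} \le \caninf^{(m+1)(M+1)}\cdot C\cdot(\text{bounded})$ — but this is only $O(\caninf^{m+1})$, not negligible. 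So one genuinely needs $R_\eps \to \infty$ faster, e.g.\ $R_\eps = \eps^{-1/\eps}$ is not moderate; the correct fix is that for the \emph{fixed} $\tilde z$ one is allowed to take $R$ as large as one wishes in $\GenR$, and for each target negligibility order $p$ one picks $R = R^{(p)}$ accordingly, exactly as in the first paragraph — the point being that $u(\tilde z) - P(\tilde z)$ is a \emph{fixed} element of $\GenC$ whose valuation we show exceeds every $p \in \N$. Once this ``choose the radius depending on the order'' device is set up, the rest is routine and parallels \cite[Prop.~1.2.35]{GKOS} together with Proposition \ref{prop_Liouville}.
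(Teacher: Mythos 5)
Your argument, once the ``choose $R$ depending on the target negligibility order'' fix is in place, is correct but genuinely longer than the paper's. You first show $D^k u(0)=0$ for $k>m$ via the Cauchy estimate centred at $0$, and then must transport this to a pointwise identity $u=P$ by a Taylor remainder argument --- which, as you discovered, needs a second application of the arbitrary-radius device and tacitly also requires checking that the $\eps$-wise Cauchy--Green correction coming from $\bar\partial u_\eps$ is negligible. The paper skips this second stage entirely: it applies Corollary \ref{cor_Cauchy_formula} centred at an \emph{arbitrary} $\tilde z\in\GenC$ with $r=\caninf^{-n}$, obtaining
\[
\abs{D^{m+1}u(\tilde z)}\le(m+1)!\,\caninf^{n(m+1)}\big(C+C(\abs{\tilde z}+\caninf^{-n})^m\big)\le\caninf^{n-M}
\]
with $M=M(C,m,\tilde z)\in\N$; letting $n\to\infty$ gives $D^{m+1}u(\tilde z)=0$, hence $D^{m+1}u=0$ in $\tGen(\GenC)$ by Proposition \ref{prop_GenA_pointvalues}(3), and then $u\in\GenC[z]$ follows by iterating the fact that $u'=0$ implies $u$ constant \cite[Prop.~1.2.35]{GKOS}. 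The observation you missed is that the polynomial-growth hypothesis makes the Cauchy estimate kill the $(m+1)$-st derivative at \emph{every} centre, not just at $0$, which obviates the Taylor-series machinery altogether. (Also, your intermediate claim that the remainder with $R_\eps=\eps^{-M-1}$ is $O(\caninf^{m+1})$ is off: because $\max_{\abs\zeta=R_\eps}\abs{u_\eps(\zeta)}$ already grows like $\eps^{-m(M+1)}$, it is roughly $O(\caninf^{m+1-m(M+1)})$, possibly of negative valuation; this does not affect your corrected final argument with $R_\eps=\eps^{-n}$, $n$ arbitrary.)
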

\begin{proof}
Let $\tilde z\in\GenC$ and $n\in\N$. Then by corollary \ref{cor_Cauchy_formula},
\begin{multline*}
\abs{D^{m+1} u(\tilde z)} \le (m+1)!\caninf^{n(m+1)}\max_{\abs[]{\tilde \zeta - \tilde z}=\caninf^{-n}}\abs[]{u(\tilde \zeta)}\\
\le (m+1)!\caninf^{n(m+1)}(C + C(\abs{\tilde z} + \caninf^{-n})^m)\le \caninf^{n - M},
\end{multline*}
for some $M\in\N$ only depending on $C$, $m$ and $\tilde z$. As $n\in\N$ arbitrary, $D^{m+1} u (\tilde z)= 0$. As $\tilde z\in\GenC$ arbitrary, $D^{m+1} u = 0$. Hence $u\in\GenC[z]$ (as in $\Gen(\C)$, $u'=0$ implies that $u$ is a generalized constant, cf.\ \cite[Prop.~1.2.35]{GKOS}).
\end{proof}

\begin{lemma}\label{lemma_convergence_radius}
Let $a_n\in\GenC$, for each $n\in\N$. Then the sum $\sum_{n=0}^\infty a_n \tilde z^n$ converges for each $\tilde z\in\GenC$ with $\sharpnorm{\tilde z}< R$ and does not converge for each invertible $\tilde z\in\GenC$ with $\sharpnorm{\inv{\tilde z}}< 1/R$, where $R=1/\limsup_{n\to\infty}\sqrt[n]{\sharpnorm{a_n}}\in[0,+\infty]$. Moreover, convergence is uniform over each ball $\{\tilde z\in\GenC: \sharpnorm{\tilde z} \le r\}$, where $r<R$.
\end{lemma}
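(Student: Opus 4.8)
The plan is to run the classical Cauchy--Hadamard argument inside the complete ultrametric space $(\GenC,\sharpnorm\cdot)$, the only genuinely new ingredient being the effect of the zero divisors of $\GenC$ on the divergence half of the statement. First I would record the elementary properties of the sharp norm: it is an ultra-pseudo-norm, $\sharpnorm{\tilde x+\tilde y}\le\max(\sharpnorm{\tilde x},\sharpnorm{\tilde y})$, it is submultiplicative, $\sharpnorm{\tilde x\tilde y}\le\sharpnorm{\tilde x}\sharpnorm{\tilde y}$, and it is multiplicative on powers, $\sharpnorm{\tilde z^n}=\sharpnorm{\tilde z}^n$; all three follow at once from $\val(\tilde x\tilde y)\ge\val(\tilde x)+\val(\tilde y)$ and $\val(\tilde z^n)=n\val(\tilde z)$ on representatives. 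Since $\GenC$ is complete for the sharp metric \cite{Garetto2005}, a series $\sum_n \tilde b_n$ converges if and only if $\sharpnorm{\tilde b_n}\to 0$: by the ultrametric inequality $\sharpnorm{\sum_{n=N}^{M}\tilde b_n}\le\max_{N\le n\le M}\sharpnorm{\tilde b_n}$, so $\sharpnorm{\tilde b_n}\to0$ makes the partial sums Cauchy, hence convergent, while conversely $\tilde b_n$ is the difference of two consecutive partial sums. Thus the whole statement reduces to estimating $\sharpnorm{a_n\tilde z^n}$, and, for the uniform convergence assertion, to estimating it uniformly in $\tilde z$.

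For convergence and uniform convergence, fix $r<R$ (so $R>0$) and choose $q\in(r,R)$. Because $\limsup_{n}\sqrt[n]{\sharpnorm{a_n}}=1/R<1/q$, we have $\sharpnorm{a_n}\le q^{-n}$ for all $n\ge n_0$, hence by submultiplicativity and the power law
\[
\sharpnorm{a_n\tilde z^n}\le\sharpnorm{a_n}\,\sharpnorm{\tilde z}^n\le (r/q)^n\qquad(n\ge n_0,\ \sharpnorm{\tilde z}\le r).
\]
By the ultrametric inequality again, for $N\ge n_0$ and $M\ge N$,
\[
\sup_{\sharpnorm{\tilde z}\le r}\ \sharpnorm{\sum_{n=N}^{M}a_n\tilde z^n}\ \le\ \max_{N\le n\le M}(r/q)^n\ =\ (r/q)^N ,
\]
and $(r/q)^N\to 0$ as $N\to\infty$; so the partial sums are uniformly Cauchy on $\{\tilde z:\sharpnorm{\tilde z}\le r\}$ and therefore converge uniformly there. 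Since every $\tilde z$ with $\sharpnorm{\tilde z}<R$ lies in such a ball, this also gives convergence at every such $\tilde z$. (If $R=+\infty$ the same works for every $r$; if $R=0$ this part is vacuous.)

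For divergence, let $\tilde z$ be invertible with $s:=\sharpnorm{\inv{\tilde z}}<1/R$, and choose $q$ with $R<q<1/s$ (this interval is nonempty precisely because $s<1/R$; when $R=+\infty$ the hypothesis is never satisfied and there is nothing to prove). The key point is that invertibility of $\tilde z$ upgrades submultiplicativity into a usable lower bound: since $a_n=(a_n\tilde z^n)\,(\inv{\tilde z})^n$, the submultiplicativity and power law give $\sharpnorm{a_n}\le\sharpnorm{a_n\tilde z^n}\,\sharpnorm{\inv{\tilde z}}^n$, i.e.\ $\sharpnorm{a_n\tilde z^n}\ge s^{-n}\sharpnorm{a_n}$. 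As $\limsup_n\sqrt[n]{\sharpnorm{a_n}}=1/R>1/q$, there is a subsequence $(n_k)$ with $\sharpnorm{a_{n_k}}>q^{-n_k}$, whence $\sharpnorm{a_{n_k}\tilde z^{n_k}}\ge (s q)^{-n_k}\to+\infty$ because $sq<1$. So the terms of the series do not tend to $0$, and the series diverges.

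I expect the only real obstacle to be the divergence direction. Because $\GenC$ has zero divisors, $\sharpnorm\cdot$ is only submultiplicative, so the naive reverse bound $\sharpnorm{a_n\tilde z^n}\ge\sharpnorm{a_n}\sharpnorm{\tilde z}^n$ simply fails in general; it is exactly the invertibility of $\tilde z$ --- which is built into the hypothesis via $\sharpnorm{\inv{\tilde z}}$ --- that lets one divide by $\tilde z^n$ and recover a lower bound on $\sharpnorm{a_n\tilde z^n}$. Everything else is the standard root-test computation, with the ultrametric inequality replacing the triangle inequality (so that $\max$ replaces $\sum$ in the tail estimates and no absolute convergence bookkeeping is needed).
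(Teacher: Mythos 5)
Your proof is correct and follows essentially the same route as the paper's: reduce convergence to the terms tending to zero via the ultrametric inequality, use submultiplicativity of the sharp norm and the root-test estimate $\sharpnorm{a_n}\le q^{-n}$ for the convergence half, and for divergence exploit invertibility of $\tilde z$ to turn $\sharpnorm{a_n}\le\sharpnorm{a_n\tilde z^n}\sharpnorm{\inv{\tilde z}}^n$ into a lower bound on the terms. The only cosmetic difference is that you send $\sharpnorm{a_{n_k}\tilde z^{n_k}}$ to infinity by taking $q$ strictly between $R$ and $1/s$, whereas the paper is content to show the terms stay $\ge 1$ along a subsequence.
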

\begin{proof}
Let $r<r'<R$. Let $\tilde z\in\GenC$ with $\sharpnorm{\tilde z}\le r$. By the ultrapseudonorm property of the sharp norm, $\sum_{n=0}^\infty a_n \tilde z^n$ converges iff $\lim_{n\to\infty} a_n \tilde z^n = 0$ (in the sharp topology). By definition of $R$, $\sqrt[n]{\sharpnorm{a_n}}\le 1/r'$, as soon as $n$ is large enough. Hence $\sharpnorm{a_n \tilde z^n}\le\sharpnorm{a_n}\sharpnorm{\tilde z}^n\le (r/r')^n \to 0$.\\
Let $\tilde z\in\GenC$ invertible and $\sharpnorm{\inv{\tilde z}}\le 1/r< 1/R$. By definition of $R$, there are infinitely many $n\in\N$ such that $\sqrt[n]{\sharpnorm{a_n}}\ge 1/r$. Then $\sharpnorm{a_n \tilde z^n}\ge \sharpnorm{a_n \tilde z^n} r^n \sharpnorm{\inv{\tilde z}}^n\ge \sharpnorm{a_n} r^n\ge 1\not\to 0$ as $n\to\infty$.
\end{proof}
We call $R$ the \defstyle{convergence radius} of the power series.

\begin{lemma}\label{lemma_conv_radius_internal}
Let $\emptyset \ne A\subseteq\GenR^d$ be internal and sharply bounded. Let $(A_\eps)_\eps$ be a sharply bounded representative of $A$. Let $u\in\tGen(A)$. Then $\limsup_{\abs\alpha\to\infty}\sqrt[\abs\alpha]{\sharpnorm{\partial^\alpha u(\tilde x)}}\le 1$, $\forall \tilde x\in A$ iff
\[(\forall c \in\R^+) (\exists N\in\N) (\forall\alpha\in\N^d, \abs{\alpha}\ge N) \big(\sup_{x\in A_\eps}\abs{\partial^\alpha u_\eps(x)}\le \eps^{-c\abs{\alpha}}, \text{ for small }\eps\big).\]
\end{lemma}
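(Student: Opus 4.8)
The plan is to follow the template of proposition \ref{prop_GeninftyA_internal} (and proposition \ref{prop_invertible}): one implication is a direct valuation computation at a fixed point of $A$, the other a contraposition built on a diagonal construction of a single point $\tilde x\in A$.

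For the implication ``$\Leftarrow$'' I would fix an arbitrary $\tilde x=[(x_\eps)_\eps]\in A$ and assume without loss of generality that $x_\eps\in A_\eps$ for small $\eps$. Given $c\in\R^+$, choose $N$ as in the displayed condition. Then $\abs{\partial^\alpha u_\eps(x_\eps)}\le\eps^{-c\abs\alpha}$ for small $\eps$ and every $\alpha$ with $\abs\alpha\ge N$, so $\val(\partial^\alpha u_\eps(x_\eps))\ge -c\abs\alpha$, i.e.\ $\sharpnorm{\partial^\alpha u(\tilde x)}=e^{-\val(\partial^\alpha u_\eps(x_\eps))}\le e^{c\abs\alpha}$, hence $\sqrt[\abs\alpha]{\sharpnorm{\partial^\alpha u(\tilde x)}}\le e^{c}$ for all $\abs\alpha\ge N$. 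Thus $\limsup_{\abs\alpha\to\infty}\sqrt[\abs\alpha]{\sharpnorm{\partial^\alpha u(\tilde x)}}\le e^{c}$, and letting $c\downarrow 0$ gives the bound $\le 1$. This direction is routine.

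For ``$\Rightarrow$'' I would argue by contraposition. If the displayed condition fails there is $c\in\R^+$ such that for each $n\in\N$ there is $\alpha_n\in\N^d$ with $\abs{\alpha_n}\ge n$ and the property that for every $\eta\in(0,1)$ there is $\eps\le\eta$ with $\sup_{x\in A_\eps}\abs{\partial^{\alpha_n}u_\eps(x)}>\eps^{-c\abs{\alpha_n}}$. As in proposition \ref{prop_GeninftyA_internal}, enumerating the countable index family I would successively pick pairwise distinct $\eps_{n,m}\in(0,1/m)$ together with points $x_{\eps_{n,m}}\in A_{\eps_{n,m}}$ for which $\abs{\partial^{\alpha_n}u_{\eps_{n,m}}(x_{\eps_{n,m}})}>\eps_{n,m}^{-c\abs{\alpha_n}}$, and fill in $x_\eps\in A_\eps$ arbitrarily for the remaining small $\eps$. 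Since $(A_\eps)_\eps$ is sharply bounded, $\tilde x:=[(x_\eps)_\eps]$ is a moderate net and lies in $A$. The hypothesis applied to this $\tilde x$ produces $N_0\in\N$ with $\sharpnorm{\partial^\alpha u(\tilde x)}\le e^{c\abs\alpha/2}$ for all $\abs\alpha\ge N_0$, i.e.\ $\val(\partial^\alpha u_\eps(x_\eps))\ge -c\abs\alpha/2$; taking slack $\delta=c\abs\alpha/2$ upgrades this to $\abs{\partial^\alpha u_\eps(x_\eps)}\le\eps^{-c\abs\alpha}$ for small $\eps$. Choosing $\alpha=\alpha_n$ for some fixed $n$ with $\abs{\alpha_n}\ge N_0$ then contradicts $\abs{\partial^{\alpha_n}u_{\eps_{n,m}}(x_{\eps_{n,m}})}>\eps_{n,m}^{-c\abs{\alpha_n}}$ as $m\to\infty$ (because $\eps_{n,m}\to 0$).

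The main obstacle I anticipate is purely bookkeeping: unwinding the nested ``for small $\eps$'' quantifier in the negated statement, keeping the doubly indexed selection $(\eps_{n,m},x_{\eps_{n,m}})$ consistent, and matching the sharp\nobreakdash-norm limsup to the $\eps$\nobreakdash-wise bounds via the valuation (in particular choosing the slack $c/2$ so that $\val\ge -c\abs\alpha/2$ yields $\abs{\cdot}\le\eps^{-c\abs\alpha}$). The diagonal construction of $\tilde x$ is the same device already used in propositions \ref{prop_GeninftyA_internal} and \ref{prop_invertible}, so no new difficulty is expected there.
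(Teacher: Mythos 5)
Your proof is correct and follows essentially the same route as the paper: the $\Leftarrow$ direction is the routine valuation computation, and for $\Rightarrow$ the paper also argues by contraposition, extracts the same doubly-indexed family $(\eps_{n,m},x_{\eps_{n,m}})$ with $\abs{\partial^{\alpha_n}u_{\eps_{n,m}}(x_{\eps_{n,m}})}>\eps_{n,m}^{-c\abs{\alpha_n}}$, glues a single $\tilde x\in A$ from this data using sharp boundedness of $(A_\eps)_\eps$, and derives the contradiction from the hypothesis at $\tilde x$ via the same slack-in-$c$ step.
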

\begin{proof}
$\Rightarrow$: Supposing the conclusion is not true, we find $c\in\R^+$, $\alpha_n\in\N^d$ with $\abs{\alpha_n}\ge n$ (for each $n\in\N$), $\eps_{n,m}\in (0,1/m)$ (for each $n,m\in\N$) (by enumerating the countable family $(\eps_{n,m})_{n,m}$, we can successively choose the $\eps_{n,m}$ in such a way that they are all different) and $x_{\eps_{n,m}}\in  A_{\eps_{n,m}}$ with $\abs{\partial^{\alpha_n}u_{\eps_{n,m}}(x_{\eps_{n,m}})} > \eps_{n,m}^{-c\abs{\alpha_n}}$, $\forall n,m\in\N$. Choose $x_\eps \in A_\eps$ arbitrary, if $\eps\notin\{\eps_{n,m}: n,m\in\N\}$ is sufficiently small ($A_\eps\ne\emptyset$ for small $\eps$ since $A\ne \emptyset$). Then $\tilde x=[(x_\eps)_\eps]\in A$ (moderateness is guaranteed since $(A_\eps)_\eps$ is sharply bounded). By hypothesis, there exists $N\in\N$ such that for each $\alpha\in\N^d$ with $\abs{\alpha}\ge N$, $\abs{\partial^\alpha u(\tilde x)}\le\caninf^{-c\abs{\alpha}}$. This contradicts the fact that for a fixed $n>N$, $\abs{\alpha_n}\ge N$, $\lim_{m\to\infty}\eps_{n,m}=0$ and $\abs{\partial^{\alpha_n} u_{\eps_{n,m}}(x_{\eps_{n,m}})}> \eps_{n,m}^{-c\abs{\alpha_n}}$, $\forall m\in\N$.\\
$\Leftarrow$: clear.
\end{proof}

\begin{prop}\label{prop_Geninfty_implies_GenH}
Let $\tilde z_0\in A\subseteq\{\tilde z\in\GenC: \sharpnorm{\tilde z - \tilde z_0} < 1\}$ and let $A$ be open and star-shaped around $\tilde z_0$ (i.e., for each $\tilde z\in A$ and $\tilde t\in[0,1]\sptilde$, $\tilde t\tilde z + (1-\tilde t)\tilde z_0\in A$). Let $u\in\tGen(A)$ with $\limsup_{\abs\alpha\to\infty} \sqrt[\abs\alpha]{\sharpnorm{\partial^\alpha u(\tilde z)}}\le 1$, $\forall\tilde z\in A$ and $\bar\partial u (\tilde z_0) = 0$. Then $u(\tilde z)=\sum_{n=0}^\infty \frac{D^n(\tilde z_0)}{n!}(\tilde z - \tilde z_0)^n$, for each $\tilde z\in A$ and $u$ has a representative consisting of polynomials in $\C[z]$ (in particular, $u\in\tGen_H(A)$).
\end{prop}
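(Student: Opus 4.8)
The plan is to realise $u$ by the truncated Taylor polynomials of a representative at $\tilde z_0$, with a degree that grows as $\eps\to0$, and to show these polynomials can be chosen holomorphic. Throughout, $(u_\eps)_\eps$ is a representative with $u_\eps\in\Cnt[\infty](\R^2)$ (identifying $\C\cong\R^2$), $\tilde z_0=[(z_{0,\eps})_\eps]$, and I write $\partial_z=\frac12(\partial_x-i\partial_y)$ and $\partial_{\bar z}=\bar\partial$. Star-shapedness is used to obtain derivative bounds along the segments issuing from $\tilde z_0$: for $\tilde z=[(z_\eps)_\eps]\in A$, the internal, sharply bounded ``segment'' $S_{\tilde z}\subseteq A$ whose $\eps$-representative is the classical segment $[z_{0,\eps},z_\eps]\subseteq\C$ lies in $A$, so lemma \ref{lemma_conv_radius_internal} applied to the restriction of $u$ to $S_{\tilde z}$ (and to the singleton $\{\tilde z_0\}$) gives: for each $c>0$ there is $N$ with $\sup_{x\in[z_{0,\eps},z_\eps]}\abs{\partial^\alpha u_\eps(x)}\le\eps^{-c\abs\alpha}$ for $\abs\alpha\ge N$ and small $\eps$.

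Now fix $d_\eps\to\infty$ and let $P_{d_\eps,\eps}$ be the degree-$d_\eps$ Taylor polynomial of $u_\eps$ at $z_{0,\eps}$. For a fixed $\tilde z\in A$, $b:=\val(\tilde z-\tilde z_0)>0$ since $A\subseteq\{\sharpnorm{\tilde\zeta-\tilde z_0}<1\}$, so $\abs{z_\eps-z_{0,\eps}}\le\eps^b$ for small $\eps$; combining this with the bound above on $[z_{0,\eps},z_\eps]$ taken for some $c<b$ and with the Lagrange form of the remainder, $\abs{\partial^\beta(u_\eps-P_{d_\eps,\eps})(z_\eps)}$ is at most a polynomial in $d_\eps$ times $\eps^{(b-c)(d_\eps+1-\abs\beta)}$, which is negligible since $d_\eps\to\infty$. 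Together with the moderateness of $(u_\eps(z_\eps))_\eps$ this shows $(P_{d_\eps,\eps})_\eps\in\EMod(A)$ and that it represents $u$; the admissible rate of growth of $d_\eps$ is fixed by a diagonal argument at the end.

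Rewriting $P_{d_\eps,\eps}(z)=\sum_{m+n\le d_\eps}c_{m,n,\eps}(z-z_{0,\eps})^m(\bar z-\bar z_{0,\eps})^n$ by substituting for $x-x_{0,\eps}$ and $y-y_{0,\eps}$, one has $c_{m,n,\eps}=\frac1{m!\,n!}(\partial_z)^m(\partial_{\bar z})^nu_\eps(z_{0,\eps})$, and $Q_{d_\eps,\eps}:=\sum_{m\le d_\eps}c_{m,0,\eps}(z-z_{0,\eps})^m\in\C[z]$. For $n\ge1$, $(\partial_z)^m(\partial_{\bar z})^nu_\eps(z_{0,\eps})$ is a linear combination, with coefficients bounded by $2^{m+n}$, of the numbers $\partial^\gamma\bar\partial u_\eps(z_{0,\eps})$ with $\abs\gamma=m+n-1$. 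These are negligible: $\bar\partial u$ satisfies on $A$ the same limsup hypothesis as $u$ (because $\abs{\partial^\alpha\bar\partial u_\eps}\le\max(\abs{\partial_x\partial^\alpha u_\eps},\abs{\partial_y\partial^\alpha u_\eps})$), and, as $\bar\partial u$ vanishes on a sharp neighbourhood of $\tilde z_0$, a quantitative version of the argument in the proof of proposition \ref{prop_GenA_pointvalues}(2) — turning the smallness of $\bar\partial u_\eps$ near $z_{0,\eps}$ together with the derivative bounds on $\bar\partial u_\eps$ into smallness of $\partial^\gamma\bar\partial u_\eps(z_{0,\eps})$ — gives $\abs{\partial^\gamma\bar\partial u_\eps(z_{0,\eps})}\le\eps^q$ for any prescribed $q$, uniformly for $\abs\gamma<d_\eps$ provided $d_\eps$ grows slowly enough. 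Hence $(P_{d_\eps,\eps}-Q_{d_\eps,\eps})_\eps\in\Null(A)$, so $(Q_{d_\eps,\eps})_\eps$ is a representative of $u$ consisting of polynomials in $\C[z]$; in particular $u\in\tGen_H(A)$ and $D^nu(\tilde z_0)=[(n!\,c_{n,0,\eps})_\eps]$.

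Finally, $\abs{c_{n,0,\eps}}\le\frac1{n!}\eps^{-cn}$ for $n\ge N$ (any $c>0$) yields $\limsup_n\sqrt[n]{\sharpnorm{D^nu(\tilde z_0)/n!}}\le1$, so by lemma \ref{lemma_convergence_radius} the series $\sum_n\frac{D^nu(\tilde z_0)}{n!}(\tilde z-\tilde z_0)^n$ converges for $\sharpnorm{\tilde z-\tilde z_0}<1$, hence for every $\tilde z\in A$. For a fixed $\tilde z\in A$ and $N\in\N$, the sharp norm of $u(\tilde z)-\sum_{n=0}^N\frac{D^nu(\tilde z_0)}{n!}(\tilde z-\tilde z_0)^n=\big[\big(\sum_{N<m\le d_\eps}c_{m,0,\eps}(z_\eps-z_{0,\eps})^m\big)_\eps\big]$ is $\le\sup_{m>N}\sharpnorm{D^mu(\tilde z_0)/m!}\,\sharpnorm{\tilde z-\tilde z_0}^m\to0$ as $N\to\infty$, so $u(\tilde z)$ equals the sum of the series. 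The main obstacle is the uniformity in the truncation degree $d_\eps$ in the last two steps — keeping both the Taylor remainder and the discarded anti-holomorphic terms negligible as $d_\eps\to\infty$; in particular, discarding the anti-holomorphic part uses $\partial^\gamma\bar\partial u(\tilde z_0)=0$ for every $\gamma$, not merely $\bar\partial u(\tilde z_0)=0$, which is why one needs $\bar\partial u$ to vanish on a whole sharp neighbourhood of $\tilde z_0$.
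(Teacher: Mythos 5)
Your route is genuinely different from the paper's, and the comparison is instructive. The paper avoids the uniformity problem you flag as the main obstacle: instead of truncating the full two-variable Taylor expansion of $u_\eps$ at $z_{0,\eps}$ to an $\eps$-dependent degree $d_\eps$ and then discarding the anti-holomorphic part, it builds a holomorphic polynomial $w_\eps(z)=\sum_{n\le m_\eps}\frac{c_{n,\eps}}{n!}z^n$ directly from chosen representatives $(c_{n,\eps})_\eps$ of the \emph{fixed} generalized numbers $D^nu(\tilde z_0)$, with $\abs{c_{n,\eps}}\le\eps^{-na_n}$ and $a_n\to 0$ (available from the limsup hypothesis applied at the single point $\tilde z_0$). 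Moderateness of $w$ and its limsup bound are checked by a geometric-series estimate in the $a_n$, entirely decoupled from $u_\eps$; one then sets $f=u-w$, observes $\partial^\alpha f(\tilde z_0)=0$ for all $\alpha$, and shows $f(\tilde z)=0$ for each fixed $\tilde z$ via the two-variable Taylor remainder of $f_\eps$ along the segment $[z_{0,\eps},z_\eps]$ --- this is where star-shapedness and lemma \ref{lemma_conv_radius_internal} enter, just as in your proof. Because $w$ is built coefficient by coefficient from point-values at $\tilde z_0$, there is no single truncation degree $d_\eps$ that has to simultaneously beat the Lagrange remainder, keep the truncated polynomial moderate, and keep $\partial^\gamma\bar\partial u_\eps(z_{0,\eps})$ small for all $\abs\gamma<d_\eps$. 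Your diagonalisation on $d_\eps$ can probably be completed, but it is genuinely more bookkeeping than the paper's construction and would need to be written out; as it stands that is the weakest step of your argument.

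Your closing remark deserves emphasis, because it also applies to the paper's own proof. The step there asserting that $\partial^\alpha w(\tilde z_0)=\partial^\alpha u(\tilde z_0)$ for every $\alpha\in\N^2$ does not follow from $D^ku(\tilde z_0)=D^kw(\tilde z_0)$ for all $k$ together with the vanishing of the single point-value $\bar\partial u(\tilde z_0)\in\GenC$. Indeed $u_\eps(z)=(\bar z-\bar z_{0,\eps})^2$ has $\bar\partial u(\tilde z_0)=0$ and satisfies all the limsup bounds (every derivative of order $\ge 3$ vanishes), yet $\partial_x\partial_y u(\tilde z_0)=-2i\ne 2i=\partial_x\partial_y w(\tilde z_0)$ for the $w$ built from its $D^k$-values. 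So the hypothesis $\bar\partial u(\tilde z_0)=0$ has to be read as an identity in $\tGen(\{\tilde z_0\})$, i.e.\ $\partial^\gamma\bar\partial u(\tilde z_0)=0$ for every $\gamma$; this is exactly what you need to kill the anti-holomorphic coefficients, and you are right to point it out.
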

\begin{proof}
We may suppose that $\tilde z_0 =0$ (consider the translated generalized function $v(z)=u(z+\tilde z_0)$). Let $a\in\R^+$. Then $\sharpnorm{D^n u(0)}\le e^{na}$, for large $n$. Hence we can find $a_n\in\R$ with $a_n\to 0$ and representatives $(c_{n,\eps})_\eps$ of $D^n u(0)$ with $\abs{c_{n,\eps}}\le\eps^{-na_n}$, $\forall\eps$. Let $w_\eps(z)=\sum_{n=0}^{m_\eps} \frac{c_{n,\eps}}{n!} z^n$, for each $\eps\in (0,1)$, where $\lim_{\eps\to 0}m_\eps=\infty$. Let $a\in\R^+$. As there exists $N\in\N$ such that for each $n\ge N$, $a_n\le a/2$, we find for each $k\in\N$,
\begin{multline*}
\sup_{\abs z \le \eps^a}\abs{D^k w_\eps(z)}\le \sup_{\abs z\le \eps^a}\sum_{n=k}^{m_\eps} \frac{\eps^{-n a_n}\abs[]z^{n-k}}{(n-k)!}
\le \eps^{-ka}\sum_{n < N} \eps^{(a-a_n)n} + \eps^{-ka}\sum_{n=N}^{m_\eps} \eps^{an/2}\\
\le \eps^{-ka}\sum_{n < N} \eps^{(a-a_n)n} + \frac{\eps^{-ka}}{1 - \eps^{a/2}} \le \eps^{-ka-M},
\end{multline*}
for some $M\in\N$ not depending on $k$ and for small $\eps$. As $a\in\R^+$ is arbitrary, $w\in\tGen(A)$ and $\limsup_{\abs\alpha\to\infty} \sqrt[\abs\alpha]{\sharpnorm{\partial^\alpha w(\tilde z)}}\le 1$, $\forall\tilde z\in A$. Further, $D^k w_\eps(0)=c_{k,\eps}$, for small $\eps$, hence $D^k w(0)=D^k u(0)$, for each $k\in\N$. Since $(\partial_x + i \partial_y) u(0)=(\partial_x + i \partial_y) w(0)=0$, also $\partial^\alpha w(0) = \partial^\alpha u(0)$, for each $\alpha\in \N^2$.\\
Let $f=u-w \in\tGen(A)$. Then $\partial^\alpha f(0)=0$, $\forall\alpha\in\N^2$ and $\limsup_{\abs\alpha\to\infty} \sqrt[\abs\alpha]{\sharpnorm{\partial^\alpha f(\tilde z)}}\le 1$, $\forall\tilde z\in A$. Let $\tilde z=[(z_\eps)_\eps]\in A$. Then there exists $a\in\R^+$ such that $\abs{z_\eps}\le\eps^a$, for small $\eps$. Since $[(\{tz_\eps:t\in [0,1])_\eps]=\{\tilde t \tilde z: \tilde t\in [0,1]\sptilde\}\subseteq A$ is internal and sharply bounded, lemma \ref{lemma_conv_radius_internal} implies that for each $\alpha\in\N^d$, $\sup_{t\in[0,1]}\abs{\partial^\alpha f_\eps(t z_\eps)}\le\eps^{-a\abs\alpha/2}$, for small $\eps$.
By the Taylor expansion up to order $m$ (in two real variables),
\[
\abs{f_\eps(z_\eps)}\le\nu_\eps + \abs {z_\eps}^{m+1}\sum_{\abs\alpha=m+1}\sup_{t\in [0,1]}\abs{\partial^{\alpha} f_\eps(t z_\eps)} \le\eps^{a(m+1)/2-1},
\]
for small $\eps$ and for some $(\nu_\eps)_\eps\in\Null_\R$. As $m\in\N$ arbitrary and $a>0$, $f(\tilde z)=0$, i.e., $u(\tilde z)= w(\tilde z)$. By proposition \ref{prop_GenA_pointvalues}, $(w_\eps)_\eps$ is a representative of $u$.\\
By lemma \ref{lemma_convergence_radius}, the convergence radius of the power series $\sum_n \frac{D^n u(0)}{n!} z^n$ is at least equal to $1$, hence the Taylor expansion of $u$ around $\tilde z_0$ converges for each $\tilde z\in A$. 
Fix $\tilde z\in A$. Let $a_m := \sum_{n=0}^m \frac{D^n u(0)}{n!} \tilde z^n\in\GenC$, $\forall m\in\N$. By the convergence of $(a_m)_{m\in\N}$ (in the sharp topology), one easily shows that there exist $\eps_m\in (0,1)$ with $\lim_{m\to\infty} \eps_m=0$ such that $b_\eps:= a_{m,\eps}$, for $\eps_{m+1}<\eps\le\eps_m$ defines a representative $(b_\eps)_\eps$ of $\lim_{m\to\infty} a_m$. If we choose well the representatives of $a_m$, $(b_\eps)_\eps=\big(\sum_{n=0}^{m_\eps} \frac{c_{n,\eps}}{n!} z^n\big)_\eps$ with $c_{n,\eps}$ and $m_\eps$ as before. Hence $\sum_{n=0}^\infty \frac{D^n u(0)}{n!} \tilde z^n= u(\tilde z)$.
\end{proof}
E.g., if $u\in\tGen^\infty(A)$, then $\limsup_{\abs\alpha\to\infty} \sqrt[\abs\alpha]{\sharpnorm{\partial^\alpha w(\tilde z)}}\le 1$, $\forall\tilde z\in A$.

\begin{prop}\label{prop_GenA_with_bounds_implies_GenH}
Let $r\in\R$, $r>0$. Let $\tilde z_0\in\GenC$. Let $\tilde z_0\in A\subseteq\{\tilde z\in\GenC: \sharpnorm{\tilde z - \tilde z_0} < r\}$ and let $A$ be open and star-shaped around $\tilde z_0$ (i.e., for each $\tilde z\in A$ and $\tilde t\in[0,1]\sptilde$, $\tilde t\tilde z + (1-\tilde t)\tilde z_0\in A$). Let $u\in\tGen(A)$. Suppose that for each $\tilde z\in A$, there exists $N\in\N$ such that for each $\alpha\in\N^2$, $\abs{\partial^\alpha u(\tilde z)}\le\caninf^{\abs\alpha \ln r - N}$ and let $\bar\partial u (\tilde z_0) = 0$. Then $u(\tilde z)=\sum_{n=0}^\infty \frac{D^n(\tilde z_0)}{n!}(\tilde z - \tilde z_0)^n$, for each $\tilde z\in A$ and $u$ has a representative consisting of polynomials in $\C[z]$ (in particular, $u\in\tGen_H(A)$).
\end{prop}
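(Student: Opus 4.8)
The plan is to deduce this from Proposition~\ref{prop_Geninfty_implies_GenH} by a generalized affine change of variables that rescales the sharp radius from $r$ to $1$. Set $\lambda:=\ln r\in\R$ and consider $\caninf^\lambda=[(\eps^\lambda)_\eps]\in\GenR$, which is moderate and invertible (inverse $\caninf^{-\lambda}$), so multiplication by it is a homeomorphism of $\GenC$ for the sharp topology. Since $\val(\caninf^\lambda\tilde x)=\lambda+\val(\tilde x)$, one has $\sharpnorm{\caninf^\lambda\tilde x}=e^{-\lambda}\sharpnorm{\tilde x}=\frac1r\sharpnorm{\tilde x}$ for every $\tilde x\in\GenC$. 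Hence $A':=\{\caninf^\lambda(\tilde z-\tilde z_0):\tilde z\in A\}$ is open (image of $A$ under an affine homeomorphism), is star-shaped around $0$ (the segment from $\tilde z_0$ to $\tilde z$ is carried onto the segment from $0$ to $\caninf^\lambda(\tilde z-\tilde z_0)$), and satisfies $A'\subseteq\{\tilde w\in\GenC:\sharpnorm{\tilde w}<1\}$ because $\sharpnorm{\caninf^\lambda(\tilde z-\tilde z_0)}=\frac1r\sharpnorm{\tilde z-\tilde z_0}<1$.

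Next I would transport $u$ to $A'$. Write $u=[(u_\eps)_\eps]$, $\tilde z_0=[(z_{0,\eps})_\eps]$, and put $\phi_\eps(w):=z_{0,\eps}+\eps^{-\lambda}w$, so that $\phi:=[(\phi_\eps)_\eps]\in\tGen(\GenC)$ is the affine map inverse to $\tilde z\mapsto\caninf^\lambda(\tilde z-\tilde z_0)$ and $\phi(A')\subseteq A$. By Proposition~\ref{prop_composition}, $v:=u\comp\phi=[(u_\eps\comp\phi_\eps)_\eps]\in\tGen(A')$. Since each $\phi_\eps$ is $\C$-affine with $\phi_\eps'=\eps^{-\lambda}$, the $\eps$-wise chain rule gives, at the level of point values, $\partial^\alpha v(\tilde w)=\caninf^{-\abs\alpha\lambda}\,\partial^\alpha u(\phi(\tilde w))$ for all $\tilde w\in A'$, $\alpha\in\N^2$; in particular $\bar\partial v(0)=\caninf^{-\lambda}\bar\partial u(\tilde z_0)=0$. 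The hypothesis $\abs{\partial^\alpha u(\tilde z)}\le\caninf^{\abs\alpha\ln r-N}$ says $\val(\partial^\alpha u(\tilde z))\ge\abs\alpha\lambda-N$, i.e.\ $\sharpnorm{\partial^\alpha u(\tilde z)}\le e^{N}r^{-\abs\alpha}$; combined with $\sharpnorm{\caninf^{-\abs\alpha\lambda}}=e^{\abs\alpha\lambda}=r^{\abs\alpha}$ this gives $\sharpnorm{\partial^\alpha v(\tilde w)}\le e^{N}$ for every $\alpha$, with $N=N(\tilde w)$. Hence $\limsup_{\abs\alpha\to\infty}\sqrt[\abs\alpha]{\sharpnorm{\partial^\alpha v(\tilde w)}}\le 1$ for each $\tilde w\in A'$.

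Thus Proposition~\ref{prop_Geninfty_implies_GenH} applies to $v$ on $A'$: it yields a representative $(v_\eps)_\eps$ of $v$ with $v_\eps\in\C[z]$ and $v(\tilde w)=\sum_{n=0}^\infty\frac{D^n v(0)}{n!}\tilde w^n$, convergent, for every $\tilde w\in A'$. Pulling back along $\tilde z\mapsto\caninf^\lambda(\tilde z-\tilde z_0)$, i.e.\ composing with $\phi_\eps^{-1}(z)=\eps^{\lambda}(z-z_{0,\eps})$ (again affine): $v_\eps\comp\phi_\eps^{-1}\in\C[z]$, and by Proposition~\ref{prop_composition} together with a point-value computation and Proposition~\ref{prop_GenA_pointvalues}(3) on the open set $A$ one gets $[(v_\eps\comp\phi_\eps^{-1})_\eps]=u$ in $\tGen(A)$. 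So $u$ has a representative consisting of polynomials in $\C[z]$, whence $\bar\partial u=0$ and $u\in\tGen_H(A)$. Finally, for $\tilde z\in A$, using $D^n v(0)=\caninf^{-n\lambda}D^n u(\tilde z_0)$,
\[
u(\tilde z)=v\big(\caninf^\lambda(\tilde z-\tilde z_0)\big)=\sum_{n=0}^\infty\frac{D^n v(0)}{n!}\caninf^{n\lambda}(\tilde z-\tilde z_0)^n=\sum_{n=0}^\infty\frac{D^n u(\tilde z_0)}{n!}(\tilde z-\tilde z_0)^n,
\]
and this series converges because the rescaled one does.

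The only delicate points are bookkeeping: checking that the affine substitution is independent of the chosen representatives and commutes with the point-value and derivative maps exactly as asserted (this is the content of Propositions~\ref{prop_composition} and \ref{prop_GenA_pointvalues}), and that it preserves openness and star-shapedness and turns the stated $\caninf$-bounds into the $\limsup\le 1$ hypothesis of Proposition~\ref{prop_Geninfty_implies_GenH}. No new analytic difficulty arises beyond that proposition.
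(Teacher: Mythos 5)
Your proof is correct and takes essentially the same route as the paper: translate $\tilde z_0$ to the origin, rescale by $\caninf^{\ln r}$ to map the domain into the unit sharp ball, observe $\partial^\alpha v(\tilde w)=\caninf^{-\abs\alpha\ln r}\partial^\alpha u(\phi(\tilde w))$, and invoke Proposition~\ref{prop_Geninfty_implies_GenH}. The only cosmetic difference is that the paper records the stronger conclusion $v\in\tGen^\infty(\caninf^{\ln r}A)$ and then invokes the remark after Proposition~\ref{prop_Geninfty_implies_GenH}, whereas you verify the $\limsup\le 1$ hypothesis directly; both are valid, and your write-up is a more detailed version of the same argument.
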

\begin{proof}
As in proposition \ref{prop_Geninfty_implies_GenH}, we may suppose that $\tilde z_0=0$. Let $u=[(u_\eps)_\eps]$ and let $v_\eps(z)=u_\eps(\eps^{-\ln r} z)$, for each $\eps$. Then $v=[(v_\eps)_\eps]\in \tGen(\caninf^{\ln r} A)$, with $\{0\}\in\caninf^{\ln r} A \subseteq \{\tilde z\in\GenC: \sharpnorm{\tilde z} < 1\}$ star-shaped around $0$. Further, $\partial^\alpha v(\tilde z)=\caninf^{-\abs\alpha \ln r}\partial^\alpha u(\tilde z)$, for each $\tilde z\in\caninf^{\ln r} A$ and $\alpha\in\N^2$. Hence $v\in\tGen^\infty(\caninf^{\ln r} A)$ and $\bar\partial v(0)=0$. The assertion follows by applying proposition \ref{prop_Geninfty_implies_GenH}.
\end{proof}

\begin{thm}\label{thm_holomorphic_charac}
Let $\tilde z_0\in\GenC$ and $r\in\R^+$. Let $A=\{\tilde z\in\GenC: \sharpnorm{\tilde z - \tilde z_0} < r\}$. The following are equivalent:
\begin{enumerate}
\item $u\in\tGen_H(A)$
\item $u\in\tGen(A)$, $\limsup_{\abs\alpha\to\infty}\sqrt[\abs\alpha]{\sharpnorm{\partial^\alpha u(\tilde z)}}\le 1/r$, $\forall \tilde z\in A$ and $\bar\partial u (\tilde z_0) = 0$
\item $u$: $A\to\GenC$: $u(\tilde z)=\sum_{n=0}^\infty a_n (\tilde z - \tilde z_0)^n$, for some $a_n\in\GenC$
\item $u\in\tGen(A)$ has a representative consisting of polynomials ($\in\C[z]$).
\end{enumerate}
\end{thm}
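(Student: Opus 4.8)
The plan is to prove the cycle $(4)\implies(1)\implies(2)\implies(3)\implies(4)$. The first step is immediate: if $u$ has a representative consisting of polynomials $u_\eps\in\C[z]$, each $u_\eps$ is entire, so $\bar\partial u_\eps=0$ for every $\eps$ and hence $\bar\partial u=0$ in $\tGen(A)$, i.e.\ $u\in\tGen_H(A)$.

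For $(1)\implies(2)$ I would first use $\bar\partial u=0$ on $A$ to write $\partial^\alpha u(\tilde z)=i^{\alpha_2}D^{\abs{\alpha}}u(\tilde z)$ for $\tilde z\in A$, $\alpha\in\N^2$, so that $\sharpnorm{\partial^\alpha u(\tilde z)}=\sharpnorm{D^{\abs{\alpha}}u(\tilde z)}$ and $\bar\partial u(\tilde z_0)=0$ (as $\tilde z_0\in A$); it then remains to bound $\limsup_k\sqrt[k]{\sharpnorm{D^k u(\tilde z)}}$ for each fixed $\tilde z\in A$. Pick a real $b>-\ln r$ and set $\delta:=\caninf^b\gg0$. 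By the ultrametric inequality, $\abs{\tilde\zeta-\tilde z}\le\delta$ forces $\sharpnorm{\tilde\zeta-\tilde z_0}\le\max(e^{-b},\sharpnorm{\tilde z-\tilde z_0})<r$, so the closed $\abs{\cdot}$-ball of radius $\delta$ around $\tilde z$ lies in $A$; its bounding sphere is internal, sharply bounded and inside $A$, so corollary \ref{cor_EModA_internal} gives $N\in\N$ with $\max_{\abs{\tilde\zeta-\tilde z}=\delta}\abs{u(\tilde\zeta)}\le\caninf^{-N}$. Corollary \ref{cor_Cauchy_formula} (centre $\tilde z$, radius $\delta$) then yields $\abs{D^k u(\tilde z)}\le k!\,\delta^{-k}\caninf^{-N}$, whence $\sharpnorm{D^k u(\tilde z)}\le e^{bk+N}$ and $\limsup_k\sqrt[k]{\sharpnorm{D^k u(\tilde z)}}\le e^{b}$. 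Letting $b\downarrow-\ln r$ gives the bound $1/r$, which is (2).

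For $(2)\implies(3)$ I would apply proposition \ref{prop_GenA_with_bounds_implies_GenH} on each strictly smaller ball $A_{r'}=\{\tilde z:\sharpnorm{\tilde z-\tilde z_0}<r'\}$ ($r'<r$), which is open and star-shaped around $\tilde z_0$: for $\tilde z\in A_{r'}$, choosing $\delta'$ with $0<\delta'<\ln r-\ln r'$, condition (2) gives $\val(\partial^\alpha u(\tilde z))\ge\abs{\alpha}(\ln r-\delta')>\abs{\alpha}\ln r'$ for $\abs{\alpha}$ large, so after enlarging $N$ to cover the finitely many small $\abs{\alpha}$ one gets $\abs{\partial^\alpha u(\tilde z)}\le\caninf^{\abs{\alpha}\ln r'-N}$ for all $\alpha$; with $\bar\partial u(\tilde z_0)=0$ the proposition yields $u(\tilde z)=\sum_n\frac{D^n u(\tilde z_0)}{n!}(\tilde z-\tilde z_0)^n$ on $A_{r'}$, and since $A=\bigcup_{r'<r}A_{r'}$ with coefficients independent of $r'$, (3) holds with $a_n=D^n u(\tilde z_0)/n!$. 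For $(3)\implies(4)$, lemma \ref{lemma_convergence_radius} forces the convergence radius $R=1/\limsup_n\sqrt[n]{\sharpnorm{a_n}}$ to satisfy $R\ge r$ — else, for a real $s$ with $R<s<r$, the point $\tilde z=\tilde z_0+\caninf^{-\ln s}\in A$ has invertible offset $\tilde z-\tilde z_0$ with $\sharpnorm{(\tilde z-\tilde z_0)^{-1}}=1/s<1/R$, at which the series diverges by the same lemma, a contradiction. Then, exactly as in the proof of proposition \ref{prop_Geninfty_implies_GenH}, $\val(a_n)\ge n\ln r-nc_n$ for some $c_n\to0$, so one picks representatives $(a_{n,\eps})_\eps$ with $\abs{a_{n,\eps}}\le\eps^{n\ln r-nc_n}$ and a net $m_\eps\to\infty$ such that $w_\eps(z):=\sum_{n=0}^{m_\eps}a_{n,\eps}(z-z_{0,\eps})^n$ has moderate bounds for $\sup\abs{D^k w_\eps}$ over $\abs{z-z_{0,\eps}}\le\eps^{-\ln r+1/k}$, for each $k$; since $A=\bigcup_k\{\tilde z:\abs{\tilde z-\tilde z_0}\le\caninf^{-\ln r+1/k}\}$, corollary \ref{cor_GenA_union_of_internal} shows $[(w_\eps)_\eps]\in\tGen(A)$ is a polynomial representative, and the diagonal choice of $m_\eps$ makes $[(w_\eps(z_\eps))_\eps]=\lim_m\sum_{n=0}^m a_n(\tilde z-\tilde z_0)^n=u(\tilde z)$ for each $\tilde z\in A$, so $u$ is the pointwise map of this $[(w_\eps)_\eps]$, which is (4).

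The main obstacle is the matching of norms and valuations. In $(1)\implies(2)$ one must move between the sharp-norm ball $A$ and the $\abs{\cdot}$-balls of corollary \ref{cor_Cauchy_formula} and then squeeze $e^{b}$ down to $1/r$ by letting $b\downarrow-\ln r$; and in $(2)\implies(3)$ the estimate $\limsup_{\abs{\alpha}\to\infty}\sqrt[\abs{\alpha}]{\sharpnorm{\partial^\alpha u(\tilde z)}}\le1/r$ is \emph{not} of the form demanded by proposition \ref{prop_GenA_with_bounds_implies_GenH} on $A$ itself (the loss $\abs{\alpha}\delta'$ cannot be absorbed into a fixed $N$), so one is forced onto the balls $A_{r'}$, $r'<r$, where the strict gap $\ln r-\ln r'>0$ is precisely what makes the conversion possible. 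One also has to keep track that every ball and sphere used is internal and sharply bounded, so that corollaries \ref{cor_EModA_internal}, \ref{cor_GenA_union_of_internal} and \ref{cor_Cauchy_formula} apply.
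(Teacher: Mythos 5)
Your proof is correct and follows essentially the same route as the paper: Cauchy estimates from corollary \ref{cor_Cauchy_formula} for $(1)\Rightarrow(2)$, the power-series proposition for $(2)\Rightarrow(3)$, lemma \ref{lemma_convergence_radius} plus an explicit polynomial net for $(3)\Rightarrow(4)$, and the trivial $(4)\Rightarrow(1)$. The only cosmetic deviation is that the paper first normalizes to $\tilde z_0=0$, $r=1$ (so that proposition \ref{prop_Geninfty_implies_GenH} applies directly in step $(2)\Rightarrow(3)$), whereas you keep $r$ general and compensate by invoking proposition \ref{prop_GenA_with_bounds_implies_GenH} on the shrunken balls $A_{r'}$, $r'<r$, to absorb the $\abs\alpha\delta'$ slack into the fixed-$N$ hypothesis — a valid workaround for the same estimate.
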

\begin{proof}
We may suppose that $\tilde z_0=0$ and $r=1$ (consider $w(\tilde z):=u(\tilde z_0 + \caninf^{-\ln r}\tilde z)$).\\
$(1)\Rightarrow(2)$: let $\tilde z\in A$ and $a\in\R$, $a>0$. Then $B(\tilde z, \caninf^a)\subseteq A$. By corollary \ref{cor_Cauchy_formula}, for each $k\in\N$, $\abs{D^k u(\tilde z)}\le k!\caninf^{-a k}\max_{\abs[]{\tilde \zeta - \tilde z}=\caninf^a}\abs[]{u(\tilde \zeta)}$. Hence $\limsup_{n\to\infty}\sqrt[n]{\sharpnorm{D^n u(\tilde z)}}\le e^{a}$, $\forall a>0$.\\
$(2)\Rightarrow(3)$: by proposition \ref{prop_Geninfty_implies_GenH}.\\
$(3)\Rightarrow(4)$: let $c\in\R$, $c>0$. By lemma \ref{lemma_convergence_radius}, $\limsup_{n\to\infty}\sqrt[n]{\sharpnorm{a_n}}\le 1$. Hence $\sharpnorm{a_n}\le e^{nc}$, for large $n$. Hence we can find $c_n\in\R$ with $c_n\to 0$ and representatives $(a_{n,\eps})_\eps$ of $a_n$ with $\abs{a_{n,\eps}}\le\eps^{-nc_n}$, $\forall\eps$. Let $v_\eps=\sum_{n=0}^{m_\eps} a_{n,\eps}z^n$, $\forall\eps\in(0,1)$, where $\lim_{\eps\to 0}m_\eps=\infty$. Let $c\in \R$, $c>0$. As there exists $N\in\N$ such that for each $n\ge N$, $c_n\le c/2$, we find for $k\in\N$,
\[\sup_{\abs{z}\le \eps^c}\abs{D^k v_\eps(z)}\le \sum_{n <N}\frac{n!\abs{a_{n,\eps}}\eps^{nc}}{(n-k)!}  + \sum_{n\ge N} \frac{n!\eps^{cn/2-kc}}{(n-k)!} \le \sum_{n <N}\frac{n!\abs{a_{n,\eps}}\eps^{nc}}{(n-k)!} + \frac{k!\eps^{-kc/2}}{(1- \eps^{c/2})^{k+1}},\]
for small $\eps$. As $c>0$ is arbitrary, $(v_\eps)_\eps\in\EMod(A)$ by corollary \ref{cor_EModA_internal}.\\
Let $v=[(v_\eps)_\eps]\in\tGen(A)$. Let $c\in\R$, $c>0$. Let $m\in\N$ sufficiently large. Similarly,
\[\sup_{\abs{z}\le \eps^c}\abs[\Big]{v_\eps(z)- \sum_{n\le m} a_{n,\eps} z^n}\le \sum_{n > m} \eps^{cn/2}\le \frac{\eps^{cm/2}}{1-\eps^{c/2}}\]
for small $\eps$. As $c>0$ is arbitrary, $v(\tilde z)= u(\tilde z)$, for each $\tilde z\in A$.\\
$(4)\Rightarrow(1)$: clear.
\end{proof}
The example $u(\tilde z)=\sum_{n=0}^\infty \caninf^{-\frac{n}{\ln n}} \tilde z^n$ shows that the conditions of the previous proposition may be fulfilled for $u\in\tGen(A)\setminus\tGen^\infty(A)$ (in the case $r=1$).

\begin{cor}
The following are equivalent:
\begin{enumerate}
\item $u\in\tGen_H(\GenC)$
\item $u\in\tGen(\GenC)$, $\limsup\limits_{\abs\alpha\to\infty}\sqrt[\abs\alpha]{\sharpnorm{\partial^\alpha u(\tilde z)}} = 0$, $\forall \tilde z\in \GenC$ and $\bar\partial u (\tilde z_0) = 0$, for some $\tilde z_0\in\GenC$
\item for each $\tilde z\in\GenC$, $u$: $\GenC\to\GenC$: $u(\tilde z)=\sum_{n=0}^\infty a_n\tilde z^n$, for some $a_n\in\GenC$
\item $u\in\tGen(\GenC)$ has a representative consisting of polynomials ($\in\C[z]$).
\end{enumerate}
\end{cor}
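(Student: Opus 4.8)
The plan is to derive all four equivalences from Theorem~\ref{thm_holomorphic_charac} by exhausting $\GenC$ with sharp balls. Fix $\tilde z_0\in\GenC$ and put $B_n:=\{\tilde z\in\GenC:\sharpnorm{\tilde z-\tilde z_0}<n\}$; this is a ball of exactly the kind treated in Theorem~\ref{thm_holomorphic_charac} (radius $r=n\in\R^+$). Since $\sharpnorm{\tilde z}<+\infty$ for every $\tilde z\in\GenC$, we have $\GenC=\bigcup_{n\in\N}B_n$; moreover, writing $(z_{0,\eps})_\eps$ for a representative of $\tilde z_0$, one has $B_n=\bigcup_{k\in\N}[(\{z\in\C:\abs{z-z_{0,\eps}}\le\eps^{-\ln n+1/k}\})_\eps]$, so $\GenC$ is a countable union of internal, sharply bounded sets and Corollary~\ref{cor_GenA_union_of_internal} applies to it. The crucial point is that restriction to $B_n$ reduces conditions (1), (2), (3) for $\GenC$ to the corresponding conditions of Theorem~\ref{thm_holomorphic_charac} for the ball $B_n$ --- each of these holds on $\GenC$ iff it holds on every $B_n$, which is immediate from the definitions of $\tGen(\GenC)$, $\tGen_H(\GenC)$ and of generalized point values, with no sheaf-theoretic gluing needed; condition (4) I would obtain directly. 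So the scheme is $(1)\Rightarrow(2)\Rightarrow(1)$, $(1)\Rightarrow(3)$, $(3)\Rightarrow(4)$, $(4)\Rightarrow(1)$.

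For $(1)\Rightarrow(2)$: each $\restr{u}{B_n}$ lies in $\tGen_H(B_n)$, so Theorem~\ref{thm_holomorphic_charac} gives $\limsup_{\abs\alpha\to\infty}\sqrt[\abs\alpha]{\sharpnorm{\partial^\alpha u(\tilde z)}}\le1/n$ for $\tilde z\in B_n$; since each $\tilde z\in\GenC$ lies in $B_n$ for all large $n$, this $\limsup$ is $0$, and $\bar\partial u(\tilde z_0)=0$ since $u\in\tGen_H(\GenC)$. For $(2)\Rightarrow(1)$: for every $n$, $\restr{u}{B_n}$ satisfies condition (2) of Theorem~\ref{thm_holomorphic_charac} for $B_n$ (here the centre $\tilde z_0$ is precisely the point at which $\bar\partial u$ is assumed to vanish), hence $\restr{u}{B_n}\in\tGen_H(B_n)$; the union gives $u\in\tGen_H(\GenC)$. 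For $(1)\Rightarrow(3)$: run the same exhaustion with $\tilde z_0=0$; then $\restr{u}{B_n}\in\tGen_H(B_n)$, so by Theorem~\ref{thm_holomorphic_charac} and Proposition~\ref{prop_Geninfty_implies_GenH}, $u(\tilde z)=\sum_{k\ge0}\frac{D^k u(0)}{k!}\tilde z^k$ on $B_n$, with coefficients $\frac1{k!}D^k u(0)$ independent of $n$; since $\GenC=\bigcup_nB_n$ this identity holds on all of $\GenC$, which is (3). The implication $(4)\Rightarrow(1)$ is clear. For $(3)\Rightarrow(4)$: by Lemma~\ref{lemma_convergence_radius}, convergence of $\sum_n a_n\tilde z^n$ at every $\tilde z\in\GenC$ forces the convergence radius to be $+\infty$, so $\limsup_k\sqrt[k]{\sharpnorm{a_k}}=0$; then, just as in the proof of $(3)\Rightarrow(4)$ in Theorem~\ref{thm_holomorphic_charac}, choose representatives $(a_{k,\eps})_\eps$ of $a_k$ and numbers $d_k\to+\infty$ with $\abs{a_{k,\eps}}\le\eps^{k d_k}$ for all $\eps$, set $v_\eps=\sum_{n=0}^{m_\eps}a_{n,\eps}z^n$ with $m_\eps\to\infty$, and verify via Corollary~\ref{cor_GenA_union_of_internal} that $v:=[(v_\eps)_\eps]\in\tGen(\GenC)$ and $v(\tilde z)=\sum_n a_n\tilde z^n=u(\tilde z)$ for each $\tilde z$; Proposition~\ref{prop_GenA_pointvalues}(3) then gives $u=v$, which has a representative consisting of polynomials.

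The only step that is not a verbatim reading of Theorem~\ref{thm_holomorphic_charac} on a ball is the moderateness estimate in $(3)\Rightarrow(4)$, and this is the point I expect to require the most care. Since $\GenC$ is exhausted by the \emph{unbounded} family of balls $B_n$, one needs $\sup_{\abs z\le\eps^{c}}\abs{D^k v_\eps(z)}$ to be moderate for \emph{every} $c\in\R$, whereas in the theorem only $c>0$ occurs; and for $c\le0$ the factor $\eps^{cn}$ in the natural bound $\sum_{n\le m_\eps}\frac{n!}{(n-k)!}\abs{a_{n,\eps}}\,\eps^{c(n-k)}$ grows without bound as $\eps\to0$. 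This is exactly where the hypothesis of infinite convergence radius is used: it forces $d_n=\val(a_n)/n\to+\infty$, so for $n$ beyond a threshold depending only on $c$ one has $n(c+d_n)\ge n$, whence $\sum_n\frac{n!}{(n-k)!}\eps^{n(c+d_n)}\le\sum_n\frac{n!}{(n-k)!}\eps^n$ is moderate regardless of how fast $m_\eps$ grows; the same estimate applied to $v_\eps-\sum_{n\le M}a_{n,\eps}z^n$ yields $v(\tilde z)=u(\tilde z)$ pointwise. All remaining work is Theorem~\ref{thm_holomorphic_charac} applied on each $B_n$ together with the trivial passage to the union.
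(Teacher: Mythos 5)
Your proof is correct and follows essentially the same route the paper intends: the paper's own proof is a one-liner stating that $(3)\Rightarrow(4)$ is "as in the proof of Theorem~\ref{thm_holomorphic_charac}, now with $c_n\to-\infty$" (your $d_n\to+\infty$) and that the remaining implications "follow directly" from the theorem, which you realize concretely by exhausting $\GenC$ with the balls $B_n$ and restricting. Your elaboration of the moderateness estimate for $c\le 0$ in $(3)\Rightarrow(4)$ and the care you take to center the balls at the given $\tilde z_0$ for $(2)\Rightarrow(1)$ (so the star-shapedness hypothesis of Proposition~\ref{prop_Geninfty_implies_GenH} is available) but at $0$ for $(1)\Rightarrow(3)$ are exactly the details the paper leaves implicit.
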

\begin{proof}
$(3)\implies(4)$: as in the proof of proposition \ref{thm_holomorphic_charac}, now with $c_n\to -\infty$.\\
The other implications follow directly from proposition \ref{thm_holomorphic_charac}.
\end{proof}

\begin{ex}
Let $u(\tilde z)=\sum_{n=0}^\infty\frac{\caninf^{n^2}}{n!}\tilde z^n$. Then $u\in \Gen_H(\GenC)\setminus\GenC[z]$ is the unique element of  $\Gen_H(\GenC)$ with $D^ku(0)=\caninf^{k^2}$, for each $k\in\N$.
\end{ex}

\begin{df}
Let $A\subseteq \GenC$. Then $\tilde z_0$ is a \defstyle{strict accumulation point} of $A$ if for each $r\in\R^+$, there exists $\tilde z\in A$ such that $\tilde z - \tilde z_0$ is invertible and $\sharpnorm{\tilde z - \tilde z_0} < r$. 
\end{df}
\begin{thm}\label{thm_acc_point_of_zeroes}
Let $\tilde z_0\in\GenC$ and $r\in\R^+$. Let $A=\{\tilde z\in\GenC: \sharpnorm{\tilde z - \tilde z_0} < r\}$ and $u\in\tGen_H(A)$. Then the following are equivalent:
\begin{enumerate}
\item $\tilde z_0$ is a strict accumulation point of generalized zeroes of $u$
\item $D^k u(\tilde z_0)=0$, for each $k\in\N$
\item $u=0$ (in $\tGen(A)$).
\end{enumerate}
\end{thm}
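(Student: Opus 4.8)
The plan is to establish the three equivalences through the cycle $(1)\Rightarrow(2)\Rightarrow(3)\Rightarrow(1)$; only $(1)\Rightarrow(2)$ has real content. As in the proof of theorem~\ref{thm_holomorphic_charac} one may assume $\tilde z_0=0$ and $r=1$ by passing to $w(\tilde z):=u(\tilde z_0+\caninf^{-\ln r}\tilde z)$, though the argument below does not strictly need this. Throughout I will use that for open $A\subseteq\GenC$ and $v=[(v_\eps)_\eps]\in\tGen(A)$ the point-value map $\tilde z\mapsto v(\tilde z)$ is sharply continuous: applying corollary~\ref{cor_EModA_internal} to the singleton $\{\tilde z_0\}$ with $\tilde z_0=[(z_{0,\eps})_\eps]$ gives $N\in\N$ with $\sup_{\abs{z-z_{0,\eps}}\le\eps^N}\abs{\nabla v_\eps(z)}\le\eps^{-N}$ for small $\eps$, whence $\sharpnorm{v(\tilde z)-v(\tilde z_0)}\le e^{N}\sharpnorm{\tilde z-\tilde z_0}$ as soon as $\sharpnorm{\tilde z-\tilde z_0}<e^{-N}$ (this is the computation already in the proof of proposition~\ref{prop_GenA_pointvalues}(1)).

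For $(1)\Rightarrow(2)$, put $a_n:=D^n u(\tilde z_0)/n!\in\GenC$. By theorem~\ref{thm_holomorphic_charac} and lemma~\ref{lemma_convergence_radius}, $u(\tilde z)=\sum_{n\ge0}a_n(\tilde z-\tilde z_0)^n$ for all $\tilde z\in A$, and the convergence radius $R$ of the series satisfies $R\ge r$. I show $a_k=0$ for all $k$ by induction on $k$. Assume $a_0=\dots=a_{k-1}=0$. The shifted series $\sum_{m\ge0}a_{k+m}(\tilde z-\tilde z_0)^m$ has the same convergence radius, so it converges on $A$ and, by theorem~\ref{thm_holomorphic_charac}, its pointwise sum $v_k$ lies in $\tGen_H(A)$; by continuity of multiplication by the fixed element $(\tilde z-\tilde z_0)^k$ in the sharp topology and $a_0=\dots=a_{k-1}=0$, one obtains $u(\tilde z)=(\tilde z-\tilde z_0)^k\,v_k(\tilde z)$ for each $\tilde z\in A$. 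Now fix $j\in\N$ and apply hypothesis (1) with radius $\min(1/j,r/2)$: there is $\tilde z^{(j)}\in A$ with $w_j:=\tilde z^{(j)}-\tilde z_0$ invertible, $\sharpnorm{w_j}<1/j$ and $u(\tilde z^{(j)})=0$. Then $w_j^k$ is invertible, so multiplying $0=u(\tilde z^{(j)})=w_j^k\,v_k(\tilde z^{(j)})$ by $(w_j^{-1})^{k}$ forces $v_k(\tilde z^{(j)})=0$. Since $\tilde z^{(j)}\to\tilde z_0$ and $v_k$ has sharply continuous point values, $a_k=v_k(\tilde z_0)=\lim_{j\to\infty}v_k(\tilde z^{(j)})=0$, closing the induction; hence $D^k u(\tilde z_0)=0$ for all $k$.

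The other two implications are immediate. For $(2)\Rightarrow(3)$: if all $D^k u(\tilde z_0)=0$ then all $a_n=0$, hence $u(\tilde z)=0$ for each $\tilde z\in A$, so $u=0$ in $\tGen(A)$ by proposition~\ref{prop_GenA_pointvalues}(3) as $A$ is open. For $(3)\Rightarrow(1)$: $u=0$ gives $u(\tilde z)=0$ for all $\tilde z\in A$; given $\rho\in\R^+$, choose $m\in\N$ with $e^{-m}<\min(\rho,r)$, so that $\tilde z_0+\caninf^m\in A$, its difference $\caninf^m$ with $\tilde z_0$ is invertible, $\sharpnorm{\caninf^m}=e^{-m}<\rho$, and $u(\tilde z_0+\caninf^m)=0$; hence $\tilde z_0$ is a strict accumulation point of zeroes of $u$.

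The main obstacle — and the only place where the word \emph{strict} is essential — is the step $0=w_j^k\,v_k(\tilde z^{(j)})\Rightarrow v_k(\tilde z^{(j)})=0$ in $(1)\Rightarrow(2)$: because $\GenC$ has zero divisors, dividing $u$ by $(\tilde z-\tilde z_0)^k$ is legitimate only when $\tilde z-\tilde z_0$ is invertible, so the classical ``order of a zero'' argument cannot be run at ordinary accumulation points. This is precisely why the corresponding unicity statement fails for traditional generalized holomorphic functions on an open $\Omega\subseteq\C$, where most of $\widetilde\Omega_c$ sits on the boundary of the disc of convergence.
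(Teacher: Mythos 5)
Your proof is correct and follows essentially the same route as the paper's: for $(1)\Rightarrow(2)$ the paper also uses theorem~\ref{thm_holomorphic_charac} to write $u$ as a power series, factors out $(\tilde z-\tilde z_0)$, observes (without spelling it out) that at a strict accumulation point the invertibility of $\tilde z-\tilde z_0$ lets one cancel the factor, notes the quotient $u_1$ again lies in $\tGen_H(A)$ by lemma~\ref{lemma_convergence_radius} plus theorem~\ref{thm_holomorphic_charac}, and then proceeds by induction; your version just factors out $(\tilde z-\tilde z_0)^k$ in one go and makes the cancellation step and the sharp-continuity computation explicit. The only cosmetic difference is that the paper peels off one power of $(\tilde z-\tilde z_0)$ per inductive step, whereas you peel off $k$ powers at once — the substance is identical.
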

\begin{proof}
$(1)\implies(2)$: by the sharp continuity of $u$, $u(\tilde z_0) = 0$. By theorem \ref{thm_holomorphic_charac}, $u(\tilde z) = (\tilde z - \tilde z_0)\sum_{n=1}^\infty a_n (\tilde z - \tilde z_0)^{n-1}$, for each $\tilde z\in A$ (with $a_n=D^n u(\tilde z_0)/n!$, $\forall n$). Let $u_1(\tilde z) := \sum_{n=1}^\infty a_n (\tilde z - \tilde z_0)^{n-1}$. By lemma \ref{lemma_convergence_radius}, this series converges for each $\tilde z\in A$, hence theorem \ref{thm_holomorphic_charac} implies that $u_1\in\tGen_H(A)$. Then $\tilde z_0$ is also a strict accumulation point of generalized zeroes of $u_1$. By the sharp continuity of $u_1$, $u_1(\tilde z_0) = a_1 = u'(\tilde z_0) = 0$. Inductively, one finds $D^k u (\tilde z_0) = 0$, for each $k\in\N$.\\
$(2)\Rightarrow (3)$: by theorem \ref{thm_holomorphic_charac}, $u(\tilde z) = \sum_{n=0}^\infty \frac{D^n u(\tilde z_0)}{n!} (\tilde z - \tilde z_0)^n = 0$, for each $\tilde z\in A$.\\
$(3)\implies(1)$: trivial.
\end{proof}
The conditions in the previous theorem, however, do not imply that $u=0$ on the `boundary of the convergence disc', as is shown by the example (cf.\ \cite{KS06}) $u_\eps(z)=z^{\lfloor\ln (\eps^{-1})\rfloor}$, for each $\eps\in(0,1)$, defining $u\in\Gen_H(\C)$ with $u(\tilde z)=0$ iff $\tilde z\approx 0$.

\begin{prop}[Analytic representatives]\label{prop_analytic_repres}
Let $A$ be a sharply bounded, internal subset of $\GenC$ with a sharply bounded representative $(A_\eps)_\eps$. Let $B\subseteq\GenC$ be an open set that contains an internal sharp neighbourhood of $A$, and $u\in\tGen_H(B)$.
Then $u$ has a representative $(u_\eps)_\eps$ with $u_\eps$ analytic on $A_\eps$, for each $\eps$.
\end{prop}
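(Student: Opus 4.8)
The plan is to correct an arbitrary representative $(v_\eps)_\eps$ of $u$ by a negligible net: solve a localized inhomogeneous Cauchy--Riemann equation $\bar\partial f_\eps=g_\eps$, where $g_\eps$ is a cut-off of $\bar\partial v_\eps$ near $A_\eps$, and set $u_\eps:=v_\eps-f_\eps$, so that $\bar\partial u_\eps=0$ on a neighbourhood of $A_\eps$ while $(f_\eps)_\eps$ remains negligible.

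First I would fix a good internal neighbourhood. Since $B$ contains an internal sharp neighbourhood of $A$ and $A$ is sharply bounded, intersecting that neighbourhood with a sufficiently large internal ball $\{\tilde z\in\GenC:\abs{\tilde z}\le\caninf^{-M}\}$ (chosen so that the ball is itself a sharp neighbourhood of $A$) yields an internal, \emph{sharply bounded} sharp neighbourhood $B'$ of $A$ with $B'\subseteq B$. By Lemma \ref{lemma_internal_nbd}, for a suitable $m_0\in\N$ we may pick a representative $(B'_\eps)_\eps$ of $B'$ with $A_\eps+\eps^{m_0}\subseteq B'_\eps$ for all $\eps$. Now $\bar\partial u=0$ in $\tGen(B)$, hence in $\tGen(B')$ (as $\Null(B)\subseteq\Null(B')$), so $(\bar\partial v_\eps)_\eps\in\Null(B')$; applying Proposition \ref{prop_NullA_internal} to the internal sharply bounded set $B'$ and using $A_\eps+\eps^{m_0}\subseteq B'_\eps$ gives, for every $\alpha\in\N^2$, that $\big(\sup_{z\in A_\eps+\eps^{m_0}}\abs{\partial^\alpha\bar\partial v_\eps(z)}\big)_\eps\in\Null_\R$.

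Then comes the Cauchy--Pompeiu correction. Fix $\eps_0\in(0,1)$ with $A_\eps\subseteq\overline B(0,\eps^{-M})$ for $\eps\le\eps_0$. For $\eps\le\eps_0$, as in the proof of Proposition \ref{prop_sheaf} take $\chi_\eps\in\Cnt[\infty](\C)$ with $\chi_\eps=1$ on $A_\eps+\tfrac13\eps^{m_0}$, $\supp\chi_\eps\subseteq A_\eps+\tfrac23\eps^{m_0}$ and $\sup_\C\abs{\partial^\alpha\chi_\eps}\le C_\alpha\eps^{-m_0\abs\alpha}$ (convolve the characteristic function of $A_\eps+\tfrac12\eps^{m_0}$ with a scaled mollifier). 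Then $g_\eps:=\chi_\eps\cdot\bar\partial v_\eps\in\Cnt[\infty](\C)$ has compact support in $\overline B(0,\eps^{-M}+1)$, and by the Leibniz rule and the estimate above $\big(\sup_\C\abs{\partial^\alpha g_\eps}\big)_\eps\in\Null_\R$ for every $\alpha$. Let $f_\eps:=g_\eps\conv\tfrac{1}{\pi z}$, the Cauchy transform of $g_\eps$; then $f_\eps\in\Cnt[\infty](\C)$ and $\bar\partial f_\eps=g_\eps$ on $\C$, since $\bar\partial\tfrac{1}{\pi z}=\delta_0$ as distributions. Splitting the convolution integral over $\abs{\zeta-z}\le1$ and $\abs{\zeta-z}>1$ and using that $\supp g_\eps$ lies in a ball of moderate radius, one obtains $\sup_\C\abs{\partial^\alpha f_\eps}\le(\text{moderate})\cdot\sup_\C\abs{\partial^\alpha g_\eps}$, so $(\partial^\alpha f_\eps)_\eps$ is negligible uniformly on $\C$ for every $\alpha$. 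Put $u_\eps:=v_\eps-f_\eps$ for $\eps\le\eps_0$ and $u_\eps:=0$ for $\eps>\eps_0$ (where $A_\eps$ need not even be bounded); this choice for large $\eps$ does not change the generalized function.

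It remains to verify both conclusions. Since $(f_\eps)_\eps$, extended by $0$, is negligible uniformly on $\C$, it lies in $\Null(B)$, so $(u_\eps)_\eps=(v_\eps)_\eps-(f_\eps)_\eps\in\EMod(B)$ represents $u$. For $\eps\le\eps_0$, on the open set $A_\eps+\tfrac13\eps^{m_0}\supseteq A_\eps$ we have $\chi_\eps\equiv1$, hence $\bar\partial u_\eps=\bar\partial v_\eps-g_\eps=(1-\chi_\eps)\bar\partial v_\eps=0$ there; thus $u_\eps$ is holomorphic, and so analytic, on a neighbourhood of $A_\eps$, for every $\eps$. The only real use of the hypotheses is in the second paragraph: obtaining $\bar\partial v_\eps$ together with all its derivatives negligible on the \emph{infinitesimally fattened} set $A_\eps+\eps^{m_0}$, so that the localized $\bar\partial$-problem has a negligible solution; this is exactly what Lemma \ref{lemma_internal_nbd} and Proposition \ref{prop_NullA_internal} provide, and everything afterwards is the classical Cauchy--Pompeiu formula plus routine moderateness/negligibility bookkeeping.
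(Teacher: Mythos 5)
Your proof is correct, and it takes a route that is closely related to but genuinely distinct from the paper's. The paper pastes a boundary Cauchy integral onto the given representative: it fattens $A_\eps$ to a polygonal neighbourhood $\omega_\eps$, sets $v_\eps(z):=u_\eps(z)(1-\chi_\eps(z))+\frac{\chi_\eps(z)}{2\pi i}\oint_{\partial\omega_\eps}\frac{u_\eps(\zeta)}{\zeta-z}\,d\zeta$ (cut-off applied in the variable $z$), and shows $v_\eps-u_\eps$ is negligible via the Cauchy--Green/Pompeiu formula, which expresses the difference as the area integral $\iint_{\omega_\eps}\frac{\bar\partial u_\eps(\zeta)}{\zeta-z}\,d\lambda(\zeta)$. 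You instead cut off $\bar\partial v_\eps$ in the variable $\zeta$, producing a compactly supported negligible datum $g_\eps$, and solve the inhomogeneous $\bar\partial$-equation directly via the Cauchy transform $g_\eps\conv\frac{1}{\pi z}$, then subtract; the two arguments are dual faces of the fact that $\frac{1}{\pi z}$ is the fundamental solution of $\bar\partial$. Your version avoids the polygon $\partial\omega_\eps$ altogether and produces cleaner global sup-norm estimates for the correcting term (only the area of $\supp g_\eps$ and $\int_{\abs w\le 1}\frac{d\lambda(w)}{\abs w}$ enter, both moderate), at the slight cost of having to check smoothness of the Cauchy transform rather than appealing to analyticity of a boundary integral. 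The cut-off you need (with $\sup_{\C}\abs{\partial^\alpha\chi_\eps}\le C_\alpha\eps^{-m_0\abs\alpha}$) is exactly the one built by mollification in the proof of Proposition~\ref{prop_sheaf}, and the passage from $u\in\tGen_H(B)$ to the uniform negligibility of all derivatives of $\bar\partial v_\eps$ on $A_\eps+\eps^{m_0}$ is precisely what Lemma~\ref{lemma_internal_nbd} and Proposition~\ref{prop_NullA_internal} give, so your bookkeeping goes through.
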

\begin{proof}
Let $m\in\N$ such that $u\in\tGen_H(\{\tilde z\in\GenC: (\exists \tilde\zeta\in A) (\abs[]{\tilde z-\tilde \zeta}\le \caninf^m)\})$ (such $m$ exists by lemma \ref{lemma_internal_nbd}). For each $\eps\in (0,1)$, $A_\eps + \frac{\eps^m}{2}\subset\C$ is compact. Hence we can cover $A_\eps + \frac{\eps^m}{2}$ by an open set $\omega_\eps$ consisting of a finite union of open squares of diameter $\eps^m/2$. Thus $\partial \omega_\eps$ is a polygon.
Given a representative $(u_\eps)_\eps$ of $u$ (with $u_\eps\in\Cnt[\infty](\C)$, for each $\eps$), we can define $v_\eps(z) := u_\eps(z)(1-\chi_\eps(z)) + \frac{\chi_\eps(z)}{2\pi i}\int_{\partial \omega_\eps} \frac{u_\eps(\zeta)}{\zeta - z}\,d\zeta$, where $\chi_\eps\in\Cnt[\infty](\C)$ with $\chi_\eps(z)=1$, for each $z\in A_\eps + \frac{\eps^m}{4}$ and $\chi_\eps(z)=0$, for each $z\in\C\setminus (A_\eps + \frac{\eps^m}{3})$, and $(\chi_\eps)_\eps\in\EMod(\C)$. Then $v_\eps$ is analytic on $A_\eps$. Further, by the Cauchy-Green theorem,
\[
\frac{\chi_\eps(z)}{2\pi i}\Big(\int_{\partial \omega_\eps} \frac{u_\eps(\zeta)}{\zeta - z}\,d\zeta - \int_{\partial B(z,\eps^{m+1})} \frac{u_\eps(\zeta)}{\zeta - z}\,d\zeta\Big) = \frac{\chi_\eps(z)}{\pi}\iint_{\omega_\eps\setminus B(z,\eps^{m+1})} \frac{\bar\partial u_\eps(\zeta)}{\zeta - z}\,dx\,dy
\]
defines a negligible net, since
\[
\sup_{z\in\C}\abs[\Big]{\partial^\alpha\Big(\chi_\eps(z)\iint_{\omega_\eps\setminus B(z,\eps^{m+1})} \frac{\bar\partial u_\eps(\zeta)}{\zeta - z}\,dx\,dy\Big)}\le \eps^{-M_\alpha}\meas(\omega_\eps)\sup_{z\in\omega_\eps}\abs{\bar\partial u_\eps(z)},
\]
which is negligible by proposition \ref{prop_NullA_internal} and the fact that $u\in\tGen_H([(\omega_\eps)_\eps])$. Also,
\begin{multline*}
\sup_{z\in \C}\abs{\partial^\alpha \Big(\frac{\chi_\eps(z)}{2\pi i}\int_{\partial B(z,\eps^{m+1})} \frac{u_\eps(\zeta)}{\zeta - z}\,d\zeta\Big)}\le\\ \eps^{-M_\alpha} \sup_{z\in A_\eps + \frac{\eps^{m}}{3},\,k\le\abs\alpha+1} \abs[\Big]{\int_{\partial B(z,\eps^{m+1})} \frac{u_\eps(\zeta)}{(\zeta - z)^k}\,d\zeta}
\le 2\pi \eps^{-M_\alpha -(m+1)\abs\alpha} \sup_{z\in A_\eps + \eps^m}\abs{u_\eps(z)},
\end{multline*}
hence $(v_\eps)_\eps\in \EMod(B)$. Again by the Cauchy-Green theorem,
\begin{multline*}
\sup_{z\in \C}\abs{u_\eps - v_\eps} =
\sup_{z\in \C}\abs[\Big]{\chi_\eps(z)\Big(u_\eps(z)-\frac{1}{2\pi i}\int_{\partial \omega_\eps} \frac{u_\eps(\zeta)}{\zeta - z}\,d\zeta\Big)}\\
\le \sup_{z\in A_\eps + \frac{\eps^{m}}{3}} \abs[\Big]{ \iint_{\omega_\eps} \frac{\bar\partial u_\eps(\zeta)}{\zeta - z}\,dx\,dy}
\le (2\pi + \meas(\omega_\eps))\sup_{z\in \omega_\eps}\abs{\bar\partial u_\eps(z)},
\end{multline*}
hence $u=[(v_\eps)_\eps]$ by proposition \ref{prop_GenA_pointvalues}(3).
\end{proof}


\begin{thebibliography}{10}
\bibitem{Aragona85}
J.\ Aragona,
{\em On the $\bar\partial$ Neumann problem for generalized functions}, J.\ Math.\ Anal.\ Appl.\ 110, 179--199, 1985.

\bibitem{AFJ05}
J.~Aragona, R.~Fernandez, S.~O.\ Juriaans,
{\em A discontinuous Colombeau differential calculus},
Monatsh.\ Math.\ 144, 13--29, 2005.

\bibitem{AFJO08}
J.~Aragona, R.~Fernandez, S.~O.\ Juriaans,
{\em Differential calculus and integration of generalized functions over membranes},
preprint {\tt arXiv:0809.4039}.

\bibitem{B90}
H.~Biagioni, {\em A Nonlinear Theory of Generalized Functions},
Lec.\ Notes Math.\ 1421, Springer, 1990.

\bibitem{Colombeau84}
J.-F.~Colombeau,
{\em New generalized functions and multiplication of distributions},
North-Holland, Amsterdam, 1984.

\bibitem{Colombeau85}
J.-F.~Colombeau,
{\em Elementary introduction to new generalized functions},
North-Holland, Amsterdam, 1985.

\bibitem{Garetto2005}
C.~Garetto,
{\it Topological structures in Colombeau algebras: topological $\widetilde\C$-modules and duality theory},
Acta Appl.~Math., 88(1), 81--123, 2005.

\bibitem{GKOS}
M.~Grosser, M.~Kunzinger, M.~Oberguggenberger, R.~Steinbauer,
{\it Geometric theory of generalized functions with applications to general relativity},
Kluwer 2001.

\bibitem{KS06}
A.~Khelif, D.~Scarpalezos,
{\em Zeros of generalized holomorphic functions},
Monatsh.\ Math.\ 149, 323--335, 2006.

\bibitem{OV_internal}
M.\ Oberguggenberger, H.\ Vernaeve,
{\em Internal sets and internal functions in Colombeau theory},
J.\ Math.\ Anal.\ Appl.\ 341, 649--659, 2008.

\bibitem{PSV06}
S.~Pilipovi\'c, D.~Scarpalezos, V.~Valmorin,
{\em Real analytic generalized functions},
Monatsh.\ Math., to appear.

\bibitem{S92}
D.~Scarpal\'{e}zos,
{\em Topologies dans les espaces de nouvelles fonctions g{\'e}n{\'e}ralis{\'e}es de Colombeau. $\GenC$-modules topologiques},
Universit\'e Paris 7, 1992.

\bibitem{HV_Banach}
H.\ Vernaeve,
Banach $\GenC$-algebras. Preprint, 2008.

\bibitem{HV_pointchar}
H.\ Vernaeve,
{\em Pointwise characterizations in generalized function algebras}, Monatsh.\ Math., to appear.
\end{thebibliography}
\end{document}